\newcommand*{\colorboxed}{}
\def\colorboxed#1#{%
	\colorboxedAux{#1}%
}
\newcommand*{\colorboxedAux}[3]{
	\begingroup
	\colorlet{cb@saved}{.}%
	\color#1{#2}%
	\boxed{%
		\color{cb@saved}%
		#3%
	}%
	\endgroup
}
\newtheorem{remark}{Remark}
\newtheorem{proposition}{Proposition}
\newtheorem{lemma}{Lemma}
\newtheorem{theorem}{Theorem}
\newtheorem{corollary}{Corollary}
\newcommand{\mb}{\mathbb}
\newcommand{\mc}{\mathcal}
\newcommand{\bs}{\boldsymbol}
\DeclareMathOperator*{\argmin}{arg\,min}
\newcommand{\ol}[1]{\overline{#1}}
\newcommand{\wt}[1]{\widetilde{#1}}
\newcommand{\wh}[1]{\widehat{#1}}
\crefname{appendix}{Appendix}{Appendices} 
\crefname{figure*}{figure*}{Figures} 
\crefname{equation}{}{} 
\crefname{claim}{Claim}{Claims}
\crefname{example}{Example}{Examples}
\crefname{remark}{Remark}{Remarks}
\crefname{assumption}{Assumption}{Assumptions}
\crefname{fact}{Fact}{Facts}
\crefname{proposition}{Proposition}{Propositions}
\crefname{corollary}{Corollary}{Corollaries}
\crefname{lemma}{Lemma}{Lemmas}
\crefname{theorem}{Theorem}{Theorems}
\crefname{definition}{Definition}{Definitions}
\newcommand{\Ni}{{\scriptscriptstyle \mathcal N_i \scriptstyle}}
\newcommand{\oNi}{{\scriptscriptstyle \overline{\mathcal N}_i \scriptstyle}}
\newcommand{\oNj}{{\scriptscriptstyle \overline{\mathcal N}_j \scriptstyle}}
\newcommand{\Mu}{{\scriptscriptstyle \mathcal M \scriptstyle}}
\newcommand{\AT}{{\scriptscriptstyle \mc{AT} \scriptstyle}}
\newcommand{\FS}{{\scriptscriptstyle \mc{FS} \scriptstyle}}
\newcommand{\PSCs}{Problem~\eqref{Semi-Centralized}\xspace}
\newcommand{\PDs}{Problem~\eqref{Decentralized}\xspace}
\title{A Robust Optimization Approach to Network Control\\ Using Local Information Exchange}
\author[1]{Georgios Darivianakis}
\author[2]{Angelos Georghiou}
\author[3]{Soroosh Shafiee}
\author[4]{John Lygeros}
\affil[1]{\small \textit{ENEXAN, Zurich, Switzerland}}
\affil[2]{\small \textit{Department of Business and Public Administration, University of Cyprus, Cyprus}}
\affil[3]{\small \textit{Operations Research and Information Engineering, Cornell University, United States}}
\affil[4]{\small \textit{Automatic Control Laboratory, ETH Zurich, Switzerland}}
\date{}
\begin{document}
\maketitle
\begin{abstract}
	Designing policies for a network of agents is typically  done  by  formulating  an  optimization  problem  where  each agent has access to state measurements of all the other agents in  the  network. Such  policy  designs  with  \emph{centralized  information  exchange}  result  in  optimization  problems  that are  typically  hard  to  solve,  require  establishing  substantial communication links, and do not promote privacy since all information  is  shared  among  the  agents.  	Designing  policies based on  arbitrary communication structures can lead to non-convex optimization problems which are typically NP-hard. In  this  work,  we propose an optimization framework for decentralized policy designs.  In contrast to the centralized information exchange, our approach requires only \emph{local communication exchange} among the neighboring  agents  matching the physical coupling of the network. Thus,  each agent only requires information from its direct neighbors, minimizing the need for excessive communication and promoting privacy amongst the agents.
	Using  robust  optimization  techniques, we formulate a convex optimization problem with a loosely coupled structure  that  can  be  solved  efficiently. We numerically demonstrate the  efficacy of the proposed approach in energy management and supply chain applications. We show that the proposed approach  leads to solutions that closely approximate those obtained by the centralized formulation only at a fraction of the computational effort.
\end{abstract}
\textbf{Keywords:} robust optimization; network control;  decentralized policy design; local information; state forecast sets; decision rules. 

\section{Introduction}
Controlling physical networks of interconnected systems remains an active field of research due to its high impact on real-world applications, e.g., regulation of power networks \citep{Venkat2008}, energy management of building districts, \citep{Darivianakis2017b} and supply chains \citep{Tsay1999}. For large-scale systems, designing and deploying  policies that have a centralized communication structure can be challenging. In the design phase, computational limitations can restrict the size of the problem that can be tackled, while in the deployment phase, the centralized nature of the policy requires excessive communication which does not promote privacy between the interconnected systems. In such cases, it is desirable to design policies with local information exchange that  ideally rely on local computational resources to compute their policies.

Synthesizing policies based on an arbitrary communication structure can lead to non-convex infinite-dimensional optimization problems which are typically NP-hard \citep{Tsitsiklis1985}. For that reason, several studies have been devoted to identifying specific communication structures under which the designing policies can be cast as a convex problem \citep{Lin2011,Mahajan2012}. For instance, if the communication network admits a partially nested structure \citep{Ho1972}, then affine policies are known to be optimal for decentralized linear systems with quadratic costs and additive Gaussian noise \citep{Ho1972,Rantzer2006a,Rantzer2006b}. Similar results exist for communication structures that are spatially invariant \citep{Bamieh2002,Bamieh2002,Motee2008}, including delays in information sharing \citep{Lamperski2015,Nayyar2010,Nayyar2013}.

Recent advances shifted research interest in identifying information communication structures that allow for the optimal distributed controllers synthesis problem to be formulated as a convex optimization problem \citep{Bamieh2005,DeCastro2002,Dvijotham2013,Matni2013,Qi2004}. These structures usually possess properties such as quadratic invariance \citep{Rotkowitz2005,Swigart2014} and funnel causality \citep{Bamieh2005} which essentially eliminates the incentive of signaling among the interconnected systems. For general network structures, the usual practice is to resort to linear matrix inequality relaxations \citep{Langbort2004,Zecevic2010} or semidefinite programming relaxations \citep{Lavaei2011,Fazelnia2016} to obtain a suboptimal design with performance guarantees.  

A downside of the aforementioned design approaches is the inability  to cope with state and input constraints in the systems. Optimization based approaches, often referred to as Model Predictive Control, are well-suited for the control of constrained systems \citep{Mayne2000}. In this setting, control designs are usually categorized into cooperative or non-cooperative \citep{Scattolini2009}  referring to the level of information exchange amongst the interconnected systems. As before, cooperative approaches require substantial communication infrastructure and computation resources in the design phase since a system-wide optimization problem is formulated and solved \citep{Venkat2008,Stewart2010,Giselsson2013}. On the other hand, non-cooperative approaches, though computationally simple and effective in practice, can be conservative in presence of strong coupling \citep{Richards2004,Keviczky2006,Trodden2010}.  In addition, non-cooperative schemes typically require a centralized offline design phase, thus suffering from similar complexity and privacy concerns as the centralized designs.

There is a stream of literature that tries to address these issues by developing  schemes that rely on local computational resources and information structures \citep{Camponogara2002,Dunbar2007,Farina2012,Lucia2015,Trodden2017}. This is commonly achieved by each system individually considering the worst-case effect of its neighbors as a bounded exogenous uncertainty to its own system. Nevertheless, this can lead to conservative designs as the sets of bounded exogenous uncertainties are \emph{calculated off-line}, thus disregarding the possibility of adapting their size based on the dynamical evolution of the system.

Recent advances in robust optimization techniques deal with exogenous uncertainties in a computationally efficient way,  allowing to address both static and multistage problems \citep{Bertsimas2011,Delage2015,Gorissen2015}.
In contrast, problems with endogenous uncertainties usually referred to as problems with decision-dependent uncertainty sets,  are typically computationally intractable \citep{Nohadani2018}. However, by exploiting structural characteristics of the problem such as right-hand side uncertainty in the constraints, the  work of \citep{Jaillet2016,Zhang2017,Bitlisliouglu2017} proposed approximations that restrict the space of admissible uncertainty sets to those that exhibit an affine dependence on  fixed sets. By doing so, the resulting approximate problems can be equivalently cast as robust optimization problems with exogenous uncertainties, and hence they can be efficiently solved using existing methods.
In this paper, we leverage similar techniques  to design control policies with local information structure, by \emph{designing on-line} the bounded sets of exogenous uncertainties which model the worst-case effect of  the neighbors for a given system.

In this paper, we address communication and computation issues of policy designs for physical networks of interconnected systems.  The contributions of this paper are:
\begin{itemize}
	\item[$\diamond$] We propose a new modeling paradigm for designing policies with local communication exchanges which exactly match the physical coupling of the network. In contrast to centralized and partially nested models where 
	the states are explicitly communicated amongst the neighboring agents as  functions of the uncertain parameters, the proposed approach communicates compact sets which we refer to as the \emph{state forecast set}, implicitly encompassing the states of neighboring systems. As the agents are only coupled through the forecast sets, the proposed structure addresses privacy concerns in the information exchange and allows for a natural interpretation of the resulting policy. The framework generalizes the proof-of-concept conference paper \citep{Darivianakis2017a} in which a simple version of the idea was applied for the efficient energy management of building districts.

	\item[$\diamond$] We show that the proposed paradigm directly relates to centralized and partially nested information exchange policy designs. In particular, we show that the optimal local communication policy constitutes a conservative approximation to the partially nested information design problem, while the optimal partially nested information policy constitutes a conservative approximation to the centralized information design problem.  In addition, we identify network structures for which the optimal values of these problems coincide. Hence, the proposed approach fits well within the established modeling paradigms.
	
	\item[$\diamond$] We propose a tractable approximation that restricts the functional form of the state forecast sets to an affine transformation of a fixed set, in a similar spirit as \citep{Jaillet2016,Zhang2017,Bitlisliouglu2017}. Under this restriction, we show that the problem can be cast as a multistage robust optimization problem,  which is a well studied class of problems and multiple solution methods exist to either approximate or solve the problem explicitly. By construction, the resulting problem has a  decoupled structure making the problem  highly scalable with respect to the number of interconnected systems in the network. The efficacy of the proposed approach is demonstrated in three numerical experiments where we study the quality of the solution with respect to the state forecast sets approximation, the scalability properties of the approach, as well as how the problem can be solved in an almost decentralized manner using the alternating direction method of multipliers algorithm.
	
\end{itemize} 

The remainder of this paper is organized as follows. Section~\ref{sec::ProbForm} provides the problem formulation and briefly reviews the centralized and partially nested information exchange policies. Section~\ref{sec::DecCont} presents the new modeling paradigm and discusses the relationship between the centralized and partially nested policy designs. In Section~\ref{sec::SolMethod} we derive tractable approximations and show how the problem can be cast  as a multistage robust optimization problem. In Section \ref{sec::Numerics} we present two numerical examples: (i) an illustrative example that showcases the proposed method and discusses the effect of the state forecast sets approximation, and (ii) a cooperative energy management system. All proofs are found in the Appendix~\ref{app::proof}. We also provide an additional numerical examples in Appendix \ref{supplychain} that examines a contract design problem for a supply chain.

\textbf{Notation:} 
The calligraphic letters $\mathcal M, \mathcal P, \mathcal T$ are reserved for finite index sets with cardinalities $M, P, T$, that is, $\mathcal M = \{1, \dots, M \}$ etc. The subscript $+$ in $\mathcal T_+$ indicates that the index set $\mc T$ additionally includes $T+1$, that is, $\mathcal T_+ = \mathcal T \bigcup \{T+1\}$. Concatenated vectors are represented in boldface. Dimensions of matrices and concatenated vectors are assumed clear from the context. For given vectors $ v_{i} \in \mb R^{k_i} $ with $ k_i \in \mb N $, $ i \in \mc M$, we define $ \bm v_\Mu = [v_{i}]_{i\in \mc M} = [v_{1}^\top \ldots v_{M}^\top]^\top \in \mb R^{k} $ with $ k = \sum_{i=1}^{M}k_i $ as their vector concatenation. Given time dependent vectors $ \nu_{i,t} \in \mb R^{\ell_i} $ with $ i \in \mc M $, $ t \in \mc T $ and $ \ell_i \in \mb N $, we define $ \bm \nu_{\Mu,t} = [\nu_{i,t}]_{i \in \mc M} $ as the concatenated vector at time $t$, $ \bm \nu_i^t = [\nu_{i,1}^\top \ldots \nu_{i,t}^\top]^\top $ as the history of the $ i $-th vector up to time $ t $, and $ \bm \nu_\Mu^t = [\bm \nu_i^t]_{i \in \mc M} $ as the history of the concatenated vector up to time $ t $. We denote by $\text{ext}(\Xi)$ the set of extreme points of set $\Xi$. An extended notation section summarizing the  major notation can be found in Appendix~\ref{appedix::notation}.

\section{Problem formulation} \label{sec::ProbForm}

We consider a physical network comprising $ M $ interconnected systems, henceforth referred to as agents. We assume that the agents are coupled  through the dynamics. We describe these interactions through a directed graph in which an arc connecting agent $ j $ to agent $ i $, with $ i, j \in \mc M$, indicates that the states of the $ j $-th agent affect the dynamics of the $ i $-th agent. We refer agent $ j $ as the preceding neighbor to agent $ i $, henceforth neighbor, and we define the set $ \mc N_i \subset \mc M $ to include all the neighbors of the $ i $-th agent. Figure \ref{fig::physNet} illustrates a system of $ M = 5 $ agents where the neighbors of agent 3 are given by $ \mc N_3 = \{2, 5\}$. In the sequel, we refer to the {physical network} depicted in Figure \ref{fig::physNet} as the ``working example'' and  use it to streamline the presentation of key ideas in the paper. 
\begin{figure*}[ht]
	\centering
	\includegraphics[width = 0.4\textwidth]{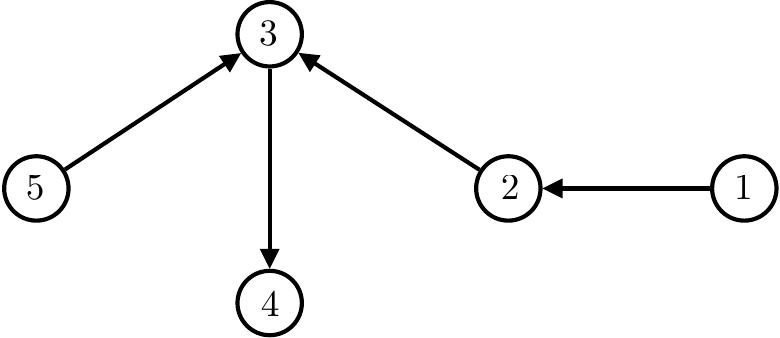}
	\caption{\textbf{Working example}: \emph{Physical coupling graph of 5 agents}. Solid line arrows represented the direction of the interaction.}
	\label{fig::physNet}
\end{figure*}

\subsection{System dynamics, constraints, and objective function} \label{Section::BasicInfo}

In this paper, we study finite horizon problems with $T$ stages. We use linear dynamics to model the state evolution of the agent $i$ at time instant $ t \in \mc T $, as 
\begin{equation} \label{eq::stateDynamics}
	x_{i,t+1} = A_{i,t} x_{i,t} + B_{i,t} \bm x_{\Ni,t} +D_{i,t} u_{i,t} + E_{i,t}\xi_{i,t}. 
\end{equation} 
Here $ x_{i,t} \in \mb R^{n_{x,i}} $ denotes the states, with the initial state $ x_{i,1} $ known. The interaction of agent $i$ and its neighbors is captured through the term $ B_{i,t} \bm x_{\Ni,t}$. Vector $ u_{i,t} \in \mb R^{n_{u,i}} $ models the inputs and $ \xi_{i,t} \in \mb R^{n_{\xi,i}} $ captures the exogenous uncertainties affecting the system dynamics. The time-varying system matrices $ A_{i,t} $, $ B_{i,t} $, $D_{i,t} $ and $ E_{i,t} $ are assumed known with proper dimensions and of full column rank. To economize on notation, we now compactly rewrite \eqref{eq::stateDynamics} as 
\begin{equation*}\label{eq::stateDynamicsCompact}
	\bm x_i = f_i(\bm x_{\Ni}, \bm u_i, \bm \xi_i) := A_i x_{i,1} + B_i \bm x_{\Ni} + D_i \bm u_i + E_i \bm \xi_i,
\end{equation*}
where $ \bm x_i := [x_{i,t}]_{t\in \mc T_+} $, $ \bm u_i := [u_{i,t}]_{t \in \mc T} $, $ \bm \xi_i := [\xi_{i,t}]_{t\in \mc T} $ and $ \bm x_{\Ni} := [\bm x_{\Ni,t}]_{t \in \mc T} $. The system matrices $ A_i $, $ B_i $, $ D_i $ and $ E_i $ used to define the function $ f_{i}(\cdot)  $ are directly constructed by the problem data given in \eqref{eq::stateDynamics} (see, e.g., \citep{Goulart2006} for such a derivation). 
The  $ i $-th agent is subject to linear constraints
\begin{equation}\label{eq::InequalitiesCompact}
	\mc O_i = \big\{(\bm x_{i}, \bm u_{i}) \,:\, H_{x,i}\bs{x}_i + H_{u,i} \bm u_i \leq h_i\big\},
\end{equation}
where the matrices $ H_{x,i} $, $ H_{u,i} $ and $ h_i $ are assumed known and of proper dimensions. Note that this compact constraint formulation allows the consideration of time-varying linear operational constraints with time-stage coupling. In addition,  constraints involving neighboring states or exogenous uncertainties can always be included by appropriately extending the state space of the $ i $-th system. The objective associated with $ i $-th agent is given by
\begin{equation}\label{eq::objFnc}
	J_i(\bm x_i,\bm u_i) =  \sum_{t=1}^T \left(\| Q_i  x_{i,t} \|_q + \| R_i  u_{i,t} \|_q \right), 
\end{equation}
where $ q \in \{\infty, 1\} $ allows for different objective formulation. The penalization matrices $Q_i$, $R_i$ are assumed known and of proper dimensions.

In the following, we assume that the exogenous uncertainties affecting agent $ i $ reside in the nonempty, convex and compact polyhedral uncertainty set $\Xi_i = \{\bm \xi_i :  W\bm \xi_i \geq  w\}$ where matrix $W$ and  vector $w$ are known and of proper dimensions. We will be making the simplifying assumption that the joint uncertainty set of all agents in the system has a decoupled structure, i.e.,
$ \bm \xi_\Mu \in \Xi_\Mu = \bigtimes_{i \in \mc M} \Xi_i $, which  essentially precludes the existence of  coupled uncertainties amongst the agents. This assumption can be  relaxed at the expense of further case distinctions in what follows.

\subsection{Designing policies with centralized information exchange}\label{centralized}

A common assumption in designing  policies is to assume that at time $t$, each agent has access to the states from all the other agents in the network up to and including period $t$ \citep{Goulart2006,Hadjiyiannis2011}. We will refer to this communication as the \emph{centralized information exchange}, depicted in Figure \ref{fig::InfStr_CS} for the working example. In this context, we denote the \emph{causal state feedback} policies for agent $i$ at time $t\in\mc T$ as  \mbox{$\pi_{i,t}:\mb R^{n_x^t} \rightarrow \mb R^{n_{u,i}} $} where $ n_{x}^t = t \left(\sum_{j\in \mc M} n_{x,j} \right)$, such that its input at time $ t $ is given as $ u_{i,t} = \pi_{i,t}(\bm x_\Mu^t) $. We write $ \bm \pi_i(\bm x_\Mu)  = [\pi_{i,t}(\bm x_\Mu^t)]_{t\in\mc T} $ to denote the policy concatenation over the horizon.

\begin{figure*}[ht]
	\centering
	\includegraphics[width = 0.4\textwidth]{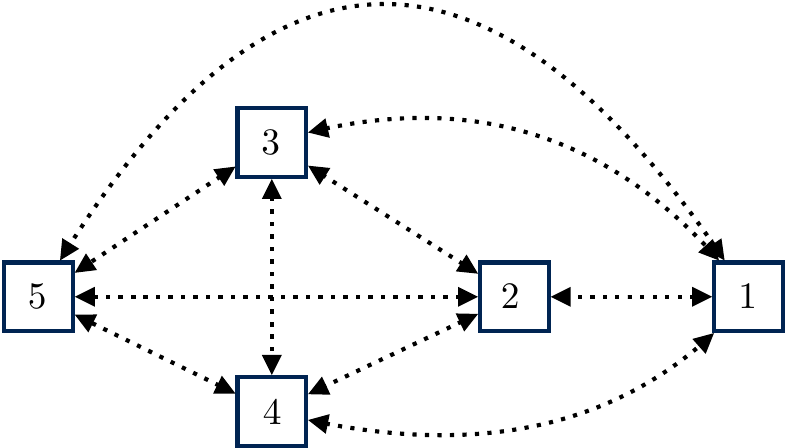}
	\caption{\textbf{Working example}: \emph{Centralized information exchange}. Dotted line arrows represent the  communication links between the agents, with $\mathcal{M} = \{1,2,3,4,5\}$.}
	\label{fig::InfStr_CS}
\end{figure*}

The optimization problem for designing policies with centralized information exchange which minimize the sum of worst-case individual objectives is formulated as
\begin{subequations}\label{BothCentralized}
	\begin{equation}\label{Centralized}
		\begin{array}{l}
			\text{~minimize~~} \displaystyle\sum\limits_{i = 1}^M \max\limits_{\bm \xi_{\mathcal M} \in \Xi_{\mathcal M}} J_i(\bm x_i,\bm u_i) \\
			\left.\begin{array}{@{}r@{~}l@{}}
				\text{subject to }
				& \bm u_i =\bm \pi_i(\bm x_\Mu)  := [\pi_{i,t}(\bm x_\Mu^t)]_{t\in\mc T}\\
				
				& \bm x_{i} = f_{i}\big(\bm x_{\Ni}, \bm u_{i}, \bm \xi_{i}\big)\\
				& (\bm x_{i}, \bm u_{i}) \in \mc O_i
			\end{array}\right \rbrace \forall \bm \xi_\Mu \in \Xi_\Mu,\; \forall i \in \mc M,
		\end{array}
	\end{equation}
	where the  optimization variables  are the policies $ \bm \pi_i(\cdot) $ for all $ i \in \mc M $. 
	As shown in \citep{Goulart2006,Hadjiyiannis2011}, the state feedback structure of the policies induces a non-convex feasible region. To deal with this, they propose the design of \emph{strictly causal uncertainty feedback policies} $ \Pi_{i,t}:\mb R^{n_{\xi}^t} \rightarrow \mb R^{n_{u,i}} $ where $ n_{\xi}^t = (t-1)\sum_{i\in \mc M} n_{\xi,i} $, such that the input at each time step is given by $ u_{i,t} = \Pi_{i,t}(\bm \xi_\Mu^{t-1}) $. We write $ \bm \Pi_i(\bm \xi_\Mu)  = [\Pi_{i,t}([\bm \xi_\Mu^{t-1}]_{t\in\mc T}) $ to denote the policy concatenation over the horizon.
	This formulation leads to the infinite dimensional linear optimization
	\begin{equation}\label{Centralizedb}
		\begin{array}{l}
			\text{~minimize~~} \displaystyle\sum\limits_{i = 1}^M \max\limits_{\bm \xi_{\mathcal M} \in \Xi_{\mathcal M}} J_i(\bm x_i,\bm u_i) \\
			\left.\begin{array}{@{}r@{~}l@{}}
				\text{subject to }
				& \bm u_i =\bm \Pi_i(\bm \xi_\Mu)  := [\Pi_{i,t}(\bm \xi_\Mu^{t-1})]_{t\in\mc T}\\
				& \bm x_{i} = f_{i}\big(\bm x_{\Ni}, \bm u_{i}, \bm \xi_{i}\big)\\
				& (\bm x_{i}, \bm u_{i}) \in \mc O_i
			\end{array}\right \rbrace \forall \bm \xi_\Mu \in \Xi_\Mu,\; \forall i \in \mc M.
		\end{array}
	\end{equation}
\end{subequations}
Using the fact that matrices $A_{i,t}$, $B_{i,t}$ and $E_{i,t}$ are full column rank, there is a one-to-one relationship between state and uncertainty feedback policies in terms of feasibility and optimality. Restricting the admissible policies to have an affine structure and reformulating using robust optimization techniques reduces the problem to a finite-dimensional linear optimization problem which can be solved with off-the-shelf optimization solvers. 
Furthermore, due to the one-to-one relationship between state and uncertainty feedback policies, there is also a unique mapping that translates the resulting affine uncertainty  policies to an equivalent affine state feedback policy which  allows  being implemented locally by each agent using the centralized communication network. 

Although theoretically appealing, policies based on centralized information exchange are hard to design and implemented in practice for large-scale systems. This is partially the case since for large networks $ (i) $ solving the linear optimization problem resulting from the affine policy approximation  can be computationally challenging due to its monolithic structure; while $ (ii) $ the excessive communication and the centralized physical network required to allow each agent to evaluate its policy, does not promote privacy since the exact policy/constraints of individual agents are eventually revealed to the rest of the network. We will demonstrate the former through numerical experiments in Section \ref{sec::Numerics}.  

\subsection{Designing policies with partially nested information exchange}\label{partiallynested}

In an attempt to address the computational and privacy issues, researchers have proposed a number of policy designs that consider only partial communication among the agents. For an arbitrary information exchange network the design phase typically results in a non-convex problem that is computationally intractable. A notable exception is the work of \citep{Lin2016} which assumes a \emph{partially nested information exchange}, leading to  convex formulations. Roughly speaking this communication exchange implies that  agent $ i $ has access to information coming from all of its \emph{precedent agents} in a non-anticipative manner. In this setting, agent $ j $ is named a\emph{ precedent to agent $ i $}, if the input of agent  $ j $ at time $ t' $ can affect the local information available to agent $ i $ at some time $ t > t' $ in the future \cite[Definition 1]{Lin2016}. The partially nested information exchange associated with the working example is depicted in Figure~\ref{fig::InfStr_NS}, where we see that although agent 1 is not a physical neighbor of agents 3 and 4, its actions can affect the future states of both these agents, thus it is a precedent agent.
\begin{figure*}[ht]
	\centering
	\includegraphics[width = 0.4\textwidth]{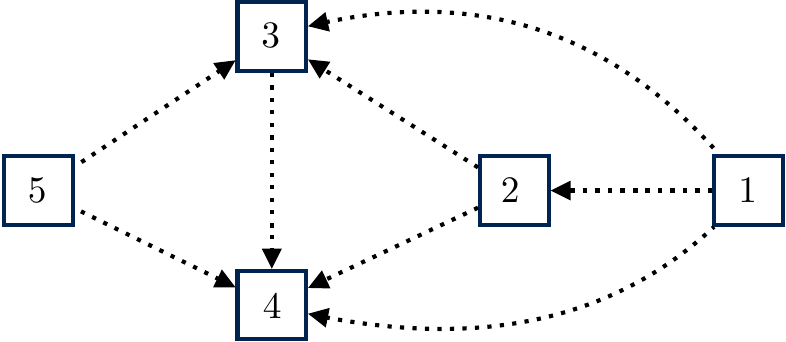}
	\caption{\textbf{Working example}: \emph{Partially nested information exchange}. Dotted line arrows represent the established communication links between local controllers, with $\ol{\mc N}_1 = \{1\}$, $\ol{\mc N}_2 = \{1,2\}$, $\ol{\mc N}_3 = \{1,2,3,5\}$, $\ol{\mc N}_4 = \{1,2,3,4,5\}$ and $\ol{\mc N}_5 = \{5\}$.}
	\label{fig::InfStr_NS}
\end{figure*}

In a partially nested communication, the policy is designed as follows. We denote by $ \ol{\mc N}_i \subseteq \mc M $ the set that includes agent $ i $ and all its precedent agents. At time $t$,  the $ i $-th agent measures  its own states and the states of its precedent agents, denoted by  $ \bm x_{\oNi,t} = [x_{j,t}]_{j \in \ol{\mc N}_i} $. Using all measurements from stage 1 up to time $t$, it designs a \emph{causal partial state feedback} policy $ \phi_{i,t}:\mb R^{\ol{n}_{x,i}^t} \rightarrow \mb R^{n_{u,i}} $, where $ \ol{n}_{x,i}^t = t \left(\sum_{j\in \ol{\mc N}_i} n_x^j\right) $. The input is now given as $ u_{i,t} = \phi_{i,t}(\bm x_\oNi^t) $. We write $ \bm \phi_i(\bm x_\oNi)  = [\phi_{i,t}(\bm x_\oNi^t)]_{t\in\mc T} $ to denote the policy concatenation over the time horizon.

The optimization problem to design   policies with partially nested information exchange that minimize the sum of worst-case individual objectives is formulated as
\begin{subequations}\label{BothPartiallyNested}
	\begin{equation}\label{Semi-Centralized}
		\begin{array}{l}
			\text{~minimize~~} \displaystyle\sum\limits_{i = 1}^M \max\limits_{\bm \xi_\oNi \in \Xi_\oNi} J_i(\bm x_i,\bm u_i) \\
			\left.\begin{array}{@{}r@{~}l@{}}
				\text{subject to } 
				&\bm u_i =\bm \phi_i(\bm x_\oNi)  := [\phi_{i,t}(\bm x_\oNi^t)]_{t\in\mc T}\\
				& \bm x_{i} = f_{i}\big(\bm x_{\Ni}, \bm u_{i}, \bm \xi_{i}\big)\\
				& (\bm x_{i}, \bm u_{i}) \in \mc O_i
			\end{array}\right \rbrace \forall \bm \xi_\oNi \in \Xi_{\oNi},\;
			\forall i \in \mc M,
		\end{array}
	\end{equation}
	where $ \Xi_{\oNi} = \bigtimes_{j \in \ol{\mc N}_i} \Xi_j $ and  the  optimization variables  are the policies $ \bm \phi_{i}(\cdot) $ for all $ i \in \mc M $. \PSCs is typically non-convex due to the state feedback structure of the policies. Similar to the centralized case, \citep{Lin2016} proposes the use of \emph{strictly causal partial nested uncertainty feedback policies} $ \Phi_{i,t}:\mb R^{\ol{n}_{\xi,i}^t} \rightarrow \mb R^{n_{u,i}} $ where $ \ol{n}_{\xi,i}^t = (t-1) \left( \sum_{j\in \ol{\mc N}_i} n_\xi^j\right) $, such that the input at each time step is given by $ u_{i,t} = \Phi_{i,t}(\bm \xi_\oNi^{t-1}) $.
	We write $ \bm \Phi_i(\bm \xi_{\oNi})  = [\Phi_{i,t}(\bm \xi_\oNi^{t-1})]_{t\in\mc T} $ to denote the policy concatenation, 
	leading to the infinite dimensional linear optimization 
	\begin{equation}\label{Semi-Centralizedb}
		\begin{array}{l}
			\text{~minimize~~} \displaystyle\sum\limits_{i = 1}^M \max\limits_{\bm \xi_\oNi \in \Xi_\oNi} J_i(\bm x_i,\bm u_i) \\
			\left.\begin{array}{@{}r@{~}l@{}}
				\text{subject to }
				& \bm u_i =\bm \Phi_i(\bm \xi_{\oNi})  := [\Phi_{i,t}(\bm \xi_\oNi^{t-1})]_{t\in\mc T} \\
				& \bm x_{i} = f_{i}\big(\bm x_{\Ni}, \bm u_{i}, \bm \xi_{i}\big)\\
				& (\bm x_{i}, \bm u_{i}) \in \mc O_i
			\end{array}\right \rbrace \forall \bm \xi_\oNi \in \Xi_{\oNi},\;
			\forall i \in \mc M.
		\end{array}
	\end{equation}
\end{subequations}
If these uncertainty feedback policies are restricted to admit an affine structure and using robust optimization to reformulate the resulting semi-infinite constraints, then Problem~\eqref{Semi-Centralizedb} becomes a finite-dimensional linear optimization problem. As with the centralized case, there is a one-to-one relationship between the state and uncertainty feedback policies, both for the infinite-dimensional and affine restriction. This  allows the agents in the network to evaluate their policies based on the established  partially nested communication.

The following theorem establishes the connection between centralized and partially nested information design problems and will be used in the following section  to demonstrate the relationship between the proposed approach to the centralized and partially nested policy structures.
\begin{theorem}\label{thm::1}
	Problem~\eqref{Semi-Centralizedb} is a conservative approximation of Problem~\eqref{Centralizedb} in the following sense: every feasible solution of Problem~\eqref{Semi-Centralizedb} is feasible in Problem~\eqref{Centralizedb}, and the optimal value of Problem~\eqref{Semi-Centralizedb} is  larger or equal to the optimal value of Problem~\eqref{Centralizedb}.
\end{theorem}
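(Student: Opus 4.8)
The plan is to construct a value-preserving embedding of the feasible solutions of Problem~\eqref{Semi-Centralizedb} into the feasible solutions of Problem~\eqref{Centralizedb}; since both are minimizations over their respective solution sets, this yields at once the feasibility statement and the inequality between optimal values. Concretely, let $\{\bm\Phi_i(\cdot)\}_{i\in\mc M}$ be feasible for Problem~\eqref{Semi-Centralizedb}. Because $\ol{\mc N}_i\subseteq\mc M$, the tuple $\bm\xi_\oNi^{t-1}$ is a subvector of $\bm\xi_\Mu^{t-1}$, so I would define the centralized policies $\Pi_{i,t}(\bm\xi_\Mu^{t-1}):=\Phi_{i,t}(\bm\xi_\oNi^{t-1})$ for all $i\in\mc M$, $t\in\mc T$, i.e.\ $\bm\Pi_i$ is $\bm\Phi_i$ pre-composed with the coordinate projection onto the $\ol{\mc N}_i$-components. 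The candidate solution of Problem~\eqref{Centralizedb} is $\{\bm\Pi_i(\cdot)\}_{i\in\mc M}$ (and this map sends affine policies to affine policies, so the argument also covers the affine restrictions).

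The crux is the following invariance claim: under the closed loop induced by $\{\bm\Phi_i\}$ — equivalently, by $\{\bm\Pi_i\}$, since the two induce identical dynamics-and-policy equations — the trajectories $\bm x_i$ and $\bm u_i$ of agent $i$ are functions of $\bm\xi_\oNi$ alone. I would first note that, since the dynamics \eqref{eq::stateDynamics} are causal in time, the coupled equations $\bm u_j=\bm\Phi_j(\bm\xi_\oNj)$ and $\bm x_j=f_j(\bm x_\Nj,\bm u_j,\bm\xi_j)$ over $j\in\mc M$ have a unique solution $(\bm x_\Mu,\bm u_\Mu)$ as a function of $\bm\xi_\Mu$, the same in both problems. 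I would then prove the claim by induction on $t$: reading off \eqref{eq::stateDynamics}, $x_{i,t+1}$ depends only on $x_{i,t}$ (handled by the induction hypothesis), on $u_{i,t}=\Phi_{i,t}(\bm\xi_\oNi^{t-1})$ and $\xi_{i,t}$ (already functions of $\bm\xi_\oNi$, since $i\in\ol{\mc N}_i$), and on $\bm x_{\Ni,t}$; for $j\in\mc N_i$ the induction hypothesis gives that $x_{j,t}$ is a function of $\bm\xi_\oNj$, and the nestedness $\ol{\mc N}_j\subseteq\ol{\mc N}_i$ for physical neighbors $j\in\mc N_i$ — immediate from the definition of precedent agents, cf.\ \citep{Lin2016} — upgrades this to a function of $\bm\xi_\oNi$. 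In particular $J_i(\bm x_i,\bm u_i)$ depends on $\bm\xi_\oNi$ only.

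With the invariance claim in hand, the assumed decoupled structure $\Xi_\Mu=\bigtimes_{i\in\mc M}\Xi_i$ closes the argument: the projection of $\Xi_\Mu$ onto the $\ol{\mc N}_i$-components is exactly $\Xi_\oNi=\bigtimes_{j\in\ol{\mc N}_i}\Xi_j$. Hence, for the candidate $\{\bm\Pi_i\}$, the requirement $(\bm x_i,\bm u_i)\in\mc O_i$ for all $\bm\xi_\Mu\in\Xi_\Mu$ holds if and only if $(\bm x_i,\bm u_i)\in\mc O_i$ for all $\bm\xi_\oNi\in\Xi_\oNi$, which is precisely the constraint satisfied by $\{\bm\Phi_i\}$ in Problem~\eqref{Semi-Centralizedb}; likewise $\max_{\bm\xi_\Mu\in\Xi_\Mu}J_i(\bm x_i,\bm u_i)=\max_{\bm\xi_\oNi\in\Xi_\oNi}J_i(\bm x_i,\bm u_i)$ for each $i$. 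Therefore $\{\bm\Pi_i\}$ is feasible in Problem~\eqref{Centralizedb} with the same objective value that $\{\bm\Phi_i\}$ attains in Problem~\eqref{Semi-Centralizedb}; this is the feasibility statement, and taking the infimum over all feasible $\{\bm\Phi_i\}$ gives that the optimal value of Problem~\eqref{Semi-Centralizedb} is larger than or equal to that of Problem~\eqref{Centralizedb}.

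I expect the only real obstacle to be the bookkeeping inside the invariance claim — pinning down the nestedness $\ol{\mc N}_j\subseteq\ol{\mc N}_i$ for $j\in\mc N_i$ and the well-posedness of the closed-loop dynamics — rather than anything conceptually deep; the rest is a routine lifting argument that exploits the product form of the uncertainty set.
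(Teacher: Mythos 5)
Your proposal is correct and follows essentially the same route as the paper's proof: both arguments hinge on the inductive observation that, thanks to the nestedness $\ol{\mc N}_j\subseteq\ol{\mc N}_i$ for $j\in\mc N_i$, the closed-loop state $x_{i,t}$ under a partially nested policy is a function of $\bm\xi_\oNi^{t-1}$ alone, so each agent's constraints and worst-case objective depend only on $\bm\xi_\oNi$, and the partially nested policies are simply a restriction of the centralized ones. Your explicit mention of the product structure of $\Xi_\Mu$ (to equate the two worst-case maximizations) makes a step precise that the paper leaves implicit, but it is the same argument.
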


Partially nested information exchange slightly reduces the communication requirements compared to the centralized problem (see Figures~\ref{fig::InfStr_CS} and \ref{fig::InfStr_NS} of the working example). This has a positive impact on the solution time needed to design affine feedback policies, however, the resulting linear program inherits in large part the monolithic structure of the centralized problem due to the absence of a  non-sparse structure. Most importantly, even in the simple model of our working example, the partially nested communication requires three additional links compared  to the physical links. In strongly connected physical networks, i.e.,  in networks where there is a directed path from any node to every other node in the network, the minimum number of communications links needed to ensure partially nested information coincides with the centralized information exchange, see the example presented in Section~\ref{sec::energy} with Figures~\ref{fig::energyhub} and \ref{fig::SMD_Network}.
Therefore, the synthesis of policies with partially nested information  inherits, to a large extent, the drawbacks of the centralized problem, both from a computational and privacy standpoint.

\section{Designing  policies with local information exchange} \label{sec::DecCont}
In this paper, we propose a decentralized policy structure that relies on local information exchange among the agents. The proposed policy aims to address both the computational and privacy concerns discussed so far. While in the previous section the information flow had to be sufficiently complex to capture a partially nested structure, in this section we will assume that the information flow can be as simple as the physical network, as this is illustrated in Figure \ref{fig::InfStr_DS} for the working example. 
\begin{figure*}[ht]
	\centering
	\includegraphics[width = 0.4\textwidth]{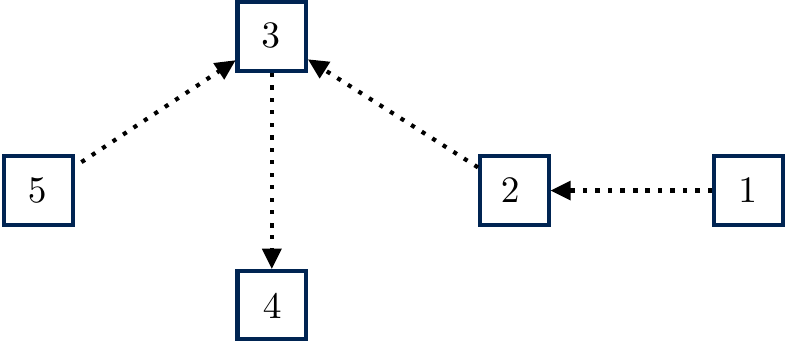}
	\caption{\textbf{Working example}: \emph{Local information exchange}. Dotted line arrows represent the established communication links between agents, $\mc N_1 = \emptyset$, $\mc N_2 = \{1\}$, $\mc N_3 = \{2,5\}$, $\mc N_4 = \{3\}$ and $\mc N_5 = \emptyset$}
	\label{fig::InfStr_DS}
\end{figure*}

In contrast to the previous section where agent  $ j\in\mc{N}_i $ communicates to agent $i$ explicitly 	communicates its states which are functions of the uncertain parameters, in the proposed framework agent  $ j$  communicates a compact set $ \mc X_j \subseteq \mb R^{N_{x,i}}$ where $N_{x,i} = \sum_{t\in \mathcal{T}_+}n_{x,i}$, henceforth referred as the \emph{state forecast set}, that contains possible evolution of its states, i.e., $\bm x_j \in \mc X_j$. In this framework, the shape of $ \mc X_j $ is a decision quantity for  agent $ j $. Upon receiving these state forecast sets from all of its neighbors, agent $ i $ treats these states as uncertain quantities that affect its dynamics in a similar way as exogenous uncertainties. To emphasize this, we denote by $ \zeta_{j,t}\in\mathbb{R}^{n_{x,j}} $ the \emph{belief} of agent $ i $ about the states of agent $j$ at time $t$, such that $\bs\zeta_j=[\zeta_{j,t}]_{t\in\mc{T}_+} \in \mc{X}_j$. In this context, the policy of agent $i$ at time $t$ is based on the information from its own states $x_{i,t}$ and on the belief states $ \bm \zeta_{\Ni,t} $, from stage 1 up to $t$. This leads us to design \emph{causal local state/uncertainty feedback policies} $ \psi_{i,t}:\mb R^{\hat n_{x,i}^t} \rightarrow \mb R^{n_u^i} $ where $ \hat {n}_{x,i}^t = t\left(n_{x,i} + \sum_{j\in \mc N_i} n_{x,j}\right) $, such that the  input of agent $i$ at time $ t $ is given by $ u_{i,t} = \psi_{i,t}(\bm x_i^t, \bm \zeta_\Ni^t) $. We denote by $ \bm {\psi}_{i}(\bm x_i, \bm \zeta_{\Ni}) = [\psi_{i,t}(\bm x_i^t, \bm \zeta_\Ni^t)]_{t \in \mc T} $ the policy concatenation over the time horizon.

In this decentralized setting, the robust optimization problem to design policies with local information exchange which minimize the sum of worst-case individual objectives is formulated as
\begin{subequations}
	\begin{equation}\label{Decentralized}
		\begin{array}{l}
			\text{~minimize~~} \displaystyle\sum\limits_{i = 1}^M \max\limits_{\bm \xi_i \in \Xi_i, \bm \zeta_{\Ni} \in \mc X_{\Ni}} J_i(\bm x_i,\bm u_i) \\
			\left.\begin{array}{@{}r@{~}l@{}}
				\text{subject to }
				& \bm u_i = \bm {\psi}_{i}(\bm x_i, \bm \zeta_{\Ni}) := [\psi_{i,t}(\bm x_i^t, \bm \zeta_\Ni^t)]_{t \in \mc T}\\
				& \bm x_{i} = f_{i}\big(\bm \zeta_{\Ni}, \bm u_{i}, \bm \xi_{i}\big)\\
				& (\bm x_{i}, \bm u_{i}) \in \mc O_i \\
				& \bm x_i \in \mc X_i, \; \mc X_i \in \mc{F}(\mc S_i)
			\end{array}\right \rbrace \begin{array}{@{}l}
				\forall \bm \zeta_{\Ni} \in \mc X_{\Ni}\\
				\forall \bm \xi_i \in \Xi_i
			\end{array}\;\forall i \in \mc M,
		\end{array}
	\end{equation}
	where $ \mc X_{\Ni} = \bigtimes_{j \in \mc N_i} \mc X_j $, $\mc F(\mc S_i) $ denotes the field of sets generated by all the subsets of the power set of $ \mc S_i $, where $\mc S_i\subseteq\mb R^{N_{x,i}}$.
	The optimization variables  are the policies $ \bm {\psi}_{i}(\cdot) $ and sets $ \mc X_i $ for all $ i \in \mc M $.  Since agent $i$ treads the beliefs $\bm \zeta_{\Ni}$ as exogenous uncertainties,  its decisions are taken in view of the worst-case both with respect to  $\bm \xi_i \in \Xi_i$  and $\bm \zeta_{\Ni} \in \mc X_{\Ni}$. This is also reflected in the construction of  the objective function. Notice that agent $i$ is not directly affected by the uncertainties $\bm \xi_j \in \Xi_j$ of its neighbors. Rather, the effect of $\bm \xi_j \in \Xi_j$ of agent $j\in\mc N_i$ is been translated into  set $\mc X_j$, which in turn affects agent $i$.

	\PDs can be interpreted as a method for finding a compromise between the agents as this is represented through sets $\mc X_i$. If we focus attention on two agents, agents $i$ and $j$ with $j\in \mc{N}_i$, on the one hand,  agent $j$ will benefit the most if the set $\mc X_j$ is as large as possible. By doing so, set  $\mc X_j$  will not impose any additional constraints on its states, thus agent $j$ will individually achieve the lowest objective value contribution. On the other  hand, agent $i$ will benefit the most if it receives from agent $j$ the smallest possible set $\mc X_j$ (preferable $\mc X_j$ is a  singleton) which will reduce the uncertainty on its dynamics, and will have a positive effect in individually achieving the lowest objective value contribution. Since the objective of \PDs is to minimize the equally weighted sum of individual worst-case costs, the resulting policy/set pair finds the trade-off among the agents, achieving the lowest network-wide objective value, while being robustly  feasible with respect to all  $\bs{\xi}_\Mu \in \Xi_\Mu$.
	
	\PDs diminishes the privacy concerns in the following ways. First,  the local communication network ensures that the information exchange is the minimum among the agents as it is the same as the physical network. Second, focusing again on agents $i$ and $j$ with $j\in \mc{N}_i$,  agent $j$ does  not directly reveal its states which are functions of the uncertain parameters
	to agent $i$, which is the case in both the centralized and partially nested information exchange. 
	Rather,   the future state trajectories,  are ``masks" by  set $\mc X_j$, thus reducing exposure to  agent $i$ who might want to leverage the knowledge gained about the  constraints and dynamics of agent $j$.  If additionally, the set $\mc X_j$ has a simple structure, e.g., $\mc X_j$ is rectangular, agent $j$ will reveal  the bare minimum information to its neighbor.  This will be studied further in the next section. 
	
	The state feedback nature of policies 
	induces a non-convex  optimization problem. As in \citep{Hadjiyiannis2011,Lin2016},  we now  focus on purely uncertain feedback policies $ \Psi_{i,t}:\mb R^{\hat n_{\xi,i}^t} \rightarrow \mb R^{n_{u,i}} $ where $ \hat n_{\xi,i}^t = (t-1)n_{\xi,i} + t\sum_{j\in \mc N_i} n_{x,j} $, such that the input at time $t$ is $ u_{i,t} = \Psi_{i,t}(\bm \xi_i^{t-1},\bm \zeta_\Ni^{t}) $. We write $ \bm \Psi_i(\bm \xi_i, \bm \zeta_{\Ni})  = [\Psi_{i,t}(\bm \xi_i^{t-1},\bm \zeta_\Ni^{t})]_{t\in\mc T} $ to denote the policy concatenation over time. 
	Note that  the policy is ``strictly causal"  to the uncertain vector $\bm \xi_{i}^{t-1}$ which the policy is allowed to depend up to stage $t-1$, while the policy is ``causal" in state beliefs $ \bm \zeta_\Ni^{t} $ and is allowed to depend up to stage $t$.  The following theorem, establishes the equivalence between the proposed state/uncertainty and uncertainty feedback policies and the corresponding optimization problems.
	\begin{equation}\label{Decentralizedb}
		\begin{array}{l}
			\text{~minimize~~} \displaystyle\sum\limits_{i = 1}^M \max\limits_{\bm \xi_i \in \Xi_i, \bm \zeta_{\Ni} \in \mc X_{\Ni}} J_i(\bm x_i,\bm u_i) \\
			\left.\begin{array}{@{}r@{~}l@{}}
				\text{subject to }
				& \bm u_i = \bm \Psi_i(\bm \xi_i, \bm \zeta_{\Ni}) := [\Psi_{i,t}(\bm \xi_i^{t-1},\bm \zeta_\Ni^{t})]_{t\in\mc T} \\
				& \bm x_{i} = f_{i}\big(\bm \zeta_{\Ni}, \bm u_{i}, \bm \xi_{i}\big)\\
				& (\bm x_{i}, \bm u_{i}) \in \mc O_i \\
				& \bm x_i \in \mc X_i, \; \mc X_i \in \mc{F}(\mc S_i)
			\end{array}\right \rbrace \begin{array}{@{}l}
				\forall \bm \zeta_{\Ni} \in \mc X_{\Ni}\\
				\forall \bm \xi_i \in \Xi_i
			\end{array}\;\forall i \in \mc M,
		\end{array}
	\end{equation}
\end{subequations}
\begin{theorem}\label{thm::2}
	Problem~\eqref{Decentralized} is a equivalent to Problem~\eqref{Decentralizedb} in the following sense: Given a feasible state/uncertainty feedback policy $ \bm \psi_{i}(\cdot) $ for Problem~\eqref{Decentralized}, a feasible uncertainty feedback policy $ \bm \Psi_{i}(\cdot) $ for Problem~\eqref{Decentralizedb} can be constructed that achieves the same objective value, and vice versa. 
\end{theorem}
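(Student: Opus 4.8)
The plan is to reuse the classical equivalence between state‑feedback and strictly causal uncertainty‑feedback parametrizations from \citep{Goulart2006,Hadjiyiannis2011,Lin2016}, adapted to carry an extra \emph{causal} dependence on the belief vectors $\bm\zeta_\Ni$. Throughout the argument the sets $\mc X_i$ are held fixed, so the transformation acts only on the policies; it therefore suffices to show that, for fixed $\{\mc X_i\}_{i\in\mc M}$, the two policy classes induce exactly the same closed‑loop maps $(\bm\xi_i,\bm\zeta_\Ni)\mapsto(\bm x_i,\bm u_i)$. Once this is established, robust satisfaction of the dynamics, of $(\bm x_i,\bm u_i)\in\mc O_i$, and of $\bm x_i\in\mc X_i$, as well as the worst‑case objective $\max_{\bm\xi_i\in\Xi_i,\,\bm\zeta_\Ni\in\mc X_\Ni}J_i(\bm x_i,\bm u_i)$, are matched automatically. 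Since for fixed $\{\mc X_i\}$ the problems decouple over $i$, it is enough to argue agent by agent.

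\emph{From $\bm\psi_i$ to $\bm\Psi_i$.} Fix a feasible $\bm\psi_i$ and run its closed loop. By induction on $t$, using the per‑stage dynamics $x_{i,t+1}=A_{i,t}x_{i,t}+B_{i,t}\zeta_{\Ni,t}+D_{i,t}u_{i,t}+E_{i,t}\xi_{i,t}$, the state $x_{i,t}$ is a function of $(\bm\xi_i^{t-1},\bm\zeta_\Ni^{t-1})$ and of the known $x_{i,1}$, and hence $u_{i,t}=\psi_{i,t}(\bm x_i^t,\bm\zeta_\Ni^t)$ is a function of $(\bm\xi_i^{t-1},\bm\zeta_\Ni^{t})$ (for $t=1$ it depends only on $\zeta_{\Ni,1}$, since $\bm\xi_i^0$ is empty). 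Define $\Psi_{i,t}$ to be this composed map; then $\bm\Psi_i$ produces the same $(\bm x_i,\bm u_i)$ as $\bm\psi_i$ for every $(\bm\xi_i,\bm\zeta_\Ni)$, so it is feasible for \eqref{Decentralizedb} with the same objective. This direction does not use the rank hypothesis.

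\emph{From $\bm\Psi_i$ to $\bm\psi_i$.} Here we invert the dynamics. Because $E_{i,t}$ has full column rank it admits a left inverse $E_{i,t}^{\dagger}$, so whenever the dynamics hold, $\xi_{i,t}=E_{i,t}^{\dagger}\big(x_{i,t+1}-A_{i,t}x_{i,t}-B_{i,t}\zeta_{\Ni,t}-D_{i,t}u_{i,t}\big)$. Induct on $t$: at $t=1$ set $\psi_{i,1}(\bm x_i^1,\bm\zeta_\Ni^1):=\Psi_{i,1}(\zeta_{\Ni,1})$, after which $\xi_{i,1}$ is an explicit function of $(\bm x_i^2,\bm\zeta_\Ni^1)$; assuming that $\xi_{i,s}$ for all $s\le t-1$ has been written as a function of $(\bm x_i^t,\bm\zeta_\Ni^{t-1})$ (with the intervening $u_{i,s}$ substituted recursively), substitution into $u_{i,t}=\Psi_{i,t}(\bm\xi_i^{t-1},\bm\zeta_\Ni^t)$ yields a function of $(\bm x_i^t,\bm\zeta_\Ni^t)$, which we take as $\psi_{i,t}$, and then the identity above expresses $\xi_{i,t}$ as a function of $(\bm x_i^{t+1},\bm\zeta_\Ni^t)$, closing the induction. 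Feeding the closed‑loop trajectory back into the reconstruction formulas returns the true uncertainties, so $\bm\psi_i$ reproduces the same $(\bm x_i,\bm u_i)$ as $\bm\Psi_i$ for every $(\bm\xi_i,\bm\zeta_\Ni)$; feasibility for \eqref{Decentralized} and equality of objectives follow.

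The only slightly delicate point is the bookkeeping of the timing indices: one must check that $\bm x_i^t$ encodes exactly $\bm\xi_i^{t-1}$ (strict causality in $\bm\xi_i$), that $\bm\zeta_\Ni^t$ passes through unchanged (causality in the beliefs), and that the $\bm\Psi_i\to\bm\psi_i$ construction is self‑consistent along trajectories even though $\psi_{i,t}$ is defined on all of $\mb R^{\hat n_{x,i}^t}$, including unreachable state histories. The full‑column‑rank hypothesis on $E_{i,t}$ is precisely what makes the second direction go through, exactly as in the centralized case.
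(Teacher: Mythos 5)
Your proposal is correct and follows essentially the same route as the paper's proof: an induction that composes the policy with the forward map $(\bm\xi_i^{t-1},\bm\zeta_\Ni^{t-1})\mapsto\bm x_i^t$ in one direction, and with the inverse map recovering $\bm\xi_i^{t-1}$ from $(\bm x_i^t,\bm\zeta_\Ni^{t-1})$ via the left inverse of the full-column-rank $E_{i,t}$ in the other. The feasibility and objective-value matching then follow exactly as in the paper, since the two problems share the same pointwise constraints and objective.
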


The key difference between the partially nested information \PSCs and the local information \PDs is that the synthesis phase of the latter requires that each agent communicates only with its direct neighbors rather than with all its precedent agents in the network (compare Figure \ref{fig::InfStr_NS} to Figure \ref{fig::InfStr_DS} for the working example). This minimum communication exchange is sufficient to address \PDs since the coupling among agents in the network only appears through sets $ \mc X_i $. This however introduces a level of conservativeness which is formalized in the following theorem. 

\begin{theorem}\label{thm::3}
	Problem~\eqref{Decentralizedb} is a conservative approximation of Problem~\eqref{Semi-Centralizedb} in the following sense: every feasible solution of Problem~\eqref{Decentralizedb} is feasible in Problem~\eqref{Semi-Centralizedb}, and the optimal value of Problem~\eqref{Decentralizedb} is equal or larger than the optimal value of Problem~\eqref{Semi-Centralizedb}.
\end{theorem}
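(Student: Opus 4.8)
The plan is to start from an arbitrary feasible tuple $\{(\bm\Psi_i,\mc X_i)\}_{i\in\mc M}$ of Problem~\eqref{Decentralizedb} and build from it a feasible policy tuple $\{\bm\Phi_i\}_{i\in\mc M}$ for Problem~\eqref{Semi-Centralizedb} whose objective value is no larger, which yields both assertions at once. The construction is to ``close the loop'' on the belief variables: in the partially nested world the actual neighbor trajectories are available, so I would feed them in place of the beliefs. Concretely, define the closed-loop quantities by the forward-in-time recursion $\bar x_{i,1}=x_{i,1}$, $\bar u_{i,t}=\Psi_{i,t}\big(\bm\xi_i^{t-1},[\bar{\bm x}_j^t]_{j\in\mc N_i}\big)$, and $\bar x_{i,t+1}=A_{i,t}\bar x_{i,t}+B_{i,t}[\bar x_{j,t}]_{j\in\mc N_i}+D_{i,t}\bar u_{i,t}+E_{i,t}\xi_{i,t}$. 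This recursion is well posed even when the physical graph has cycles, since within a stage all states are fixed before any input is computed, so there is no algebraic loop. A routine induction on $t$ shows that $\bar x_{i,t}$ depends on the uncertainty only through $\bm\xi_{\ol{\mc N}_i}^{t-1}$; combined with the inclusion $\ol{\mc N}_j\subseteq\ol{\mc N}_i$ for $j\in\mc N_i$ (direct neighbors, and their precedents, are precedents of $i$), the map $\Phi_{i,t}(\bm\xi_{\ol{\mc N}_i}^{t-1}):=\Psi_{i,t}\big(\bm\xi_i^{t-1},[\bar{\bm x}_j^t(\bm\xi_{\ol{\mc N}_i})]_{j\in\mc N_i}\big)$ is a well-defined strictly causal partial-nested uncertainty-feedback policy, and by construction $\bar{\bm x}_i=f_i(\bar{\bm x}_{\mc N_i},\bar{\bm u}_i,\bm\xi_i)$.

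The crux is the claim that, for every $\bm\xi_\Mu\in\Xi_\Mu$, the closed-loop state trajectories satisfy $\bar{\bm x}_i\in\mc X_i$ for all $i\in\mc M$. Granting this, everything else is immediate. For feasibility: fixing a realization $\bm\xi_{\ol{\mc N}_i}$, the pair $(\bar{\bm x}_i,\bar{\bm u}_i)$ coincides with the decentralized pair evaluated at $(\bm\xi_i,\bm\zeta_{\mc N_i})=(\bm\xi_i,[\bar{\bm x}_j]_{j\in\mc N_i})$, which lies in $\Xi_i\times\mc X_{\mc N_i}$ by the claim, so $(\bar{\bm x}_i,\bar{\bm u}_i)\in\mc O_i$ follows from the robust constraint of \eqref{Decentralizedb}; hence $\{\bm\Phi_i\}$ is feasible in \eqref{Semi-Centralizedb}. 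For the objective: by the same observation $J_i(\bar{\bm x}_i,\bar{\bm u}_i)\le\max_{\bm\xi_i\in\Xi_i,\ \bm\zeta_{\mc N_i}\in\mc X_{\mc N_i}}J_i(\bm x_i,\bm u_i)$ for every realization, so taking the maximum over $\bm\xi_{\ol{\mc N}_i}\in\Xi_{\ol{\mc N}_i}$ bounds the $i$-th term of the objective of \eqref{Semi-Centralizedb} by the $i$-th term of \eqref{Decentralizedb}; summing over $i$ and then taking the infimum over the feasible set of \eqref{Decentralizedb} shows its optimal value is at least that of \eqref{Semi-Centralizedb}.

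To establish the claim I would use the fact that feasibility of \eqref{Decentralizedb} says exactly that $\mc T_i(\bm\zeta_{\mc N_i},\bm\xi_i):=f_i\big(\bm\zeta_{\mc N_i},\bm\Psi_i(\bm\xi_i,\bm\zeta_{\mc N_i}),\bm\xi_i\big)$ maps $\mc X_{\mc N_i}\times\Xi_i$ into $\mc X_i$, so that the product map $G:(\bm\chi_i)_i\mapsto\big(\mc T_i([\bm\chi_j]_{j\in\mc N_i},\bm\xi_i)\big)_i$ sends $\bigtimes_{i}\mc X_i$ (nonempty for any feasible solution) into itself. The closed-loop profile $(\bar{\bm x}_i)_i$ is a fixed point of $G$, and because the $(t{+}1)$-st block of the output of $\mc T_i$ depends only on the first $t$ blocks of its input, the iterate $G^{T+1}$ is a constant map equal to that fixed point; iterating $G$ from any element of $\bigtimes_i\mc X_i$ therefore lands on $(\bar{\bm x}_i)_i$ after at most $T{+}1$ steps, whence $\bar{\bm x}_i\in\mc X_i$. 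If one assumes the physical graph is acyclic, this can be replaced by a cleaner induction along a topological ordering of $\mc M$: agents with $\mc N_i=\emptyset$ satisfy $\bar{\bm x}_i=\mc T_i(\bm\xi_i)\in\mc X_i$ directly, and the inductive step is a single application of $\mc T_i(\mc X_{\mc N_i}\times\Xi_i)\subseteq\mc X_i$.

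I expect the claim to be the only real obstacle. It is the step where the decision-dependent set $\mc X_i$ --- rather than an exogenous uncertainty set --- enters, so the standard ``restrict the quantifier'' reasoning behind robust reformulations does not apply as a one-line substitution: one must first certify that the loop-closing beliefs are admissible, and that certification is itself the content of the claim. Ruling out self-reference when the coupling graph is cyclic is what forces the fixed-point (or topological) argument. The remaining items --- that $\Phi_{i,t}$ reads only $\bm\xi_{\ol{\mc N}_i}^{t-1}$, the inclusion $\ol{\mc N}_j\subseteq\ol{\mc N}_i$, and the fact that a maximum over a product uncertainty set factors through the block on which the integrand depends --- are routine bookkeeping.
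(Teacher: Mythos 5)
Your proposal is correct and follows the same overall route as the paper's proof: close the loop by substituting the actual closed-loop neighbor trajectories for the belief variables $\bm\zeta_{\Ni}$, observe that the resulting states depend on the uncertainty only through $\bm\xi_{\oNi}$ (using $\ol{\mc N}_j\subseteq\ol{\mc N}_i$ for $j\in\mc N_i$), define the partially nested policy by composition, and then transfer feasibility and the worst-case objective bound through the containment of the closed-loop trajectories in the forecast sets. The one place where you genuinely add something is the step you correctly flag as the crux: the paper's proof, in deriving its equation for $\widehat\chi_{i,t}$, simply asserts that the closed-loop trajectories $\bs{\widehat\chi}_i(\bm\xi_{\oNi})$ lie in $\mc X_i$ "because" they do, which as written is circular — for agent $i$ the robust constraint only guarantees membership in $\mc X_i$ when the neighbor inputs are already known to lie in $\mc X_{\Ni}$, and that is precisely what is being proved for the neighbors. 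Your fixed-point argument (the product map $G$ sends the nonempty set $\bigtimes_i\mc X_i$ into itself by feasibility, and strict causality in time makes $G^{T+1}$ constant and equal to the closed-loop profile) closes this gap cleanly and works for cyclic physical graphs, where the simpler topological-order induction you mention as an alternative would not apply. The only loose end, shared with the paper, is the degenerate case where some $\mc X_i$ is empty; it is worth a one-line remark that feasibility forces each $\mc X_i$ to be nonempty (or that the problem is vacuous otherwise) before invoking the self-map argument.
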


Theorem~\ref{thm::3} indicates that for a general network topology, Problem~\eqref{Decentralizedb} is a conservative approximation of Problem~\eqref{Semi-Centralizedb}. Nonetheless, there are specific topologies where the optimal values of these two problems coincide. This occurs when the local and partially nested information networks are the same. Such a scenario is realized when the underlying physical network is a directed acyclic graph with depth 1. By definition, this implies that it is  a directed acyclic bipartite network, as illustrated in Figure~\ref{fig::bipartite}. The result is formalized in Proposition~\ref{prop::1}.

\begin{figure*}[ht]
	\centering
	\includegraphics[width = 0.2\textwidth]{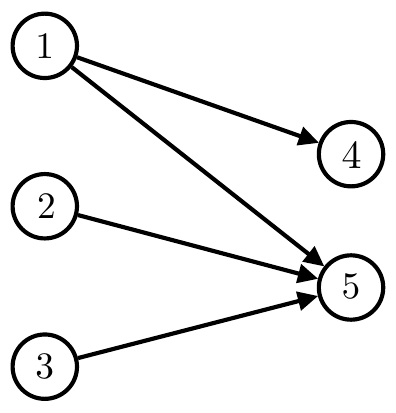}
	\caption{\textbf{Directed acyclic bipartite graph}: An example of a directed acyclic bipartite graph where $\ol{\mc N}_1 = \{1\}$, $\ol{\mc N}_2 = \{2\}$, $\ol{\mc N}_3 = \{3\}$, $\ol{\mc N}_4 = \{1,4\}$ and $\ol{\mc N}_5 = \{1,2,3,5\}$.}
	\label{fig::bipartite}
\end{figure*}

\begin{proposition}\label{prop::1}
	If the physical network is a  directed acyclic bipartite graph, 
	then Problem~\eqref{Decentralizedb}  coincides with problem  Problem~\eqref{Semi-Centralizedb} in the following sense: feasible solutions in Problem~\eqref{Decentralizedb} are mapped to feasible solutions in Problem~\eqref{Semi-Centralizedb}, and their optimal values are equal.
\end{proposition}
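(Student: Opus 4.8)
One direction is already available: the forward feasibility mapping and the bound ``optimal value of \eqref{Decentralizedb} $\ge$ optimal value of \eqref{Semi-Centralizedb}'' are exactly \cref{thm::3}. The plan is thus to prove the reverse bound by constructing, from an arbitrary feasible solution $\{\bm\Phi_i\}_{i\in\mc M}$ of \eqref{Semi-Centralizedb}, a feasible solution $\{\bm\Psi_i,\mc X_i\}_{i\in\mc M}$ of \eqref{Decentralizedb} with the same objective value; equality of optimal values follows at once.

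First I would extract the consequences of the graph hypothesis. A directed acyclic bipartite graph splits $\mc M$ into \emph{sources} ($\mc N_i=\emptyset$) and \emph{sinks} (every neighbor is a source). Hence a source $i$ has no precedent agents and a sink $i$ has precedent agents equal to its neighbors (those being sources, they carry no further precedents), so in all cases $\ol{\mc N}_i=\{i\}\cup\mc N_i$. Consequently the policy $\bm\Phi_i$ in \eqref{Semi-Centralizedb} is a function of exactly $(\bm\xi_i,\bm\xi_{\Ni})$, the same index pattern as the information $(\bm\xi_i,\bm\zeta_{\Ni})$ driving $\bm\Psi_i$ in \eqref{Decentralizedb}.

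Next I would build the state forecast sets of the source agents and then everything else. For a source $j$ the dynamics read $\bm x_j=A_j x_{j,1}+D_j\bm\Phi_j(\bm\xi_j)+E_j\bm\xi_j$; since $\bm\Phi_j$ is strictly causal and each $E_{j,t}$ has full column rank, one solves stage by stage for $\xi_{j,t}$ in terms of $(x_{j,t+1},x_{j,t},u_{j,t})$, so the map $\bm\xi_j\mapsto\bm x_j$ is a bijection from $\Xi_j$ onto its image with a \emph{causal} inverse $g_j$ (the first $t$ state blocks determine $\bm\xi_j^{t-1}$). I would set $\mc X_j:=\{\bm x_j(\bm\xi_j):\bm\xi_j\in\Xi_j\}$ (compact, a polytope under the affine restriction, hence admissible with $\mc S_j=\mc X_j$), put $\bm\Psi_j:=\bm\Phi_j$, and for each sink $i$ define
\begin{equation*}
  \bm\Psi_i(\bm\xi_i,\bm\zeta_{\Ni}):=\bm\Phi_i\big(\bm\xi_i,(g_j(\bm\zeta_j))_{j\in\mc N_i}\big),\qquad \mc X_i:=\big\{\,\bm x_i(\bm\xi_i,\bm\zeta_{\Ni}):\bm\xi_i\in\Xi_i,\ \bm\zeta_{\Ni}\in\mc X_{\Ni}\,\big\}.
\end{equation*}
Causality of $g_j$ ensures $\Psi_{i,t}$ depends only on $\bm\xi_i^{t-1}$ and $\bm\zeta_{\Ni}^{t}$, as \eqref{Decentralizedb} demands.

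The remaining verification is bookkeeping: as $\bm\zeta_{\Ni}$ ranges over $\mc X_{\Ni}=\bigtimes_{j\in\mc N_i}\mc X_j$ the recovered $(g_j(\bm\zeta_j))_{j\in\mc N_i}$ range bijectively over $\Xi_{\Ni}$, and each $\bm\zeta_j$ \emph{is} the true trajectory $\bm x_j(g_j(\bm\zeta_j))$ of source $j$; hence the trajectory $\bm x_i=f_i(\bm\zeta_{\Ni},\bm\Psi_i(\bm\xi_i,\bm\zeta_{\Ni}),\bm\xi_i)$ produced by \eqref{Decentralizedb} coincides with that of \eqref{Semi-Centralizedb} at $\bm\xi_{\oNi}=(\bm\xi_i,(g_j(\bm\zeta_j))_{j\in\mc N_i})$. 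This delivers $(\bm x_i,\bm u_i)\in\mc O_i$ and $\bm x_i\in\mc X_i$ for all admissible $(\bm\xi_i,\bm\zeta_{\Ni})$ and makes agent $i$'s worst-case objective term equal in the two problems; the construction is internally consistent since only sinks consume forecast sets and they consume exactly the $\mc X_j$ of their source neighbors. Summing over $i$ gives equal objective values, hence the reverse bound and the claim. I expect the main obstacle to be the causal invertibility of the source-agent state map and the attendant check that $\bm\Psi_i$ inherits the prescribed information pattern (strictly causal in $\bm\xi_i$, causal in $\bm\zeta_{\Ni}$); both rest on the full-column-rank assumption on $E_{j,t}$ together with the strict causality of the partially nested policies, with a little extra care needed for compactness of $\mc X_j$ if one argues beyond the affine restriction.
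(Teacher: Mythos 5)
Your proof is correct and follows essentially the same route as the paper's: one direction is Theorem~\ref{thm::3}, and for the converse you split the bipartite network into first-layer (source) and second-layer (sink) agents, take each source's forecast set to be the reachable set of its trajectories under the fixed partially nested policy, and observe that a sink's worst case over $\mc X_{\Ni}$ then coincides with its worst case over $\Xi_{\oNi}$. The only cosmetic difference is that you carry out the construction directly on the uncertainty-feedback problems \eqref{Semi-Centralizedb} and \eqref{Decentralizedb} via the explicit causal inverse $g_j$ (re-deriving part of the machinery of Theorem~\ref{thm::2}), whereas the paper performs the same construction on the state-feedback problems \eqref{Semi-Centralized} and \eqref{Decentralized} and then invokes the established state/uncertainty-feedback equivalences.
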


In contrast to Problems~\eqref{BothCentralized} and \eqref{BothPartiallyNested} in which  the structure of the problem is a multistage robust optimization \emph{with exogenous uncertainty}, Problem~\eqref{Decentralized} falls in the category of  multistage robust problem \emph{with endogenous uncertainty} also referred to as problems with \emph{decision-dependent uncertainty sets} \cite{Nohadani2017,Nohadani2018,Spacey2012}. Although sets $\mc X_{\Ni}$ do not model exogenous uncertainty, they affect the dynamics of the corresponding agent in an adversarial manner.
Problem~\eqref{Decentralizedb} is computationally intractable because $ (i) $ the optimization of the policies is performed over the infinite space of strictly causal functions; $ (ii) $ the optimization of the state forecast sets $ \mc X_i $ is performed over arbitrary sets, and $ (iii) $ the constraints must be satisfied robustly for every uncertain realization. In the next section, we propose tractable approximations making the problem amenable to off-the-shelf optimization algorithms.

We conclude this section by discussing how the optimal policies will be implemented in practice, highlighting the main differences between centralized, partially nested, and local information exchange problems.
Assume that we have solved Problems~\eqref{Centralized}, \eqref{Semi-Centralized} and \eqref{Decentralized}, and we have obtained the optimal policies $\pi_i^\star(\bm x_\mathcal{M})$, $\bm \phi_{i}^\star(\bm x_\oNi)$ and $\bm {\psi}_{i}^\star(\bm x_i, \bm \zeta_{\Ni})$ for all agents $i\in\mathcal{M}$, respectively. Consider first the centralized information problem.	
At time $t$, all states $\bm x_{\mathcal{M}}^t = (\bm x_{\mathcal{M},1},\ldots, \bm x_{\mathcal{M},t})$ have been realized and communicated to all agents $\mathcal{M}$ in the system. Consequently, the input $u_{i,t} = \pi^\star_{i,t}(\bm x_{\mathcal{M}}^t)$ can be evaluated deterministically by each agent. 
Next, we examine the partially nested information problem. At time $t$, all states $\bm x_{\mathcal{M}}^t = (x_{\mathcal{M},1},\ldots, x_{\mathcal{M},t})$ have been realized. In contrast to the centralized information problem, however, each agent~$i$ receives just the states $\bm x_{\overline{\mathcal{N}}_i}^t$ from its precedent agents $ \ol{\mc N}_i$. As such, each agent $i$ can deterministically calculate the  input $u_{i,t} = \pi_{i,t}^\star(\bm x_{\ol{\mc N}_i}^t)$.
Finally, we study the local information problem. Similar to the previous cases, at time $t$, all states $\bm x_{\mathcal{M}}^t = (\bm x_{\mathcal{M},1},\ldots, \bm x_{\mathcal{M},t})$ have been realized. In this case, however, each agent~$i$ receives just the states $\bm x_{{\mathcal{N}}_i}^t$ from its neighboring agents $ \ol{\mc N}_i$.
Using this information, each agent $i$ can deterministically calculate the input $u_{i,t} = \psi^\star_{i,t}(\bm x_{i}^{t},\bm x_{\mathcal{N}_i}^t)$.
Notice that agent $i$ substitutes the uncertain vector $\bm \zeta_{\mathcal{N}_i}^t$ with the realized vector $\bm x_{\mathcal{N}_i}^t$ when calculating the input $u_{i,t}$.
Recall that $\mathcal{N}_i\subseteq\ol{\mc N}_i\subseteq\mathcal{M}$ for all $i\in\mathcal{M}$. This indicates that each agent in the local information exchange problem requires less communication to implement and calculate its input compared to the centralized and partially nested information exchange problems. An alternative implementation procedure typically referred to as \emph{rolling horizon}, only evaluates the policy at $t=1$ and applies the input. After each agent observes the realization of its uncertain vector $ \xi_{i,1}$, the  states of the period $t=2$ are deterministically determined, i.e., $x_{i,2}$, and communicated to the neighbors. The optimization problem is subsequently re-optimized and the process is repeated. 
The rolling horizon is typically used when the Problems~\eqref{Centralized},  \eqref{Semi-Centralized} and \eqref{Decentralized} are solved approximately as re-optimization has been observed to improve the solution quality.

\section{State forecast set and policy approximation} \label{sec::SolMethod}

In this section, we discuss appropriate restrictions to Problem~\eqref{Decentralizedb}  that allow obtaining a computationally tractable approximation. We start with the state forecast sets. For agent $i\in\mathcal{M}$, a natural choice is to restrict  $ \mc X_i $ to admit  convex conic structure. However, for arbitrary conic structures the problem is NP-hard even if we restrict the policies to  be static (or \emph{open loop} as denoted in the control literature). This is the case as the optimization problem reduces the class of problems with decision-depend uncertainty as discussed in \cite[Theorem 3.2]{Nohadani2018}. 

A simpler approximation is  to restrict  $ \mc X_i$ to  sets that can be represented through an affine transformation of a fixed set, in a similar spirit as \citep{Bitlisliouglu2017,Jaillet2016,Zhang2017}.  Considering agent $i$.  For a given convex conic compact set  $ \mc S_i \subseteq \mathbb{R}^{N_{x,i}}  $,  $ \mc X_i$ is restricted to 
\begin{subequations}\label{SetApproximation}
	\begin{equation}\label{eq::SFSv1_nonconvex}
		\begin{array}{l}
			\mc X_i(Y_i,z_i) = \Big\{\bs{x}_i \in\mathbb{R}^{N_{x,i}} \,:\, \exists \bm s_{i} \in \mc S_i \text{ s.t. } \bm x_{i} = Y_i \bm s_{i} + z_{i}\Big\}
		\end{array}
	\end{equation}
	where the decision variables that define the shape of the state forecast set is the positive semi-definite matrix $Y_i\in\mathbb{S}_+^{N_{x,i}}$ and vector $ z_{i}\in\mathbb{R}^{N_{x,i}}$.  $\mc S_i$ is expressed through  given matrices $ G_{i} \in \mathbb{R}^{\ell_{i} \times N_{x,i}} $, vectors $ g_{i} \in \mathbb{R}^{\ell_k }$ and convex cones $ \mc K_{i} $, as follows
	\begin{equation}
		\begin{array}{l}
			\mc S_i = \Big\{\bs{s}_{i} \in \mb R^{N_{x,i}}\;:\;  G_{i}  \bm s_{i} \preceq_{\mc K_{i}}  g_{i} \Big\}.
		\end{array}
	\end{equation}
\end{subequations}
Matrix $Y_{i}$ allows scaling and rotation of set $\mathcal{S}_i$, while vector $z_i$ is responsible for translating $\mathcal{S}_i$. 

In the following, we denote by $ \mc F_{\AT}(\mc S_i) $ the field of bounded convex conic sets that can be represented through an affine transformation of a fixed set $ \mc S_i $ given in equation~\eqref{SetApproximation}. Restricting 
$\mc{F}(\mc S_i)$ to  $ \mc F_{\AT}(\mc S_i) $ in Problem~\eqref{Decentralizedb} gives rise to the following optimization problem:
\begin{subequations}
	\begin{equation}\label{DecentralizedXab_GeneralY}
		\begin{array}{@{}l}
			\text{~minimize~~} \displaystyle\sum_{i = 1}^M \max\limits_{\bm \xi_i \in \Xi_i, \bm \zeta_{\Ni} \in {\mc X}_{\Ni}(Y_i,z_i)} J_i(\bm x_i,\bm u_i) \\
			\left.\begin{array}{@{}r@{~~}l@{}}
				\text{subject to}
				&  \bm u_i = \bm \Psi_i(\bm \xi_i, \bm \zeta_{\Ni})  := [\Psi_{i,t}(\bm \xi_i^{t-1},\bm \zeta_\Ni^{t})]_{t\in\mc T} \\
				& Y_i\in\mathbb{S}_+^{N_{x,i}},\, z_{i}\in\mathbb{R}^{N_{x,i}}\\
				& \bm x_{i} = f_{i}\big(\bm \zeta_{\Ni}, \bm u_{i}, \bm \xi_{i}\big)\\
				& (\bm x_{i}, \bm u_{i}) \in \mc O_i \\
				& \bm x_i \in {\mc X}_i(Y_i, z_i),\, {\mc X}_i(\cdot) \in \mc F_{\AT}(\mc S_i)
			\end{array}\right \rbrace \begin{array}{@{}l}
				\forall \bm \zeta_{\Ni} \in {\mc X}_{\Ni}(Y_\Ni, z_\Ni)\\
				\forall \bm \xi_i \in \Xi_i
			\end{array}\forall i \in \mc M,
		\end{array}
	\end{equation}
	where for each $i\in \mc M$ we define $ {\mc X}_{\Ni}(Y_{\Ni}, z_\Ni) = \bigtimes_{j \in \mc N_i} {\mc X}_j(Y_{j}, z_j) $. The optimization variables are policies $ \bm {\Psi}_{i}(\cdot) $,  matrices $ Y_i $ and vectors $ z_i $ for all $ i \in \mc M $. 
	Notice that by construction Problem~\eqref{DecentralizedXab_GeneralY} belongs in the class of problems with a decision-dependent uncertainty set, thus is in general hard to solve \citep{Nohadani2018}. 
	The affine restriction $ \mc F_{\AT}(\mc S_i) $ provides a conservative approximation of Problem~\eqref{Decentralized} which is stated without proof in the following corollary.
	\begin{corollary}
		\label{corr::1} Problem~\eqref{DecentralizedXab_GeneralY}  is a conservative approximation of Problem~\eqref{Decentralized} in the following sense: every feasible solution of Problem~\eqref{DecentralizedXab_GeneralY} is feasible in Problem~\eqref{Decentralized}, and the optimal value of Problem~\eqref{DecentralizedXab_GeneralY}  is equal or larger than the optimal value of Problem~\eqref{Decentralized}.
	\end{corollary}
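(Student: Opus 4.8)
The plan is to obtain the corollary directly from two observations. First, Problem~\eqref{DecentralizedXab_GeneralY} is nothing but Problem~\eqref{Decentralizedb} in which the admissible class of state forecast sets has been shrunk from $\mc F(\mc S_i)$ to $\mc F_{\AT}(\mc S_i)$, with the forecast set of each agent additionally pinned to the explicit parametrization $\mc X_i(Y_i,z_i)$ of \eqref{eq::SFSv1_nonconvex}. Second, Theorem~\ref{thm::2} states that Problems~\eqref{Decentralized} and \eqref{Decentralizedb} are equivalent in feasibility and objective value through the one-to-one correspondence between state/uncertainty and uncertainty feedback policies. Hence it suffices to show that every solution feasible in \eqref{DecentralizedXab_GeneralY} induces a solution feasible in \eqref{Decentralizedb} with the same objective, and then to pass through Theorem~\ref{thm::2} to land in \eqref{Decentralized}.

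Concretely, I would take an arbitrary feasible point of \eqref{DecentralizedXab_GeneralY} --- uncertainty feedback policies $\bm\Psi_i(\cdot)$, matrices $Y_i\in\mathbb{S}_+^{N_{x,i}}$, vectors $z_i\in\mathbb R^{N_{x,i}}$ for $i\in\mc M$ --- and set $\mc X_i := \mc X_i(Y_i,z_i)$. Because $\mc S_i$ is convex, conic-representable and compact and $Y_i$ acts as a bounded (positive semidefinite) linear map, each $\mc X_i$ is a nonempty, convex, compact subset of $\mathbb R^{N_{x,i}}$, hence an admissible state forecast set for \eqref{Decentralizedb}, i.e., $\mc X_i\in\mc F(\mc S_i)$; equivalently $\mc F_{\AT}(\mc S_i)\subseteq\mc F(\mc S_i)$. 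Since $\mc X_{\Ni}(Y_{\Ni},z_{\Ni})=\bigtimes_{j\in\mc N_i}\mc X_j(Y_j,z_j)=\mc X_{\Ni}$, the dynamics $\bm x_i=f_i(\bm\zeta_{\Ni},\bm u_i,\bm\xi_i)$, the operational constraints $(\bm x_i,\bm u_i)\in\mc O_i$ and the inclusion $\bm x_i\in\mc X_i$ hold over precisely the same ranges $\bm\zeta_{\Ni}\in\mc X_{\Ni}$, $\bm\xi_i\in\Xi_i$ as the corresponding constraints of \eqref{Decentralizedb}, and the inner maxima defining the two objectives coincide term by term. Thus $(\bm\Psi_i,\mc X_i)_{i\in\mc M}$ is feasible in \eqref{Decentralizedb} with the same objective value, and Theorem~\ref{thm::2} then yields a feasible state/uncertainty feedback policy for \eqref{Decentralized} with that same value. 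Taking the infimum over the feasible set of \eqref{DecentralizedXab_GeneralY}, which thereby embeds objective-preservingly into the feasible set of \eqref{Decentralized}, gives the claimed inequality between the optimal values.

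The only genuinely delicate point --- and where I would be careful in writing it up --- is the bookkeeping around the set classes: Problem~\eqref{Decentralized} is stated with $\mc X_i\in\mc F(\mc S_i)$ for a set $\mc S_i\subseteq\mathbb R^{N_{x,i}}$, whereas the affine-transformation restriction in \eqref{SetApproximation} re-uses the symbol $\mc S_i$ for the fixed base set, and an affine image $Y_i\mc S_i+z_i$ need not be contained in $\mc S_i$. One must therefore make explicit that the inclusion $\mc F_{\AT}(\mc S_i)\subseteq\mc F(\mc S_i)$ is to be understood with the ambient set in the target class taken large enough --- naturally $\mathbb R^{N_{x,i}}$ --- so that all affine images under consideration are admissible. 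Once that identification is fixed, the remaining steps (that $\mc X_i(Y_i,z_i)$ inherits compactness and convexity from $\mc S_i$ and $Y_i\succeq0$, and that every constraint and objective term transfers unchanged) are routine, which is why the corollary can be stated without a separate proof.
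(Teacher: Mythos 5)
Your argument is correct and is exactly the reasoning the paper has in mind: the corollary is stated there without proof precisely because Problem~\eqref{DecentralizedXab_GeneralY} is Problem~\eqref{Decentralizedb} with the admissible set class shrunk from $\mc F(\mc S_i)$ to $\mc F_{\AT}(\mc S_i)$, and Theorem~\ref{thm::2} transports feasibility and objective values between Problems~\eqref{Decentralized} and \eqref{Decentralizedb}. Your closing remark about needing the ambient set in $\mc F(\mc S_i)$ to be taken as $\mb R^{N_{x,i}}$ so that the affine images $Y_i\mc S_i+z_i$ are admissible is a fair point about the paper's notation rather than a gap in your proof.
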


	By taking advantage of the simple structure of \eqref{SetApproximation} we can reformulate the problem as a multistage robust optimization problem with decision-independent uncertainty sets. In the spirit of \citep{Bitlisliouglu2017,Jaillet2016,Zhang2017}, we design strictly  causal feedback policies $ \Gamma_{i,t}:\mb R^{\hat n_{i}^t} \rightarrow \mb R^{n_{u,i}} $ such that the control input at each time step is given by $ u_{i,t} = \Gamma_{i,t}(\bm \xi_i^{t-1}, \bm s^t_\Ni) $.  We write $ \bm \Gamma_i(\bm \xi_i, \bm s_{\Ni})  = [\Gamma_{i,t}(\bm \xi_i^{t-1}, \bm s^t_\Ni)]_{t\in\mc T} $ to denote the policy concatenation over time. In this context, the counterpart of Problem~\eqref{DecentralizedXab_GeneralY} is given~as
	\begin{equation}\label{DecentralizedFinal_GeneralY}
		\begin{array}{@{}l}
			\text{~minimize~~} \displaystyle\sum\limits_{i = 1}^M \max\limits_{\bm \xi_i \in \Xi_i, \bm s_{\Ni} \in \mc S_{\Ni}} J_i(\bm x_i,\bm u_i) \\
			\left.\begin{array}{@{}r@{~}l@{}}
				\text{subject to }
				& \bm u_i = \bm \Gamma_i(\bm \xi_i, \bm s_{\Ni})  := [\Gamma_{i,t}(\bm \xi_i^{t-1}, \bm s^t_\Ni)]_{t\in\mc T} \\
				& Y_i\in\mathbb{S}_+^{N_{x,i}},\, z_{i}\in\mathbb{R}^{N_{x,i}}\\
				& \bm \zeta_{\Ni} = Y_\Ni \bm s_\Ni + z_\Ni\\ 
				& \bm x_{i} = f_{i}\big(\bm \zeta_{\Ni}, \bm u_{i}, \bm \xi_{i}\big)\\
				& (\bm x_{i}, \bm u_{i}) \in \mc O_i \\
				& \bm x_i \in {\mc X}_i(Y_i, z_i),\, {\mc X}_i(\cdot) \in \mc F_{\AT}(\mc S_i)
			\end{array}\right \rbrace \begin{array}{@{}l}
				\forall \bm s_{\Ni} \in \mc S_{\Ni}\\
				\forall \bm \xi_i \in \Xi_i
			\end{array}\forall i \in \mc M,
		\end{array}
	\end{equation}
\end{subequations}
where $ \mc S_{\Ni} = \bigtimes_{j \in \mc N_i} \mc S_j $, and with a slight abuse of notation  $ Y_\Ni \bm s_\Ni + z_\Ni = \left[ Y_i \bm s_{j} + z_j \right]_{j \in \mc N_i} $ for each $ i \in \mc M $. The decision variables are $ \bm \Gamma_i(\cdot) $, $ Y_i $ and $ z_i $ for all $ i \in \mc M $. Problem~\eqref{DecentralizedFinal_GeneralY} can be classified as a multistage robust optimization problem with exogenous uncertainty~\citep{Ben2004}. 

In the following, we investigate the relationship between Problem~\eqref{DecentralizedXab_GeneralY} and  Problem~\eqref{DecentralizedFinal_GeneralY}. To do so, we first define the linear mapping $ R_{i,t}: \mb R^{n_{x,i}} \rightarrow \mb R^{n_{x,i}} $, $ s_{i,t} \mapsto x_{i,t} $, as,
\begin{subequations}\label{app::maps}
	\begin{equation}\label{app::map1}
		R_{i,t}(s_{i,t}) = Y_{i,t} s_{i,t} + z_{i,t},
	\end{equation}
	and the linear mapping, $ L_{i,t}: \mb R^{n_{x,i}} \rightarrow \mb R^{n_{x,i}} $, $ x_{i,t} \mapsto s_{i,t} $, as,
	\begin{equation}\label{app::map2}
		L_{i,t}(x_{i,t}) = Y_{i,t}^{+}(x_{i,t} -z_{i,t})
	\end{equation}
\end{subequations}
where $  Y^+_{i,t} := (Y_{i,t}^\top Y_{i,t})^{-1}Y^\top_{i,t} $ is the pseudo-inverse of the positive semi-definite matrix $ Y_{i,t} $.  Note that the $ L_{i,t} $ may not be unique because of the pseudo-inverse $ Y^+_{i,t} $. Moreover, $ R_{i,t} $ can be viewed as a ``left inverse'' of the operator $ L_{i,t} $, i.e., it satisfies $ R_{i,t}\big(L_{i,t}(x_{i,t})\big) = x_{i,t}  $. Using \eqref{app::maps}, the following theorem establishes equivalence between the two optimization problems.

\begin{theorem} \label{thm::5}
	Problem~\eqref{DecentralizedXab_GeneralY} is equivalent to Problem~\eqref{DecentralizedFinal_GeneralY} in the following sense: feasible solutions in Problem~\eqref{DecentralizedXab_GeneralY} are mapped to feasible solutions in Problem~\eqref{DecentralizedFinal_GeneralY}, and their optimal values are equal.
\end{theorem}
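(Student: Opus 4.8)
The plan is to prove the equivalence by a change of variables driven by the affine relation $\bm\zeta_\Ni = Y_\Ni\bm s_\Ni + z_\Ni$, i.e.\ by the map $R_\Ni$ built stagewise from the $R_{i,t}$ of \eqref{app::maps}. The crucial structural observation is that, by definition \eqref{eq::SFSv1_nonconvex}, the decision-dependent uncertainty set $\mc X_\Ni(Y_\Ni,z_\Ni)$ appearing in Problem~\eqref{DecentralizedXab_GeneralY} is exactly the image $R_\Ni(\mc S_\Ni)$ of the fixed set $\mc S_\Ni$ used in Problem~\eqref{DecentralizedFinal_GeneralY}. Since the two formulations share the same variables $Y_i,z_i$ and these will be left untouched by the transformation, and since, for each agent $i$, its constraint block and its inner worst-case objective involve only $\bm\xi_i$, the neighbors' beliefs/samples, its own policy, and $(Y_j,z_j)_{j\in\mc N_i\cup\{i\}}$, it suffices to show that a feasible point of one problem can be turned into a feasible point of the other, agent by agent, with the same objective value; the two resulting inequalities between optimal values then give the claimed equality.

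For the direction \eqref{DecentralizedXab_GeneralY}$\to$\eqref{DecentralizedFinal_GeneralY}, take a feasible tuple $(\bm\Psi_i,Y_i,z_i)_{i\in\mc M}$ and set $\bm\Gamma_i(\bm\xi_i,\bm s_\Ni):=\bm\Psi_i\big(\bm\xi_i,\,R_\Ni(\bm s_\Ni)\big)$, keeping $Y_i,z_i$. Because $R_{i,t}$ acts stage by stage, $\bm\Gamma_i$ inherits from $\bm\Psi_i$ the required causality pattern (strictly causal in $\bm\xi_i$, causal in the neighbor variable). For every $\bm s_\Ni\in\mc S_\Ni$ the vector $\bm\zeta_\Ni:=R_\Ni(\bm s_\Ni)$ lies in $\mc X_\Ni(Y_\Ni,z_\Ni)$, so the dynamics, the operational constraints $\mc O_i$ and the containment $\bm x_i\in\mc X_i(Y_i,z_i)$ hold and $J_i$ takes the same value; since $R_\Ni$ maps $\mc S_\Ni$ onto $\mc X_\Ni(Y_\Ni,z_\Ni)$, the inner maxima over $\mc S_\Ni$ and over $\mc X_\Ni(Y_\Ni,z_\Ni)$ coincide, so the objective is preserved. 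Hence the optimal value of \eqref{DecentralizedFinal_GeneralY} is $\le$ that of \eqref{DecentralizedXab_GeneralY}.

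For the reverse direction, take a feasible $(\bm\Gamma_i,Y_i,z_i)_{i\in\mc M}$ and set $\bm\Psi_i(\bm\xi_i,\bm\zeta_\Ni):=\bm\Gamma_i\big(\bm\xi_i,\,L_\Ni(\bm\zeta_\Ni)\big)$ with $L_\Ni$ built from the stagewise maps $L_{i,t}$ of \eqref{app::maps}, again keeping $Y_i,z_i$; the stagewise structure of $L_{i,t}$ makes $\bm\Psi_i$ a legitimate causal policy. The key identity is that $R_{i,t}$ is a left inverse of $L_{i,t}$, so $R_\Ni\big(L_\Ni(\bm\zeta_\Ni)\big)=\bm\zeta_\Ni$ for every $\bm\zeta_\Ni\in\mc X_\Ni(Y_\Ni,z_\Ni)$. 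Consequently $\bm s_\Ni:=L_\Ni(\bm\zeta_\Ni)$ satisfies $Y_\Ni\bm s_\Ni+z_\Ni=\bm\zeta_\Ni$, so evaluating the relations of \eqref{DecentralizedFinal_GeneralY} at $(\bm\xi_i,\bm s_\Ni)$ reproduces exactly the relations of \eqref{DecentralizedXab_GeneralY} at $(\bm\xi_i,\bm\zeta_\Ni)$, carrying over feasibility and the value of $J_i$. One then checks that $\bm s_\Ni$ is admissible for \eqref{DecentralizedFinal_GeneralY}, after which the optimal value of \eqref{DecentralizedXab_GeneralY} is $\le$ that of \eqref{DecentralizedFinal_GeneralY}, and combining with the previous paragraph yields equality.

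The main obstacle is exactly this admissibility check in the reverse direction, which is where the positive \emph{semi}-definiteness of $Y_i$ bites: the affine map $\bm s_i\mapsto Y_i\bm s_i+z_i$ need not be injective, $L_i$ is defined through the pseudo-inverse $Y_i^+$ and is not unique, and one must verify both that the selected preimage $L_\Ni(\bm\zeta_\Ni)$ lands in $\mc S_\Ni$ (exploiting the structure of $\mc S_i$ together with the left-inverse property) and that it is built stagewise so the constructed $\bm\Psi_i$ respects causality. The identity $R_{i,t}\circ L_{i,t}=\mathrm{id}$ on $\mc X_i(Y_i,z_i)$ is the tool that makes the dynamics and constraints line up despite the non-injectivity; everything else — separability across agents, invariance of $\mc O_i$, of $\bm x_i\in\mc X_i(Y_i,z_i)$, and of $J_i$ under the substitution — is routine bookkeeping once the change of variables is set up.
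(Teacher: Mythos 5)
Your construction is essentially the paper's: Theorem~\ref{thm::5} is proved there by exactly the two substitutions $\wh{\Gamma}_{i,t}(\bm \xi_i^{t-1},\bm s_\Ni^{t})=\wt{\Psi}_{i,t}(\bm \xi_i^{t-1},[R^{t}_j(\bm s_j^{t})]_{j\in \mc N_i})$ and $\wt{\Psi}_{i,t}(\bm \xi_i^{t-1},\bm \zeta_\Ni^{t})=\wh{\Gamma}_{i,t}(\bm \xi_i^{t-1},[L^{t}_j(\bm \zeta_j^{t})]_{j\in \mc N_i})$ built from the maps \eqref{app::maps}, with the transfer of constraints and objective packaged into Lemma~\ref{lem::1}. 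The one step you flag but do not close --- that the selected preimage $L_\Ni(\bm\zeta_\Ni)$ actually lies in $\mc S_\Ni$ --- is precisely the content of implication \eqref{eq::opInv}, which the paper dismisses as following ``similar arguments'' to \eqref{eq::op}; but the two are not symmetric. The argument for \eqref{eq::op} only uses that $R_j$ maps $\mc S_j$ into $\mc X_j(Y_j,z_j)$, which holds by the very definition of the state forecast set, whereas \eqref{eq::opInv} needs $Y_{j,t}^{+}Y_{j,t}=I$: writing $\zeta_{j,t}=Y_{j,t}s_{j,t}+z_{j,t}$ with $s_{j,t}\in\mc S_{j,t}$, one gets $L_{j,t}(\zeta_{j,t})=Y_{j,t}^{+}Y_{j,t}s_{j,t}$, which is only the projection of a preimage onto the row space of $Y_{j,t}$ and need not belong to $\mc S_{j,t}$ when $Y_{j,t}$ is singular (indeed the formula $Y^{+}_{i,t}=(Y_{i,t}^\top Y_{i,t})^{-1}Y_{i,t}^\top$ already presupposes full column rank). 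So you are right to single this out as the crux; neither your sketch nor the paper discharges it in the degenerate case, and a clean fix is to restrict to invertible $Y_i\succ 0$ (or to note that $\bm\Gamma_i$ may, without loss of optimality, be taken to depend on $\bm s_\Ni$ only through $Y_\Ni\bm s_\Ni+z_\Ni$, in which case the choice of preimage is immaterial). With that caveat, the rest of your bookkeeping --- causality of the composed policies, agent-by-agent separability, and equality of the inner maxima because $R_\Ni$ maps $\mc S_\Ni$ onto $\mc X_\Ni(Y_\Ni,z_\Ni)$ --- matches the paper's proof line by line.
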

{\color{black}
	Policies $\bm {\Gamma}_{i} (\bm \xi_i, \bm s_{\Ni})$ are designed using \eqref{DecentralizedFinal_GeneralY}, however, in practice  agent $i$ will require policies $ \Psi_i(\bm \xi_i, \bm \zeta_{\Ni}) $ since  $\bm \zeta_{\Ni}$ will be the observed states of its neighbors, while $\bm s_{\Ni}$ is in some sense the mathematical construct that allows us to convexify the problem. Theorem~\eqref{thm::5} allows us to construct a mapping between the two policies. For agent $i$, given $ u_{i,t} = \Gamma_{i,t}(\cdot) $ as the optimal  policy  of \eqref{DecentralizedFinal_GeneralY}, then $ u_{i,t} = \Psi_{i,t}(\bm \xi_i^{t-1}, \bm \zeta_\Ni^t) = \Gamma_{i,t}(\bm \xi_i^{t-1},[ R_j^t(\bm \zeta_j^t)]_{j\in \mc N_i}) $ for all $\bm \zeta_{\Ni} \in \mc X_{\Ni},\, \bm \xi_i \in \Xi_i$ is an optimal policy in problem \eqref{DecentralizedXab_GeneralY}. One can go a step further and construct the optimal state feedback policy $\bm {\psi}_{i}(\bm x_i, \bm \zeta_{\Ni})$ in a similar spirit as in \citep{Hadjiyiannis2011,Goulart2006}, however, we omit this derivation in the interest of space. 
}

Despite the exogenous uncertainty structure of Problem~\eqref{DecentralizedFinal_GeneralY}, the problem is in general hard to solve due to the non-convex constraint~\eqref{eq::SFSv1_nonconvex}   as both $Y_i$ and $\bm s_i$ are optimization variables resulting in  bilinear terms. The problem can be efficiently approximated in practice by performing block-coordinate descent \citep{MR3444832} on $Y_i$ and $\bm s_i$ in a sequential manner until the parameters converge. In the following, we present an additional restriction that results in a linear formulation of the problem.  

Consider the case where $Y_i = y_i$ where $y_i\in\mathbb{R}_+$, thus \eqref{eq::SFSv1_nonconvex}  takes the form
\begin{subequations}\label{ConvexSetApproximation}
	\begin{equation}\label{eq::SFSv1_new}
		\begin{array}{l}
			\mc X_i(y_i,z_i) = \displaystyle\left\{\bs{x}_i \in\mathbb{R}^{N_{x,i}} \,:\, \exists \bm s_{i} \in \mc S_i \text{ s.t. } \bm x_{i} =  y_{i} \bm s_{i} + z_{i} \right\}
		\end{array}
	\end{equation}
	This restriction allows for the scaling of $\mc S_i$ but does not allow for the rotating of the set. By taking advantage of the fact that $ \mc S_i $ is compact,  $ \mc X_i(y_i,z_i) $ can be expressed as the following convex set. 
	\begin{equation}\label{eq::SFSv2_new}
		\mc {\widehat X}_i(y_i,z_i) = \displaystyle\left\{\bs{x}_i \,:\, \exists \bm  \nu_{i}\in\mathbb{R}^{N_{x,i}} \text{ s.t. }  \displaystyle\bm x_{i} = \bm \nu_{i,k} + z_{i},\;\;\\
		G_{i} \bm \nu_{i}  \preceq_{\mc K_{i}}  y_{i}  g_{i}\right\}
	\end{equation}
\end{subequations}
where $ \bm \nu_{i} $ are auxiliary variables. By replacing constraint $\bm x_i\in\mc X(y_i,z_i)$ in Problem~\eqref{DecentralizedFinal_GeneralY} with  $\bm x_i\in\wh{\mc X} (y_i,z_i)$ the optimization problem reduces to a multistage \emph{linear} optimization problem with exogenous uncertainty. The relationship between sets \eqref{eq::SFSv1_new} and \eqref{eq::SFSv2_new} is summarized in the following proposition. 
\begin{proposition}\label{prop::nCc}
	Set $ \mc X_{\FS} = \{(\bm x_i, y_i, z_i)\,:\, \bm x_i \in \mc X_i(y_i, z_i) \} $ is equivalent to $ \wh{\mc X}_{\FS} =\{(\bm x_i, y_i, z_i)\,:\, \bm x_i \in \mc {\widehat X}_i(y_i,z_i) \} $ in the following sense: there exist a unique mapping between feasible points in $ {\mc X}_{\FS} $~and~$\wh{\mc X}_{\FS} $.
\end{proposition}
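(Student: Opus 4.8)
The plan is to reduce the proposition to a pointwise set identity: for a fixed agent $i$ and any admissible pair $(y_i,z_i)$ with $y_i\in\mb R_+$ and $z_i\in\mb R^{N_{x,i}}$, I would show $\mc X_i(y_i,z_i)=\mc{\widehat X}_i(y_i,z_i)$. Once this holds for every $(y_i,z_i)$, the lifted sets $\mc X_{\FS}$ and $\wh{\mc X}_{\FS}$ are literally equal as subsets of $\mb R^{N_{x,i}}\times\mb R_+\times\mb R^{N_{x,i}}$, so the ``unique mapping between feasible points'' is simply the identity on the triple $(\bm x_i,y_i,z_i)$, and the proof is complete. The bijection between the auxiliary certificates is carried by the substitution $\bm\nu_i\leftrightarrow y_i\bm s_i$.

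To prove the inclusion $\mc X_i(y_i,z_i)\subseteq\mc{\widehat X}_i(y_i,z_i)$, I would take $\bm x_i\in\mc X_i(y_i,z_i)$ with a witness $\bm s_i\in\mc S_i$, i.e. $\bm x_i=y_i\bm s_i+z_i$ and $g_i-G_i\bm s_i\in\mc K_i$, and set $\bm\nu_i:=y_i\bm s_i$; then $\bm x_i=\bm\nu_i+z_i$ and $y_ig_i-G_i\bm\nu_i=y_i(g_i-G_i\bm s_i)\in\mc K_i$ since $y_i\ge 0$ and $\mc K_i$ is a convex cone, so $\bm x_i\in\mc{\widehat X}_i(y_i,z_i)$. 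For the reverse inclusion I would take $\bm x_i\in\mc{\widehat X}_i(y_i,z_i)$ with witness $\bm\nu_i$ obeying $\bm x_i=\bm\nu_i+z_i$ and $y_ig_i-G_i\bm\nu_i\in\mc K_i$. When $y_i>0$, setting $\bm s_i:=\bm\nu_i/y_i$ gives $g_i-G_i\bm s_i=y_i^{-1}(y_ig_i-G_i\bm\nu_i)\in\mc K_i$, hence $\bm s_i\in\mc S_i$ and $\bm x_i=y_i\bm s_i+z_i\in\mc X_i(y_i,z_i)$.

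The one genuinely delicate step — the main obstacle — is the degenerate case $y_i=0$, where the division above is not allowed. There the certificate constraint collapses to $-G_i\bm\nu_i\in\mc K_i$, which says exactly that $\bm\nu_i$ lies in the recession cone of $\mc S_i=\{\bm s_i:G_i\bm s_i\preceq_{\mc K_i}g_i\}$. Since $\mc S_i$ is nonempty and compact (as imposed in \eqref{SetApproximation}), this recession cone is $\{0\}$, forcing $\bm\nu_i=0$, so $\bm x_i=z_i=0\cdot\bm s_i+z_i$ for any $\bm s_i\in\mc S_i$ and thus $\bm x_i\in\mc X_i(0,z_i)$; conversely $\mc X_i(0,z_i)=\{z_i\}\subseteq\mc{\widehat X}_i(0,z_i)$ trivially. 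This is the precise point where compactness of $\mc S_i$ is needed, echoing the comment made just before \eqref{eq::SFSv2_new}. Combining the two inclusions over all $(y_i,z_i)$ gives $\mc X_i(y_i,z_i)=\mc{\widehat X}_i(y_i,z_i)$, hence $\mc X_{\FS}=\wh{\mc X}_{\FS}$, and the identity map is the asserted (trivially unique) bijection.
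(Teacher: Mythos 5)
Your proof is correct and follows essentially the same route as the paper's: the substitution $\bm\nu_i \leftrightarrow y_i\bm s_i$ for $y_i>0$, and for the degenerate case $y_i=0$ the observation that $-G_i\bm\nu_i\in\mc K_i$ places $\bm\nu_i$ in the recession cone of $\mc S_i$, which is $\{0\}$ by compactness. Your write-up is in fact slightly more explicit than the paper's (spelling out both inclusions and the conic-inequality manipulations), but there is no difference in the underlying argument.
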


\begin{remark}\label{remark1}
	Approximation \eqref{ConvexSetApproximation} can be very restrictive as it controls the scaling of $\mathcal X_i$ through a single  parameter, namely $y_i$. Regardless, the idea can be further extended to allow for the scaling of each (or groups) of the coordinate of $\mathcal X_i$ independently from the rest. Consider the case in which $\mathcal X_i$  can be represent as $\mathcal X_i = \mathcal X_{i,1}\times\cdots\times \mathcal X_{i,K_i}$ for some $K_i\leq N_{x,i}$. Then, for each $j\in\{1,\ldots,K_i\}$, we  restrict each $\mathcal X_{i,j}$ to admit a representation as in \eqref{ConvexSetApproximation}. For example, if $K_i = N_{x,i}$,  and setting $ \mathcal S_{i,j} = \{ s \in \mathbb R : | s | \leq 1 \} $ then we have 
	\begin{equation*}
		\begin{array}{{l@{\,}l}}
			\mc X_{i,j}(y_{i,j},z_{i,j}) &= \displaystyle\left\{\bs{x}_{i,j} \in\mathbb{R} \,:\, \exists s \in \mc S_{i,j} \text{ s.t. } \bm x_{i,j} =    y_{i,j} s + z_{i,j} \right\}\\
			&= \displaystyle\left\{\bs{x}_{i,j} \in\mathbb{R} \,:\,  z_{i,j} -y_{i,j} \leq \bm x_{i,j}\leq z_{i,j} +y_{i,j} \right\}\\
		\end{array}
	\end{equation*}
	thus allowing to control the width and position of the communication set for  each of the components of $\bs{x}_{i}$. This approximation is natural in many applications like the contract design example discuss in Section~\ref{supplychain}, in which this modeling choice is prescribed by the problem itself and does not constitute an additional approximation to the problem.
\end{remark}

As previously stated, problem~\eqref{DecentralizedFinal_GeneralY} can be classified as a multistage robust optimization problem.
There is a plethora of schemes that try to solve this class of problems either exactly using dynamic programming techniques \citep{Georghiou2019b}, or approximately  by  restricting the functional form of the admissible policies to admit prescribed structures, see \citep{Delage2015,Georghiou2019a} for a thorough survey on the topic. To simplify  exposition, in the remainder of the  paper we restrict the policies to admit an affine structure. This will also allow for direct comparisons with the work in \citep{Goulart2006} and \citep{Lin2016}, which we  discussed in Sections~\ref{centralized} and \ref{partiallynested}, respectively.

Problem~\eqref{DecentralizedFinal_GeneralY} retains the decoupled structure of \PDs since agent $ i $ only needs to communicate to its direct neighbors  $(Y_i,z_i) $ that control the shape of its state forecast set.  This loosely coupled structure allows employing distributed computation algorithms such as the alternating direction method of multipliers (ADMM) algorithm \citep[\S~7]{Boyd2004}. Such schemes will allow the agents to solve Problem~\eqref{DecentralizedFinal_GeneralY} in an almost decentralized manner, i.e., each agent solves the subproblem of Problem~\eqref{DecentralizedFinal_GeneralY} involving its own objective and constraints while seeking to achieve consensus for the variables $ y_i $ and  $ z_i $ with its neighbors.
In Appendix~\ref{supplychain} we illustrate how the ADMM algorithm can be implemented in a supply chain example.

We next demonstrate that if the physical network forms an arborescence, it is possible to explicitly establish an upper bound on the complexity of the approximation required for the optimal values of Problem \eqref{DecentralizedFinal_GeneralY} and Problem \eqref{Decentralizedb} to coincide. For reference, an arborescence, sometimes referred to as a directed rooted tree, is a directed network in which there is a unique directed path from a root node to any other node. To achieve this, we can adjust the approximation~\eqref{SetApproximation} to include not only scaling and rotation of $\mathcal{S}_i$ but also projections. This adjustment involves selecting a higher dimensional set $\mathcal{S}_i \subseteq \mathbb{R}^{\widetilde{N}_{x,i}}$ with $\widetilde{N}_{x,i} \geq N_{x,i}$ and allowing for rectangular matrix $Y_i \in \mathbb{R}^{N_{x,i} \times \widetilde{N}_{x,i}}$. Thus the affine mapping $\bm x_{i} = Y_i \bm s_{i} + z_{i}$ where $\bm s_{i} \in \mc S_i$  projects the high dimensional set $\mc S_i$ to the lower dimension of the state $\bm x_i$. A practical choice of $\mc S_i$ is  the simplex set since by increasing its dimension, one can systematically increase the complexity of the projected set. We note that other choices are also possible. The result is summarized in the following corollary. 

\begin{corollary}\label{cor::2}
	If the physical network is an  arborescence, then the optimal value of Problems~\eqref{Decentralizedb} and \eqref{DecentralizedFinal_GeneralY} coincide if approximation \eqref{SetApproximation} is constructed such that for each $i\in\mc M$ the set $\mathcal{S}_i$ is chosen to be the simplex of dimension  $\prod_{j\in \oNi} |\text{ext}(\Xi_j)|$.
\end{corollary}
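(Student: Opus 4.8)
The plan is to prove the two inequalities between the optimal values separately; the non‑trivial one is that restricting the forecast sets to $\mathcal{F}_{\AT}(\mathcal{S}_i)$ with the prescribed simplices costs nothing. One direction is immediate and holds for \emph{any} choice of the $\mathcal{S}_i$: by Theorem~\ref{thm::5}, Problem~\eqref{DecentralizedFinal_GeneralY} is equivalent to Problem~\eqref{DecentralizedXab_GeneralY}; by Corollary~\ref{corr::1}, the latter is a conservative approximation of Problem~\eqref{Decentralized}; and by Theorem~\ref{thm::2}, Problem~\eqref{Decentralized} is equivalent to Problem~\eqref{Decentralizedb}. Chaining these, the optimal value of Problem~\eqref{DecentralizedFinal_GeneralY} is at least that of Problem~\eqref{Decentralizedb}, so only the reverse inequality requires work. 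Two facts from the discussion preceding the corollary will be used throughout: the policies are now affine, and in the projection variant of~\eqref{SetApproximation} (rectangular $Y_i$) the family $\mathcal{F}_{\AT}(\mathcal{S}_i)$ produced by the prescribed simplex contains every polytope in $\mathbb{R}^{N_{x,i}}$ with at most $d_i := \prod_{j\in\overline{\mathcal{N}}_i}|\text{ext}(\Xi_j)|$ vertices, since an affine image of the simplex on $d_i$ vertices is precisely such a polytope.

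For the reverse inequality I would take an optimal solution $(\bm\Psi^\star_i,\mathcal{X}^\star_i)_{i\in\mathcal{M}}$ of Problem~\eqref{Decentralizedb}, turn it into a feasible solution of Problem~\eqref{DecentralizedXab_GeneralY} of the same cost, and transfer it to Problem~\eqref{DecentralizedFinal_GeneralY} by composing with the affine re‑parametrisations $\bm s\mapsto Y_{\mathcal{N}_i}\bm s+z_{\mathcal{N}_i}$ as in Theorem~\ref{thm::5}. Record first the tree structure: in an arborescence each non‑root agent $i$ has a unique preceding neighbour $p(i)$, so $\mathcal{N}_i=\{p(i)\}$, and its precedent set satisfies $\overline{\mathcal{N}}_i=\overline{\mathcal{N}}_{p(i)}\cup\{i\}$ with the union disjoint; the root $r$ has $\mathcal{N}_r=\emptyset$. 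Hence $d_i=|\text{ext}(\Xi_i)|\cdot d_{p(i)}$ and $d_r=|\text{ext}(\Xi_r)|$. Now process the agents in a topological order from the root down, keeping the affine policies $\bm\Psi^\star_i$ and replacing each forecast set by the reachable set
\[
\mathcal{R}_i:=\bigl\{\, f_i(\bm\zeta_{\mathcal{N}_i},\bm\Psi^\star_i(\bm\xi_i,\bm\zeta_{\mathcal{N}_i}),\bm\xi_i)\;:\;\bm\xi_i\in\Xi_i,\ \bm\zeta_{\mathcal{N}_i}\in\mathcal{X}'_{\mathcal{N}_i}\,\bigr\},
\]
computed with the \emph{already-replaced} parent set $\mathcal{X}'_{p(i)}$. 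Since each parent set was only shrunk, $\mathcal{R}_i\subseteq\mathcal{X}^\star_i$; hence in the modified tuple the constraints $(\bm x_i,\bm u_i)\in\mathcal{O}_i$ still hold, $\bm x_i\in\mathcal{R}_i$ holds by construction, and each worst‑case objective can only decrease, so the modified tuple is again feasible and optimal for Problem~\eqref{Decentralizedb}.

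The crux is the vertex count. Because $\bm\Psi^\star_i$ and $f_i$ are affine, the map $(\bm\xi_i,\bm\zeta_{\mathcal{N}_i})\mapsto\bm x_i$ is affine, so $\mathcal{R}_i$ is a polytope, being the affine image of $\Xi_i\times\mathcal{X}'_{p(i)}$ (and of $\Xi_r$ for the root). For the root, $\mathcal{R}_r$ therefore has at most $|\text{ext}(\Xi_r)|=d_r$ vertices, i.e.\ $\mathcal{X}'_r\in\mathcal{F}_{\AT}(\mathcal{S}_r)$. Inductively, if $\mathcal{X}'_{p(i)}$ has at most $d_{p(i)}$ vertices, then $\Xi_i\times\mathcal{X}'_{p(i)}$ has at most $|\text{ext}(\Xi_i)|\cdot d_{p(i)}=d_i$ vertices, and so does its affine image $\mathcal{R}_i=\mathcal{X}'_i$; thus $\mathcal{X}'_i\in\mathcal{F}_{\AT}(\mathcal{S}_i)$ for the prescribed simplex, and we may choose $Y_i,z_i$ with $\mathcal{X}_i(Y_i,z_i)=\mathcal{X}'_i$. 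The resulting tuple $(\bm\Psi^\star_i,Y_i,z_i)_{i\in\mathcal{M}}$ is feasible for Problem~\eqref{DecentralizedXab_GeneralY} — policies, dynamics, the constraints $(\bm x_i,\bm u_i)\in\mathcal{O}_i$, and $\bm x_i\in\mathcal{X}_i(Y_i,z_i)$ are all inherited from the modified optimal solution of Problem~\eqref{Decentralizedb} — with objective equal to the optimal value of Problem~\eqref{Decentralizedb}. Passing to Problem~\eqref{DecentralizedFinal_GeneralY} as above, the optimal value of Problem~\eqref{DecentralizedFinal_GeneralY} is at most that of Problem~\eqref{Decentralizedb}, and combined with the first direction the two optimal values coincide.

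The step I expect to be the main obstacle is the order‑sensitive bookkeeping of the set replacements: one must verify carefully that, carried out in the prescribed top‑down order, replacing each $\mathcal{X}^\star_i$ by $\mathcal{R}_i$ preserves feasibility \emph{and} does not raise the objective at any stage — in particular that maximising the convex objective over the smaller convex forecast set never exceeds the original worst case — so that the vertex‑count induction bounds the \emph{optimal} value rather than merely the value of some feasible point. A secondary caveat worth noting is that the argument rests entirely on affine policies: this is what makes $\bm x_i$ an affine function of the uncertainties and keeps the reachable sets polytopal with a controlled vertex count. Obtaining the same conclusion for general policies would additionally require lifting each $\Xi_j$ to its barycentric simplex and invoking an optimality result for affine decision rules over products of simplices, so that $\bm x_i$ becomes multiaffine (affine in each factor) in the lifted variables and its range still lies within the convex hull of its values at the $d_i$ extreme scenarios.
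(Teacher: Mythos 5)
Your overall architecture matches the paper's: one inequality follows by chaining Theorem~\ref{thm::5}, Corollary~\ref{corr::1} and Theorem~\ref{thm::2}, and the substantive inequality is obtained by showing that, on an arborescence, the optimal state forecast sets may be taken to be polytopes with at most $d_i=\prod_{j\in \oNi}|\text{ext}(\Xi_j)|$ vertices, which are exactly the affine images of the prescribed simplices. Your top-down replacement of each $\mathcal{X}_i^\star$ by the reachable set computed with the already-shrunk parent set, and the bookkeeping showing that this preserves feasibility and does not increase any worst-case objective, are correct and are not the real obstacle.

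The genuine gap is the one you flag and then set aside as a ``secondary caveat'': your vertex-count induction rests entirely on the policies being affine, whereas the corollary compares Problems~\eqref{Decentralizedb} and \eqref{DecentralizedFinal_GeneralY} with unrestricted non-anticipative policies, and the paper's proof does not invoke affinity. For a general $\bm\Psi_i^\star$ the reachable set $\mathcal{R}_i$ is not an affine image of $\Xi_i\times\mathcal{X}'_{p(i)}$ and need not be a polytope at all, so the bound ``at most $d_i$ vertices'' has no basis and the induction collapses. The paper closes this with a different ingredient: after linearizing $J_i$ via epigraph variables, Problem~\eqref{Decentralizedb} for fixed forecast sets is a multistage robust \emph{linear} program with right-hand-side uncertainty, and for this class the optimal value is unchanged if every ``for all'' quantifier is restricted to the extreme points of the uncertainty sets (the scenario-tree reduction of \cite{Georghiou2019b}). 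At the extreme scenarios the optimal state of the root takes at most $|\text{ext}(\Xi_{\text{root}})|$ distinct values, and recursively each $\bm x_i$ takes at most $d_i$ values, so the smallest admissible forecast set is the convex hull of $d_i$ points --- no structure on the policy is required. Your proposed remedy (barycentric lifting plus an optimality result for affine rules over products of simplices) is not the right tool; the extreme-scenario reduction for right-hand-side uncertainty is the missing step. If one instead reads the corollary as a statement about the affine restrictions of both problems, your argument is essentially complete.
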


We end this section with Figure~\ref{fig::theorems} which summarizes the relationship between the different problems discussed in the paper so far.

\begin{figure}[ht]
	\centering
	\includegraphics[width = 1\textwidth]{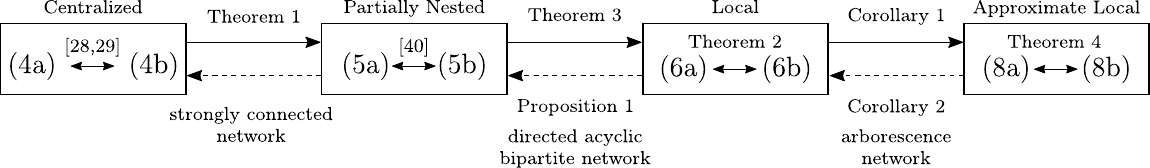}
	\caption{Illustration of the relationships among the various problems discussed in the paper. The optimal values of these problems are non-decreasing in the directions of the arcs. Dashed arcs indicate  special cases of networks, while solid arcs represent inequalities that hold under general network topologies.}
	\label{fig::theorems}
\end{figure}

\section{Numerical experiments}\label{sec::Numerics}
We now investigate the efficacy of the proposed approach in two examples: an illustrative which allows us to understand numerically and graphically the connection between the shape of the primitive sets $\mathcal{S}_i$ and the solution quality and a stylized contract design for supply chains that are typically operated in a distributed fashion. All optimization problems were solved with Gurobi in MATLAB using the RSOME \citep{Chen2020robust} and YALMIP \citep{Lofberg2004} interfaces on a computer equipped with \mbox{$ 8 $ GB RAM} and \mbox{$ 2.4 $ GHz} quad-core Intel processor.

\subsection{Illustrative example}\label{illustrative}

We consider a single stage problem composed by two agents with states $ x_1, x_2 \in \mathbb R^2 $, inputs $ u_1, u_2 \in \mathbb R $ and uncertainties $ \xi_1, \xi_2 \in \Xi = \{\xi \in \mathbb R^2: \| \xi \|_\infty \leq 1 \} $, respectively. The network structure along with the objective functions and constraint sets of each agent are shown in Figure~\ref{fig::toyExample}. The system matrices are given by
\begin{equation}
	c = \begin{bmatrix}
		1 \\ -1
	\end{bmatrix}, \;B = \begin{bmatrix}
		1 & 0 \\ 0 & -2
	\end{bmatrix},\; D = \begin{bmatrix}
		1 \\ 0.8
	\end{bmatrix} \text{ and } E = \begin{bmatrix}
		1 & -1 \\ -1 & 1
	\end{bmatrix}.
\end{equation}

\begin{figure*}
	\centering
	\includegraphics[width=0.6\textwidth]{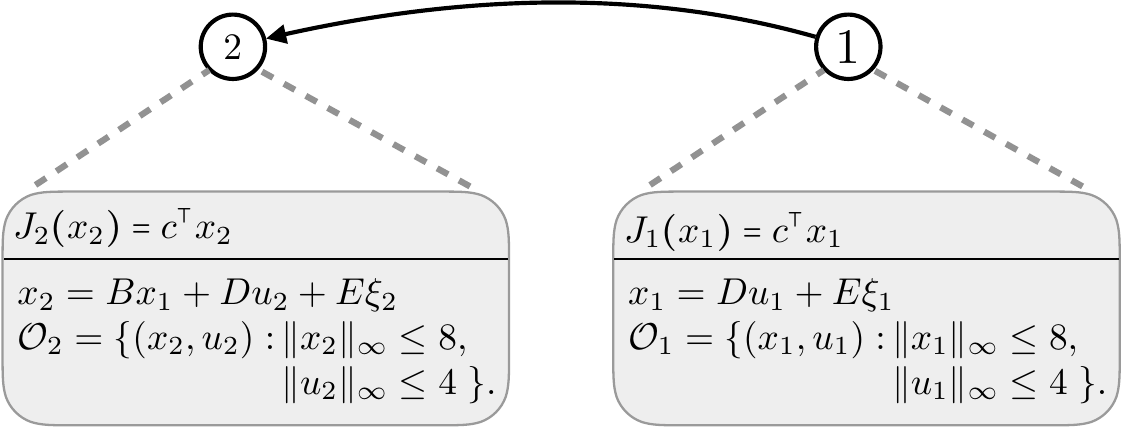}
	\caption{Physical and information structure of the two agents in the system.}
	\label{fig::toyExample}
\end{figure*}

We start by computing the optimal affine policy associated for the centralized communication and the partially nested communication, which are reminiscent of Problem~\eqref{Centralized} and Problem~\eqref{Semi-Centralized}, respectively. In the centralized communication both agents observe the uncertain variables $\xi_1$ and $\xi_2$, while in the partially nested communication agent $1$ only observes $\xi_1$ whereas agent $2$ observes both $\xi_1$ and $\xi_2$ as dictated by the structure of the network. In other words, the second agent's policy is of the form $u_2(\xi_1, \xi_2) = \Gamma_{2,1} \xi_1 + \Gamma_{2,2} \xi_2 + \gamma_{2}$ in both cases, and the first agent's policy is of the form $u_1(\xi_1, \xi_2) = \Gamma_{1,1} \xi_1 + \Gamma_{1,2} \xi_2 + \gamma_{1}$ in the centralized communication and $u_1(\xi_1, \xi_2) = \Gamma_{1,1} \xi_1 + \gamma_{1}$ in the partially nested communication. Figure~\ref{fig::centralizedSynthesis} illustrates information about the states of agent $ 1 $. With red we depict the feasible region of $ x_1 $, while blue depicts region $x_1(\xi_1,\xi_2) = D u_1^\star(\xi_1,\xi_2) + E\xi_1$ for all $\xi_1 \in \Xi_1,\, \xi_2 \in \Xi_2$, where $u_1^\star$ is the optimal affine policy of the agent $1$. Figure~\ref{fig::centralizedSynthesis}(a) and (b) illustrate these regions in the case of centralized and partially nested communications, respectively. In addition, we report the resulting objective values in the title of the respective figure as ``obj. = $ J_1(x_1) + J_2(x_2) $''. We observe that the information restriction imposed on the partially nested policies results in a larger objective value compared to the centralized solution. 
\begin{figure*}[ht]
	\center
	\begin{minipage}{0.425\textwidth}
		\subfigure[]{\includegraphics[width = \textwidth]{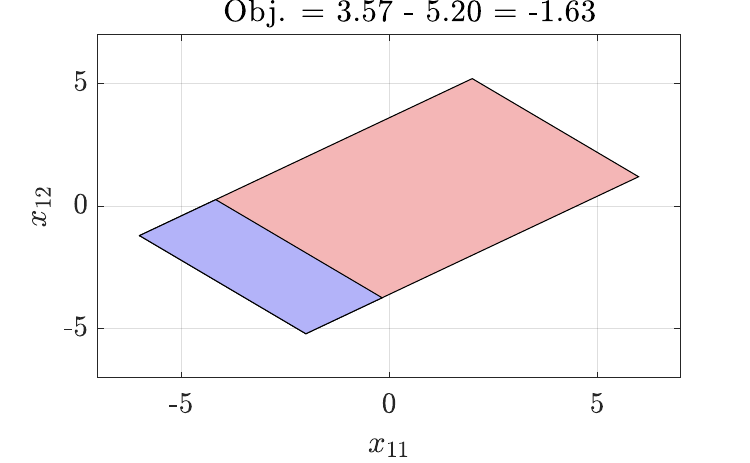}\label{fig::centralizedSynthesis_a}}
	\end{minipage}\hfil
	\begin{minipage}{0.425\textwidth}
		\subfigure[]{\includegraphics[width = \textwidth]{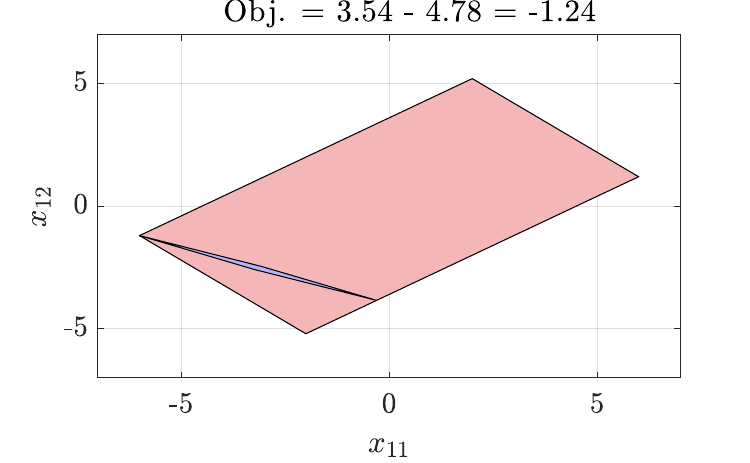}\label{fig::centralizedSynthesis_b}}
	\end{minipage}
	\caption{Illustrating information for the state of agent $1$ for the (a) centralized and (b) partially nested communication exchange. The red region represents the feasible set for $ x_1 $ and the blue region represents all possible values for $x_1$ under the optimal policy.}
	\label{fig::centralizedSynthesis}
\end{figure*}

Next, we compute affine policies with local information exchange, which is reminiscence of Problem \eqref{DecentralizedFinal_GeneralY}. In the local information exchange, the agents' policies are of the form $u_1(\xi_1, \xi_2) = \Gamma_{1,1} \xi_1 + \gamma_{1}$ and $u_2(s_1, \xi_2) = \Gamma_{2,1} s_1 + \Gamma_{2,2} \xi_2 + \gamma_{2}$, respectively, where $s_1 \in \mc S_1$ represents the auxiliary uncertainty introduced in Section~\ref{sec::SolMethod}. 
We start with investigating how the shape of set $ \mathcal S_1 $  together with the restriction of $\mathcal{X}_1$  affects the quality of the solution in terms of objective value for three cases. In the first case (flexible), we choose $ \mathcal S_1 = \{ s \in \mathbb R^2 : \| s \|_{\infty} \leq 1 \} $ and restrict $\mathcal{X}_1$ as in equation~\eqref{eq::SFSv1_nonconvex}. In the second case (rectangle), we choose $ \mathcal S_{1,1} = S_{1,2} = \{ s \in \mathbb R : | s | \leq 1 \} $ and restrict $\mathcal{X}_1 = \mathcal{X}_{1,1}\times\mathcal{X}_{1,2}$ as in Remark~\ref{remark1}, and in the third case (circle), we choose $ \mathcal S_1 = \{ s \in \mathbb R^2 : \| s \|_2 \leq 1 \} $ and  set $\mathcal{X}_1 = \{\bs{x}_1 \in\mathbb{R}^{2}: \exists \bm s \in \mc S_1 \text{ s.t. } \bm x_{1} =  \bm s y_{1} + z_{1}\}$. 

The robust counterpart in the first case results in a bilinear optimization problem which we solve using block-coordinate descent, the second case results in a linear program, while the third case results in a semidefinite program due to the use of the celebrated S-Lemma \citep[\S~3.5]{Ben2001lectures} for reformulating the robust constraints.
The flexible approximation exactly recovers the optimal solution of partially nested communication exchange, i.e., Figure~\ref{fig::centralizedSynthesis} (b). 
The result coincides with  Corollary~\ref{cor::2} and Proposition~\ref{prop::1} as the physical network is both and arborescence and a directeed acyclic bipartite graph, and additionally the approximation has enough degrees of freedom to adequately represent the optimal state forecast set.
This is not the case for the rectangular and circular approximation, whose performance is depicted in Figure~\ref{fig::distributedNorms}. 

\begin{figure*}[!tb]
	\center
	\begin{minipage}{0.425\textwidth}
		\subfigure[]{\includegraphics[width = \textwidth]{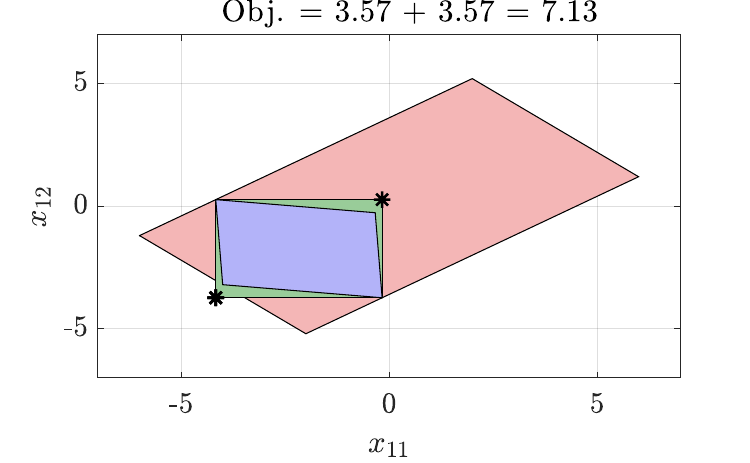}}
	\end{minipage}\hfil
	\begin{minipage}{0.425\textwidth}
		\subfigure[]{\includegraphics[width = \textwidth]{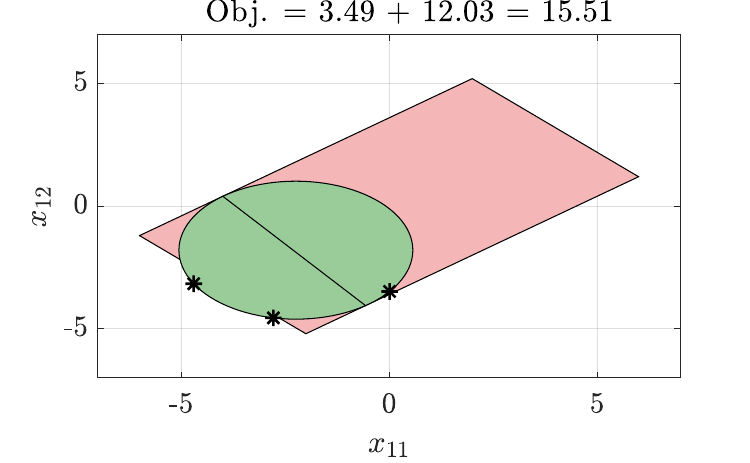}}
	\end{minipage}
	\caption{Illustrating information for the state of agent $1$ using $ \mathcal S_1 $ as {(a) box}, and {(b) circle}.  The green region represents the feasible set for $ x_1 $, the yellow represents $x_1(\xi_1)$ under the optimal policy, while red denotes $\mc X_1$.  Black stars depict the binding scenarios for agent $2$ in the state forecast set $ \mathcal X_1 $.}
	\label{fig::distributedNorms}
\end{figure*}

As in the first experiment, red area represents the feasible region of $ x_1 $, while the blue region depicts $x_1(\xi_1) = D u_1^\star(\xi_1) + E\xi_1$ for all $\xi_1 \in \Xi_1$. Additionally, the green area denotes the optimal forecast set $ \mathcal X_1 $ communicated to agent $2$. Finally, the black stars depict the binding scenarios for agent $2$ in the state forecast set $ \mathcal X_1 $. 
We observe that the optimal region of agent $1$ changes with the different primitive sets in an attempt to cooperate with agent $2$. This cooperative behavior is also identified in the objective values as agent $1$ ``sacrifices'' some of its optimality for the good of agent $2$. Interestingly, part of the state forecast set $ \mathcal X_1 $ may lie outside the feasible region of the problem which adds conservativeness to the system as can be verified by inspecting the position of the binding scenarios. Moreover, since the binding scenarios are not necessarily placed at the corner points defined by the optimal region, agent $1$ retains some of its privacy since the behavior of its optimal policy is not revealed to agent~$2$.

To quantify the importance of the primitive set orientation in space, in the following experiment we use as primitive sets $ (i) $ a rotated rectangular set whose major and minor axis can be scaled independently, i.e., using the same notation as before and with slide abuse of notation we have $\mathcal{X}_1 = A(\mathcal{X}_{1,1}\times\mathcal{X}_{1,2})$ where $A\in\mathbb{R}^{2\times 2}$ is a fixed rotation matrix, and $ (ii) $ a rotated ellipsoid for which the major axis is forced to be $ 1.5 $ times larger than its minor axis, i.e, we set $\mathcal{X}_1 = \{\bs{x}_1 \in\mathbb{R}^{2}: \exists \bm s \in \mc S_1 \text{ s.t. } \bm x_{1} =  A(\bm s y_{1} + z_{1})\}$ where again $A\in\mathbb{R}^{2\times 2}$ is a fixed rotation matrix. Figure~\ref{fig::rotatedSets} illustrates information for the state of agent $1$, where we use the same color code as in Figure~\ref{fig::distributedNorms}.
Comparing the solutions from Figure~\ref{fig::distributedNorms} and Figure~\ref{fig::rotatedSets}, we can conclude that both the shape and orientation heavily affect the optimal policies as well as the optimal value of the problems. 
\begin{figure*}[th]
	\center
	\begin{minipage}{0.425\textwidth}
		\subfigure[]{\includegraphics[width = \textwidth]{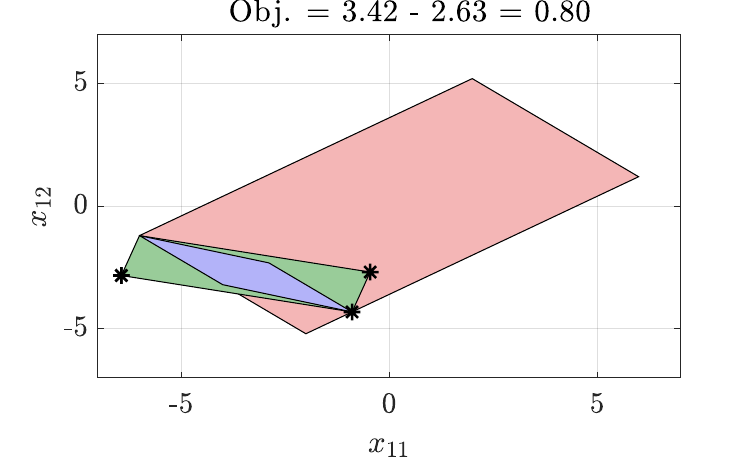}}
	\end{minipage}\hfil
	\begin{minipage}{0.425\textwidth}
		\subfigure[]{\includegraphics[width = \textwidth]{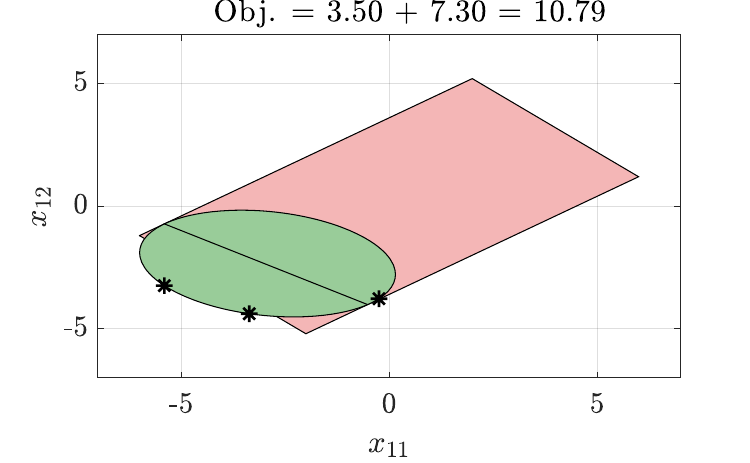}}
	\end{minipage}
	\caption{Illustrating information for the state of agent $1$ using $ \mathcal S_1 $ as (a) rectangle rotated by 15 degrees and (b) ellipsoid rotated by 15 degrees  for which the major axis is forced to be $ 1.5 $ times larger than its minor axis. The graphs have the same interpretation as in Figure~\ref{fig::distributedNorms}.}
	\label{fig::rotatedSets}
\end{figure*}

To clarify this finding, we repeated the experiment for all possible rotations in the range $ [0,\,180] $ degrees of the rectangular and scaled ellipsoid sets by appropriately choosing matrix $A$. The results are reported in Figure~\ref{fig::costPlot}. We observe that if the rotation of the communicated sets is aligned with the set generated by the optimal affine policy, then the solution resulting from the proposed distributed method closely approximates the partially nested and centralized solutions. If, however, this is not the case, then the cost considerably deviates and even leads to infeasibility. 
\begin{figure*}[!tb]
	\centering
	\includegraphics[width=0.7\textwidth]{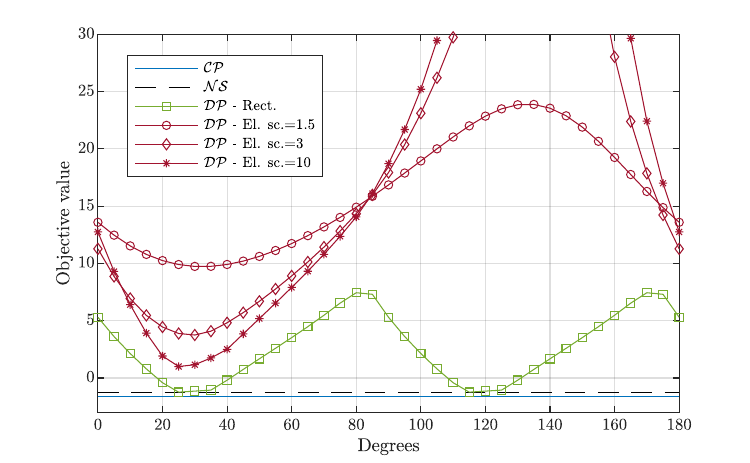}
	\caption{Comparing the objective value achieved by affine policies using the centralized (C), partially nested (PN), and local information exchange (L). For the local information we set $\mc S_1$ to be a rotated rectangle (L-Rect.),  and a rotated ellipsoid (L-El.) for which the major axis is forced to be $ 1.5, 3 $ and $10$ times larger than its minor axis. The graph reports the objective values for all possible rotations in the range $ [0,\,180] $.}\label{fig::costPlot}
\end{figure*}

\subsection{Cooperative energy management system}
\label{sec::energy}

Buildings account for approximately $40\%$ of the world's energy consumption, with a significant portion dedicated to enhancing living conditions through heating, ventilation, and air conditioning to promote healthier and more comfortable environments \citep{laustsen2008energy}. Motivated by these substantial energy demands, researchers have invested considerable efforts into developing complex control schemes that can reduce energy usage while keeping room temperatures within predefined ranges. Among such schemes, cooperative control systems have been instrumental for achieving energy savings by coordinating aggregated demands across multiple users. Most existing cooperative methods assume the existence of a central operator that is capable of controlling both the buildings actuation systems and the energy hub devices \citep{oldewurtel2012use,sturzenegger2015model}. This assumption, however, becomes restrictive for large-scale implementations due to heavy computational burden. Additionally, privacy concerns around revealing sensitive building information, such as occupancy patterns and comfort preferences, arise. Consequently, a new wave of research has focused on developing decentralized or distributed control schemes to address the limitations associated with centralized approaches \citep{keviczky2006decentralized, scattolini2009architectures, stewart2010cooperative}.

\begin{figure} [!tb]
	\centering
	\includegraphics[width=0.8\textwidth]{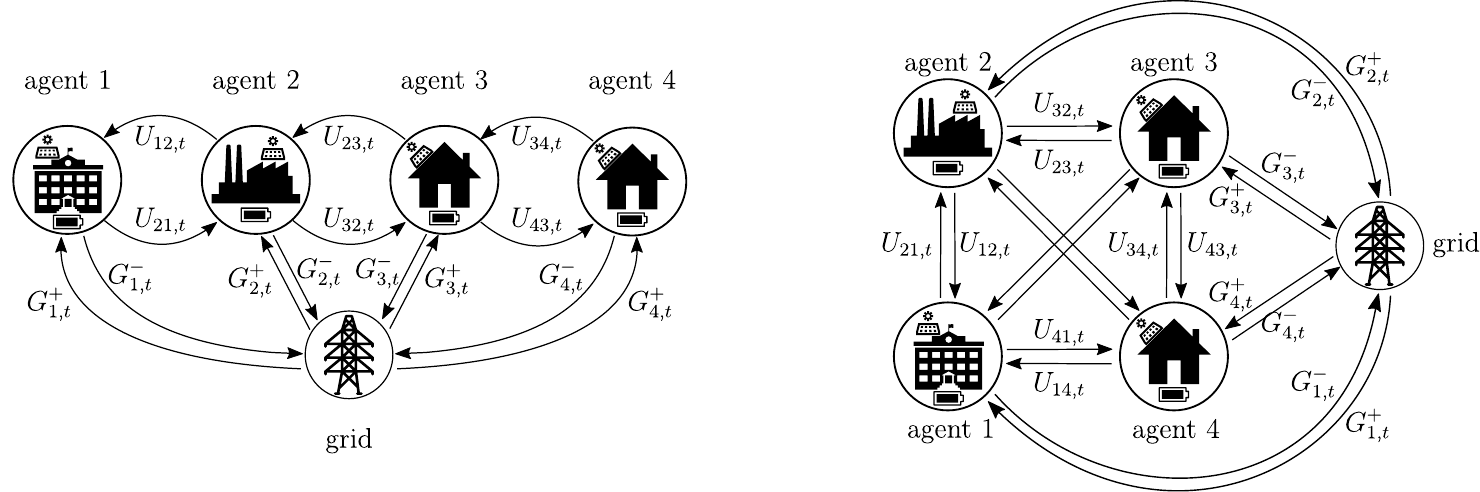}
	\caption{An example of a serial (left) and complete (right) energy hub topologies with different consumers.}
	\label{fig::energyhub}
\end{figure}

In this section we examine the scalability and efficiency of our proposed method using an energy hub system with aggregated consumers. Figure~\ref{fig::energyhub} is an illustration of a serial and a complete (fully-connected) energy hub consisting of four different consumers. In this setup agent $i$ at time $t$ can buy $G_{i,t}^+$ units of electricity from the grid at a cost of $c_t^+$ and can return $G_{i,t}^-$ units at the cost of $c_t^-$. The agents also have the ability to share excess electricity amongst themselves, with  $U_{ij,t}$ denoting the amount of electricity agent $i$ takes from agent $j$ at time $t$. This transfer of electricity is done at a cost of $c_t^u$ per unit since the agents need to use the existing grid infrastructure. The state of charge of the battery of agent $i$ at time $t$ is denoted by $I_{i,t}$. We assume that the output of the photovoltaic system is uncertain and denoted by $R_{i,t}(1+\xi_{i,t}^R)$ where $R_{i,t}$ denotes the average production and $\xi_{i,t}^R$ denotes the uncertain percentage deviation from the average production. Similarly, the uncertain demand for electricity of agent $i$ at time $t$ is denoted by $D_{i,t}(1+\xi_{i,t}^D)$ where $D_{i,t}$ is the average demand and $\xi_{i,t}^D$ denotes the uncertain percentage deviation from the average demand. Finally we define $\xi_{i,t} = (\xi_{i,t}^D, \xi_{i,t}^R)$. We express the battery dynamics of agent $i$ through
\begin{align*}
	I_{i,t+1} = I_{i,t} + G^+_{i,t} - G^-_{i,t} + \sum_{j \in \mc N_i} U_{ij,t} - \sum_{j \in \mc N_i} U_{ji,t} + R_{i,t} ( 1 + \xi_{i,t}^p ) - D_{i,t} ( 1 + \xi_{i,t}^d) \quad \forall \bm\xi_{i} \in \Xi_i, \forall t\in\mathcal{T},
\end{align*}
where for the illustrative example of the serial energy hub in  Figure~\ref{fig::energyhub}, we have $\mc N_1 = \{2\},\, \mc N_2 = \{1 , 3 \},\, \mc N_3 = \{ 2,4 \},\, \mc N_4 = \{ 3 \}$, and for the complete energy hub we have  $\mc N_1 = \{2,3,4\},\, \mc N_2 = \{1 , 3,4 \},\, \mc N_3 = \{ 1,2,4 \},\, \mc N_4 = \{1,2, 3 \}$.
In addition, at any $t \in \mc T_+$, we want to ensure that constraint $0 \leq I_{i,t} \leq B_i$ is robustly satisfied for some $B_i > 0$ that represents the capacity of the battery.

The communication links needed to formulate the serial energy hub in Figure~\ref{fig::energyhub} are depicted in Figure~\ref{fig::SMD_Network}(a). Since the serial network is a strongly connected network, as discussed in Section~\ref{partiallynested}, the number of links required to formulate the partially nested Problem~\eqref{Semi-Centralized} is the same as in the centralized Problems~\eqref{Centralized}, effectively leading to the same problem. On the other hand, the links needed by the  local information exchange   matches the physical coupling of the network thus  establishing communication only between adjacent agents in the system, see Figure~\ref{fig::SMD_Network}(b). This demonstrates the modeling benefits of the proposed approach and highlights the limitations of the partially nested information exchange on certain network topologies.
\begin{figure}
	\center
	\begin{center}	\scalebox{0.45}{ \includegraphics{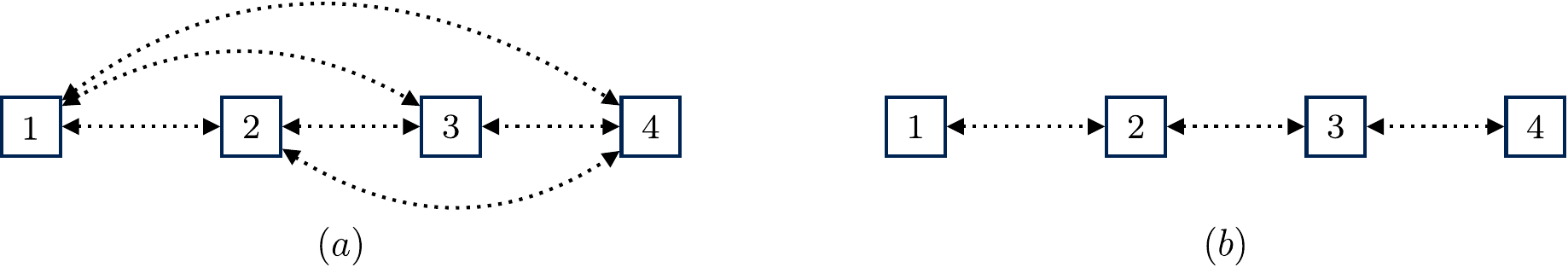}} \end{center}
	\caption{Communication links need to achieve (a)  centralized and partially nested information exchange and (b)  local information exchange for the serial energy hub system.}
	\label{fig::SMD_Network}
\end{figure}

In the centralized information exchange problem, the agents communicate the state of their batteries to all agents in the grid. We formulate the centralized problem as an instance of  Problem~\eqref{Centralizedb} as follows, where the objective minimizes the total cost incurred  by all agents. 
\begin{align}
	\label{eq:central:energy}
	\begin{array}{l}
		\text{~minimize~~} \displaystyle \sum_{i=1}^M \max\limits_{\bm \xi_{\mc M} \in \Xi_{\mc M}} \sum_{t =1}^{T} c_{t}^+ G^+_{i,t} + \sum_{t =1}^{T} c_{t}^- G^-_{i,t} + \sum_{j \in \mc N_i} \sum_{t =1}^{T} c_{t}^u U_{ij,t} \\ 
		\left.
		\begin{array}{@{}r@{~}ll}
			\text{subject to} & \bm G_i^+ = \bm \Pi_i^+(\bm \xi_{\mc M}), ~ \bm G_i^- = \bm \Pi_i^-(\bm \xi_{\mc M}) \\
			& \bm U_{ij} \in \bm \Pi_{ij}(\bm \xi_{\mc M}), ~ \bm U_{ji} \in \bm \Pi_{ji}(\bm \xi_{\mc M}) & \forall j \in \mc N_i \\
			& G_{i,t}^+ \geq 0, ~  G_{i,t}^- \geq 0, ~ U_{ij,t} \geq 0, ~ U_{ji,t} \geq 0 & \forall j \in \mc N_i, \forall t \in \mc T \\
			& \displaystyle I_{i,t+1} = I_{i,t} + G^+_{i,t} - G^-_{i,t} + \sum_{j \in \mc N_i} U_{ij,t} - \sum_{j \in \mc N_i} U_{ji,t}
			\\[-1em]
			& & \forall t \in \mc T \\[-1em]
			& \hspace{5em} + R_{i,t} ( 1 + \xi_{i,t}^p ) + D_{i,t} ( 1 + \xi_{i,t}^d) \\
			& \displaystyle 0 \leq I_{i,t} \leq B_i & \forall t \in \mc T_+
		\end{array}
		\right \rbrace 
		\begin{array}{l}
			\forall i \in \mc M  \\
			\forall \bm \xi_{\mc M} \in \Xi_{\mc M}
		\end{array} \hspace{-2em}
	\end{array}
\end{align}

Consider now the case in which instead of communicating the state of their batteries to all agents in the grid, agents are restricted to communicate an interval that represents the amount of electricity they can supply  their neighbors at each time period. The interval $[\underline{b}_{ji,t}, \overline{b}_{ji,t}]$ denotes the amount of electricity agent $i$  commits to supply agent $j$ at time $t$.	Of course, agent $i$ does not have  to use all the electricity available from agent $j$ hence $U_{ij,t} \in [\underline{b}_{ij,t}, \overline{b}_{ij,t}]$. In this case, the battery dynamics of agent $i$ at time $t$ is expressed as 
\begin{equation*}
	I_{i,t+1} = I_{i,t} + G^+_{i,t} - G^-_{i,t} + \sum_{j \in \mc N_i} U_{ij,t} - \sum_{j \in \mc N_i} \zeta_{ji,t} + R_{i,t} ( 1 + \xi_{i,t}^p ) - D_{i,t} ( 1 + \xi_{i,t}^d) \quad  \forall \zeta_{\mathcal{N}_i,i,t} \in\underset{j\in\mathcal{N_i}}{\times} [\underline{b}_{ji,t}, \overline{b}_{ji,t}]  
\end{equation*}
where $\zeta_{\mathcal{N}_i,i,t}$ denotes all the electricity that can potentially be drawn from agent $i$ by its neighbors $\mathcal{N}_i$ at time $t$. This problem gives rise to a local information exchange problem which is an instance of Problem~\eqref{Decentralized} and can be written as follows.
\begin{equation}\label{energyhub_local}
	\begin{array}{l}
		\text{~minimize~~} \displaystyle \sum_{i=1}^M \max\limits_{\bm \xi_i \in \Xi_i, \bm \zeta_{\Ni,i} \in \mc U_{\Ni,i}} \sum_{t =1}^{T} c_{t}^+ G^+_{i,t} + \sum_{t =1}^{T} c_{t}^- G^-_{i,t} + \sum_{j \in \mc N_i} \sum_{t =1}^{T} c_{t}^u U_{ij,t} \\ 
		\left.
		\begin{array}{@{}r@{~}ll}
			\text{subject to} & \bm G_i^+ = \bm \psi_i^+(\bm I_{i}, \bm \zeta_{\Ni,i}), ~ \bm G_i^- = \bm \psi_i^-(\bm I_{i}, \bm \zeta_{\Ni,i}) \\
			& \bm U_{ij} \in \bm \psi_{ij}(\bm I_{i}, \bm \zeta_{\Ni,i}), ~ \underline{\bm{b}}_{ij},\,\overline{\bm{b}}_{ij} \in \mathbb{R}_+^{T \times P} & \forall j \in \mc N_i\\
			& \bm U_{ij} \in \mc U_{i,j} = [\underline b_{ij,1}, \,\overline b_{ij,1}] \times \dots \times [\underline b_{ij,T}, \,\overline b_{ij,T}] & \forall j \in \mc N_i \\
			& G_{i,t}^+ \geq 0, ~  G_{i,t}^- \geq 0, ~ U_{ij,t} \geq 0 & \forall j \in \mc N_i, \forall t \in \mc T \\
			& \displaystyle I_{i,t+1} = I_{i,t} + G^+_{i,t} - G^-_{i,t} + \sum_{j \in \mc N_i} U_{ij,t} - \sum_{j \in \mc N_i} \zeta_{j,i,t}
			\\[-1em]
			& & \forall t \in \mc T \\[-1em]
			& \hspace{5em} + R_{i,t} ( 1 + \xi_{i,t}^p ) + D_{i,t} ( 1 + \xi_{i,t}^d) \\
			& \displaystyle 0 \leq I_{i,t} \leq B_i & \forall t \in \mc T_+
		\end{array}
		\right \rbrace 
		\begin{array}{l}
			\forall \bm \xi_{i} \in \Xi_{i} \\
			\forall \bm \zeta_{\mc N_i, i} \in \mc U_{\mc N_i, i} \\
			\forall i \in \mc M, 
		\end{array}
	\end{array}
\end{equation}
By construction of Problem~\eqref{energyhub_local}, each set $\mathcal{U}_{i,j} = \mathcal{U}_{i,j,1}\times \cdots \times \mathcal{U}_{i,j,T}$ is a  hyper-rectangles which can be controlled coordinate wise. Hence, it can be exactly represented by the primitive sets $\mathcal S_{i,j,t} = \{ s \in \mathbb R : | s | \leq 1 \} $ and $\mc U_{i,j,t}(y_{i,j,t},z_{i,j,t}) = \displaystyle\left\{U_{ij,t} \in\mathbb{R} \,:\, \exists s_{i,j,t}  \in \mc S_{i,j,t} \text{ s.t. } U_{ij,t} =   s_{i,j,t} y_{i,j,t} + z_{i,j,t} \right\},$
as discussed in Remark~\ref{remark1}. Applying Theorem~\ref{thm::5}, Problem~\eqref{energyhub_local} can thus be reformulated as
\begin{align}
	\label{eq:local:energy}
	\begin{array}{l}
		\text{~minimize~~} \displaystyle \sum_{i=1}^M \max\limits_{\bm \xi_i \in \Xi_i, \bm s_{\Ni, i} \in \mc S_{\Ni, i}} \sum_{t =1}^{T} c_{t}^+ G^+_{i,t} + \sum_{t =1}^{T} c_{t}^- G^-_{i,t} + \sum_{j \in \mc N_i} \sum_{t =1}^{T} c_{t}^u U_{ij,t} \\ 
		\left.
		\begin{array}{@{}r@{~}ll}
			\text{subject to} & \bm G_i^+ = \bm \Gamma_i^+(\bm \xi_{i}, \bm s_{\Ni, i}), ~ \bm G_i^- = \bm \Gamma_i^-(\bm \xi_{i}, \bm s_{\Ni, i}) \\
			& \bm U_{i,j} \in \bm \Gamma_{i,j}(\bm \xi_{i}, \bm s_{\Ni, i}), ~ \underline{\bm{b}}_{i,j},\,\overline{\bm{b}}_{i,j},\, \bm y_{i,j}, \bm z_{i,j} \in \mathbb{R}_+^{T \times P} & \forall j \in \mc N_i \\
			& \underline{\bm{b}}_{i,j} = \bm z_{i,j} - \bm y_{i,j}, \, \overline{\bm{b}}_{i,j} = \bm z_{i,j} + \bm y_{i,j} & \forall j \in \mc N_i \\
			& \bm U_{i,j} \in \mc U_{i,j} = [\underline b_{i,j,1}, \,\overline b_{i,j,1}] \times \dots \times [\underline b_{i,j,T}, \,\overline b_{i,j,T}] & \forall j \in \mc N_i \\
			& G_{i,t}^+ \geq 0, ~  G_{i,t}^- \geq 0, ~ U_{i,j,t} \geq 0 & \forall j \in \mc N_i, \forall t \in \mc T \\
			& \zeta_{j,i,t} = y_{j,i,t} s_{j,i,t} + z_{j,i,t} & \forall j \in \mc N_i, \forall t \in \mc T \\
			& \displaystyle I_{i,t+1} = I_{i,t} + G^+_{i,t} - G^-_{i,t} + \sum_{j \in \mc N_i} U_{i,j,t} - \sum_{j \in \mc N_i} \zeta_{j,i,t}
			\\[-1em]
			& & \forall t \in \mc T \\[-1em]
			& \hspace{5em} + R_{i,t} ( 1 + \xi_{i,t}^p ) + D_{i,t} ( 1 + \xi_{i,t}^d) \\
			& \displaystyle 0 \leq I_{i,t} \leq B_i & \forall t \in \mc T_+
		\end{array}
		\right \rbrace 
		\begin{array}{l}
			\\ \\
			\forall \bm \xi_{i} \in \Xi_{i} \\
			\forall \bm s_{\mc N_i, i} \in \mc S_{\mc N_i, i} \\
			\forall i \in \mc M.
		\end{array} \hspace{-3em}
	\end{array}
\end{align}

In the sequel we use the open-source dataset from~\cite{bergmeir_2023_8219786} comprising residential power and battery data at minute resolution. This anonymized dataset contains real-world prosumer information on energy usage, photovoltaic power generation, and battery characteristics. Figure~\ref{fig::data} depicts the hourly power consumption and generation of the first prosumer in this dataset from July 1, 2021, to October 1, 2021, using lines with low opacity. Additionally, the solid lines represent the average data, while the dashed lines represent observations that are one standard deviation away from the mean. These intervals are used to calibrate the uncertainty set for each prosumer. We assume that the initial charge of all batteries is zero. The capacity of batteries for each prosumer is also available in the dataset. For example, the capacity is $B_1 = 13$ for the first prosumer.  Since the data is anonymized and we lack access to the price data, we set the average hourly electricity price~as
\begin{align*}
	c_t = 18 - \tanh (t) + \tanh (t - 4) - 2 \tanh (t - 6) + 4 \tanh (t - 17) - 4 \tanh(t - 24),
\end{align*}
following the typical electricity price pattern, where prices are higher at the beginning and end of the day~\citep{California}. Problems~\eqref{eq:central:energy} and \eqref{eq:local:energy} are multistage robust linear programs which we approximate with affine decision rules.  

\begin{figure*}[!tb]
	\center
	\includegraphics[width = 0.3\textwidth]{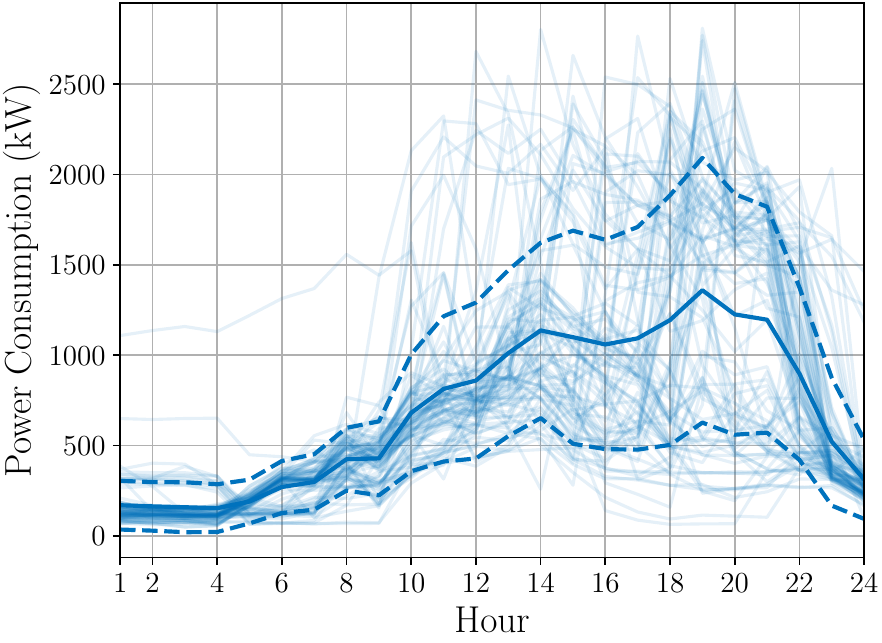}
	\includegraphics[width = 0.3\textwidth]{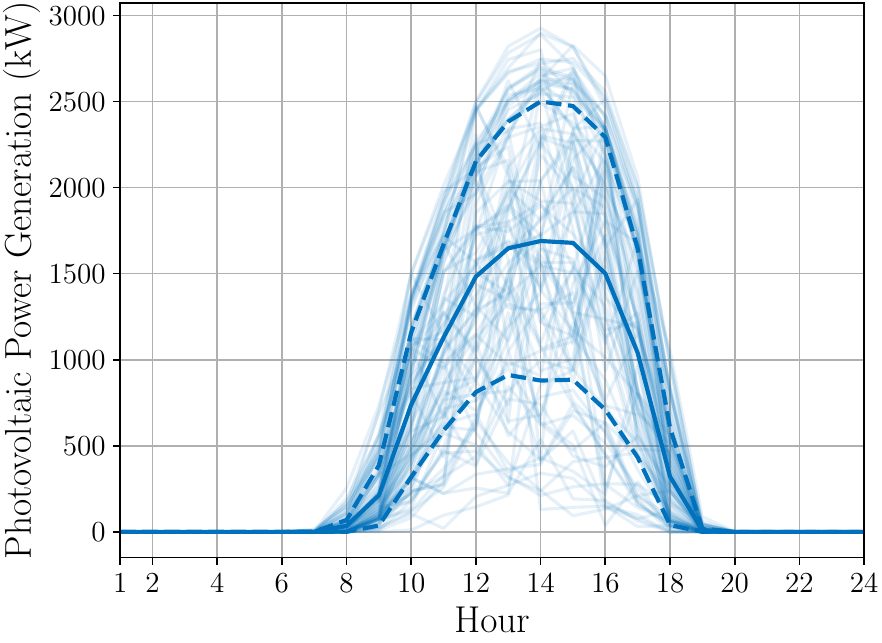} 
	\includegraphics[width = 0.29\textwidth]{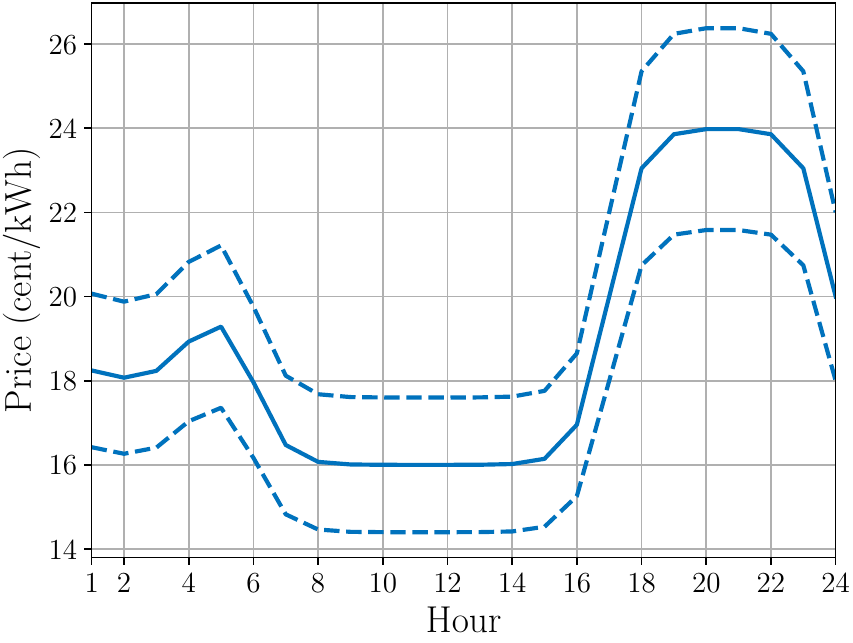}
	\caption{Illustration of power consumption, photovoltaic power generation, and electricity price for the first prosumer.}
	\label{fig::data}
\end{figure*}

We conduct $10$ independent simulation. For each simulation, we pick uniformly at random $\varepsilon_t \in [-0.1, 0.1]$ and we set the market hourly price to $c_t^+ = (1 + \varepsilon_t) c_t$ for all $t \in \mc T$, i.e., the market prices are allowed to differ by up to $10\%$ from the average price, see Figure~\ref{fig::data} (right). 
We also set $c_t^- = 0.5 c_t^+$ and $c_t^u = 0.2 c_t^+$.  
We consider bi-hourly time steps, meaning that every prosumer is allowed to request or provide electricity to others every $2$ hours.
The electricity price for each $2$-hour interval is determined by the average price, while the power consumption and generation are the sum of their respective values.
In other words, we represent a day with a horizon length of $T=12$. As baseline, we also examine a system in which all prosumers try to minimize their worst-case electricity costs individually, and we refer to it as the \emph{decoupled problem}. This can be obtained by including the constraint $U_{ij,t} = 0$ for all $t \in \mc T, j \in \mc N_i, i \in \mc M$ in~\eqref{eq:central:energy}. We conduct simulations on serial and complete networks with an increasing number of $M$ prosumers, using the data of the first prosumers in \cite{bergmeir_2023_8219786} which have both demand and photovoltaic data available.\footnote{In particular, we use the prosumers with identification numbers $1, 2, 3, 9, 10, 11$.}

\begin{figure*}[!tb]
	\center
	\includegraphics[width = \textwidth]{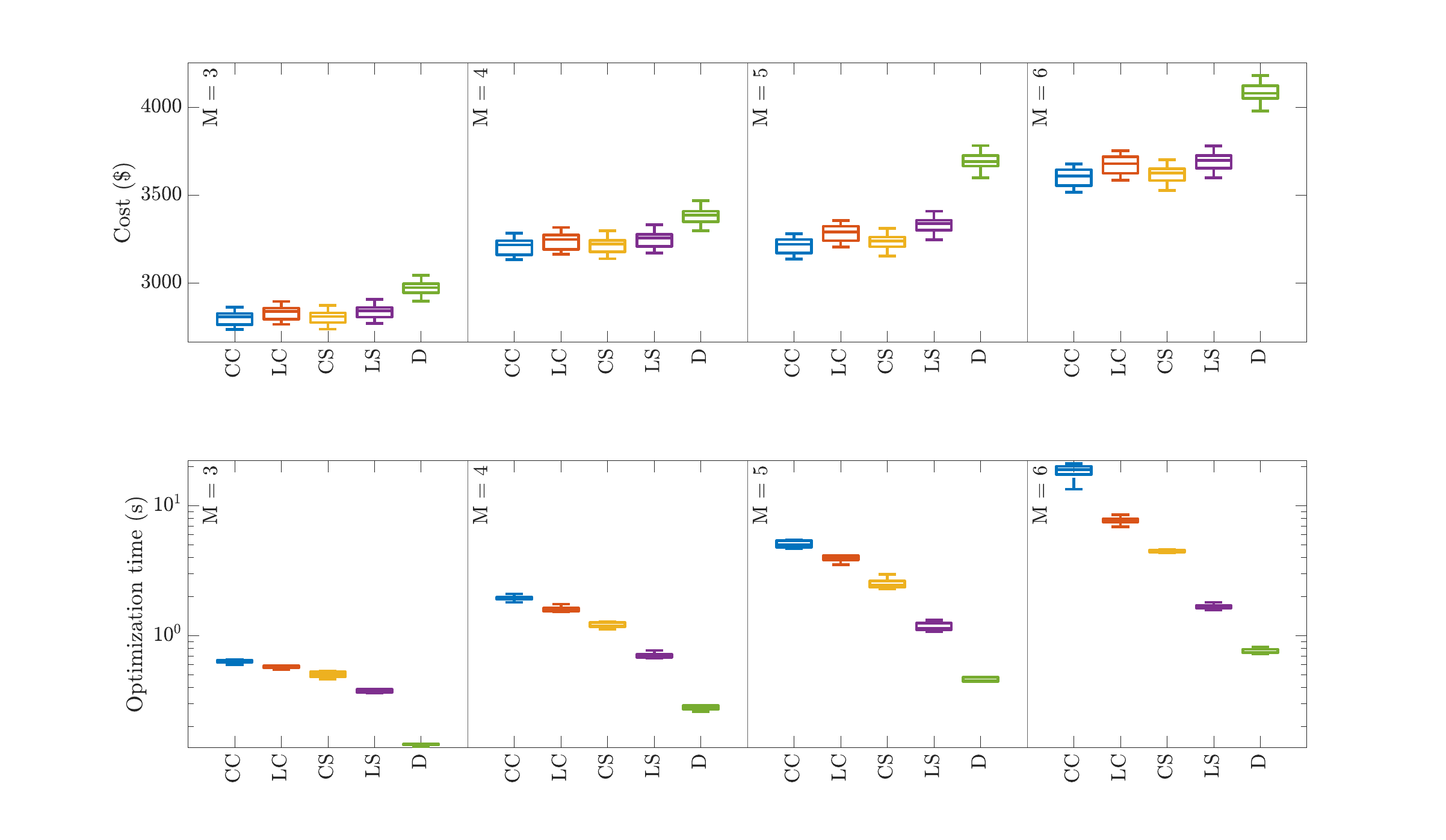}
	\caption{Comparison of the cost and optimization time needed to solve the centralized, local, and decoupled information problems for serial and complete network. The result is based on $10$ independent simulations. (CC) Centralized Complete, (CS) Centralized Serial, (LC) Local Complete, (LS) Local Serial, (D) Decoupled.}
	\label{fig::energy-execution}
\end{figure*}


The first experiment investigates the impact of the number of prosumer $M$ in the system on the worst-case cost for centralized and local information, and decoupled  problems. Figure~\ref{fig::energy-execution} presents the results for the serial and complete networks across $10$ independent simulation runs. Observe that the optimal worst-case cost for the decoupled problem is significantly higher than that of the centralized and local information exchange problems. This is expected because in the decoupled setting, agents do not benefit from sharing electricity among themselves. Furthermore, the optimal worst-case cost for the local information exchange is higher than that for the centralized approach, as indicated by Theorems~\ref{thm::1}, \ref{thm::3}, and Corollary~\ref{corr::1}. Additionally, the structure of a complete network positively influences the worst-case cost outcomes when compared to a serial network configuration.


Furthermore, our findings suggest that the restrictive nature of our proposed local information exchange has only a minimal impact on the performance of the control policy, as demonstrated in Figure~\ref{fig::energy-execution} (top). Specifically, the performance gap in terms of the worst-case cost between the local and central controllers is approximately 2\% on average. Additionally, when compared to the decoupled problems, the centralized controller reduces costs by 15\%, while the localized controller achieves a cost reduction of 13\% on average.



We also observe that the time required to solve the local information exchange problem is less than the time needed for the centralized optimization problem, and the computational gains are more pronounced as the number of residential prosumers increase, as illustrated in Figure~\ref{fig::energy-execution} (bottom). From a computational standpoint, the optimization times for the complete network are considerably longer than those for the serial network. This is expected due to the quadratic increase in the number of communication links with respect to the prosumers in the complete network compared to the linear increase in the serial network. Specifically, optimization problems for the local controller are solved approximately 2.5 times faster in the largest instances of the complete network structure, while in the serial network, this ratio is about 3 times faster. Moreover, the optimization time required for the local information problem is approximately 5 times faster for the largest instance in the serial network compared to the complete network. As anticipated, the computational effort increases with the increasing number of building units.

We conclude our numerical experiment by comparing the average performance of the centralized and local information problems using a rolling horizon scheme when $\varepsilon_t = 0$ for all $t$ and the network structure is serial. In this experiment, we fix all parameters as in the first experiment. We then generate a random realization of the uncertainty $\bm \xi\in\Xi$. Next, we solve the centralized, local, and decoupled information for the horizon length $T=12$ and update the battery levels  according to the realization $\bm \xi_1$ and the first stage decisions. We repeat the process until we reach the end of the horizon. Specifically, at any time $t =2,\ldots,T$, we resolve each  problem for the shorter horizon length $T-t$ and the initial battery level $I_{i,t-1}$ for every $i \in \mathcal M$. We then update the battery levels according to realization $\bm \xi_t$ and corresponding first stage decisions. Figure~\ref{fig::energy-rolling} summarizes our results for $10$ randomly generated realization of uncertainty for a serial network, where the worst-case cost of corresponding problem with horizon $T=12$ is denoted by the marker~$*$. 
We observe that the average performances of the local and centralized information problems are significantly improved compared to their worst-case performance. The improvement is approximately around half (2 times smaller). This is extremely valuable as in reality this problem will be repeatedly solved as time progresses. Moreover, the performance of both the local and centralized information controllers remains superior to that of the decoupled controller. We exclude the result for the complete network structure as it exhibits the same behavior.

\begin{figure*}[!tb]
	\center
	\includegraphics[width = \textwidth]{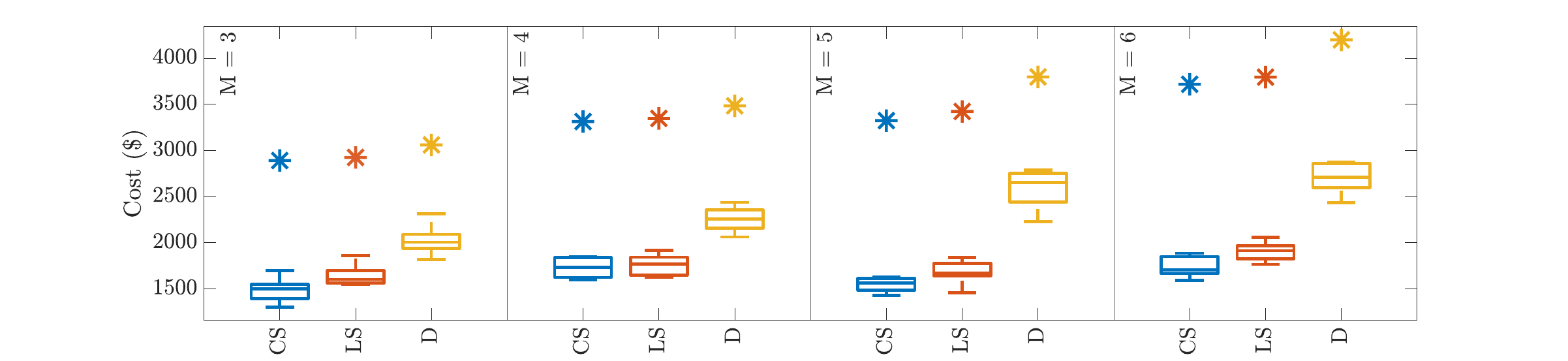}
	\caption{Comparison of the rolling horizon cost of the  centralized and local  information problems for a serial network. The result is based on $10$ independent simulations. (CS) Centralized Serial, (LS) Local Serial, (D) Decoupled.}
	\label{fig::energy-rolling}
\end{figure*} 

\paragraph{Acknowledgements.} This research was supported by the Swiss National Science Foundation under the NCCR Automation (grant agreement 51NF40\_180545) and under an Early Postdoc.Mobility Fellowship awarded to the third author (grant agreement P2ELP2\_195149).

\bibliographystyle{abbrvnat}
\bibliography{ref}

\begin{thebibliography}{73}
\providecommand{\natexlab}[1]{#1}
\providecommand{\url}[1]{\texttt{#1}}
\expandafter\ifx\csname urlstyle\endcsname\relax
  \providecommand{\doi}[1]{doi: #1}\else
  \providecommand{\doi}{doi: \begingroup \urlstyle{rm}\Url}\fi

\bibitem[Bamieh and Voulgaris(2005)]{Bamieh2005}
B.~Bamieh and P.~G. Voulgaris.
\newblock A convex characterization of distributed control problems in
  spatially invariant systems with communication constraints.
\newblock \emph{Systems \& Control Letters}, 54\penalty0 (6):\penalty0
  575--583, 2005.

\bibitem[Bamieh et~al.(2002)Bamieh, Paganini, and Dahleh]{Bamieh2002}
B.~Bamieh, F.~Paganini, and M.~A. Dahleh.
\newblock Distributed control of spatially invariant systems.
\newblock \emph{IEEE Transactions on Automatic Control}, 47\penalty0
  (7):\penalty0 1091--1107, 2002.

\bibitem[Ben-Tal and Nemirovski(2001)]{Ben2001lectures}
A.~Ben-Tal and A.~Nemirovski.
\newblock \emph{Lectures on Modern Convex Optimization: Analysis, Algorithms,
  and Engineering Applications}.
\newblock SIAM, 2001.

\bibitem[Ben-Tal et~al.(2004)Ben-Tal, Goryashko, Guslitzer, and
  Nemirovski]{Ben2004}
A.~Ben-Tal, A.~Goryashko, E.~Guslitzer, and A.~Nemirovski.
\newblock Adjustable robust solutions of uncertain linear programs.
\newblock \emph{Mathematical Programming}, 99\penalty0 (2):\penalty0 351--376,
  2004.

\bibitem[Bergmeir et~al.(2023)Bergmeir, Bui, de~Nijs, and
  Stuckey]{bergmeir_2023_8219786}
C.~Bergmeir, Q.~Bui, F.~de~Nijs, and P.~Stuckey.
\newblock Residential power and battery data, 2023.
\newblock URL \url{https://doi.org/10.5281/zenodo.8219786}.
\newblock Accessed 2024-02-01.

\bibitem[Bertsekas(1999)]{MR3444832}
D.~P. Bertsekas.
\newblock \emph{Nonlinear programming}.
\newblock Athena Scientific Optimization and Computation Series. Athena
  Scientific, Belmont, MA, second edition, 1999.
\newblock ISBN 1-886529-00-0.

\bibitem[Bertsimas and Georghiou(2015)]{Bertsimas2015}
D.~Bertsimas and A.~Georghiou.
\newblock Design of near optimal decision rules in multistage adaptive
  mixed-integer optimization.
\newblock \emph{Operations Research}, 63\penalty0 (3):\penalty0 610--627, 2015.

\bibitem[Bertsimas et~al.(2011)Bertsimas, Brown, and Caramanis]{Bertsimas2011}
D.~Bertsimas, D.~B. Brown, and C.~Caramanis.
\newblock Theory and applications of robust optimization.
\newblock \emph{SIAM Review}, 53\penalty0 (3):\penalty0 464--501, 2011.

\bibitem[Bitlislio{\u{g}}lu et~al.(2017)Bitlislio{\u{g}}lu, Gorecki, and
  Jones]{Bitlisliouglu2017}
A.~Bitlislio{\u{g}}lu, T.~T. Gorecki, and C.~N. Jones.
\newblock Robust tracking commitment.
\newblock \emph{IEEE Transactions on Automatic Control}, 62\penalty0
  (9):\penalty0 4451--4466, 2017.

\bibitem[Boyd and Vandenberghe(2004)]{Boyd2004}
S.~Boyd and L.~Vandenberghe.
\newblock \emph{Convex Optimization}.
\newblock Cambridge University Press, 2004.

\bibitem[Boyd et~al.(2011)Boyd, Parikh, and Chu]{Boyd2011}
S.~Boyd, N.~Parikh, and E.~Chu.
\newblock \emph{Distributed Optimization and Statistical Learning via the
  Alternating Direction Method of Multipliers}.
\newblock Now Publishers Inc, 2011.

\bibitem[Camponogara et~al.(2002)Camponogara, Jia, Krogh, and
  Talukdar]{Camponogara2002}
E.~Camponogara, D.~Jia, B.~H. Krogh, and S.~Talukdar.
\newblock Distributed model predictive control.
\newblock \emph{IEEE Control Systems Magazine}, 22\penalty0 (1):\penalty0
  44--52, 2002.

\bibitem[Chen et~al.(2020)Chen, Sim, and Xiong]{Chen2020robust}
Z.~Chen, M.~Sim, and P.~Xiong.
\newblock Robust stochastic optimization made easy with {RSOME}.
\newblock \emph{Management Science}, 66\penalty0 (8):\penalty0 3329--3339,
  2020.

\bibitem[Dantzig(2016)]{Dantzig2016}
G.~Dantzig.
\newblock \emph{Linear Programming and Extensions}.
\newblock Princeton University Press, 2016.

\bibitem[Darivianakis et~al.(2017{\natexlab{a}})Darivianakis, Georghiou,
  Eichler, Smith, and Lygeros]{Darivianakis2017a}
G.~Darivianakis, A.~Georghiou, A.~Eichler, R.~S. Smith, and J.~Lygeros.
\newblock Scalability through decentralization: A robust control approach for
  the energy management of a building community.
\newblock In \emph{IFAC World Congress}, pages 14314--14319,
  2017{\natexlab{a}}.

\bibitem[Darivianakis et~al.(2017{\natexlab{b}})Darivianakis, Georghiou, Smith,
  and Lygeros]{Darivianakis2017b}
G.~Darivianakis, A.~Georghiou, R.~S. Smith, and J.~Lygeros.
\newblock The power of diversity: {D}ata-driven robust predictive control for
  energy-efficient buildings and districts.
\newblock \emph{IEEE Transactions on Control Systems Technology}, 27\penalty0
  (1):\penalty0 132--145, 2017{\natexlab{b}}.

\bibitem[De~Castro and Paganini(2002)]{DeCastro2002}
G.~A. De~Castro and F.~Paganini.
\newblock Convex synthesis of localized controllers for spatially invariant
  systems.
\newblock \emph{Automatica}, 38\penalty0 (3):\penalty0 445--456, 2002.

\bibitem[Delage and Iancu(2015)]{Delage2015}
E.~Delage and D.~A. Iancu.
\newblock Robust multistage decision making.
\newblock In \emph{The Operations Research Revolution}, pages 20--46. INFORMS,
  2015.

\bibitem[Dunbar(2007)]{Dunbar2007}
W.~B. Dunbar.
\newblock Distributed receding horizon control of dynamically coupled nonlinear
  systems.
\newblock \emph{IEEE Transactions on Automatic Control}, 52\penalty0
  (7):\penalty0 1249--1263, 2007.

\bibitem[Dvijotham et~al.(2013)Dvijotham, Theodorou, Todorov, and
  Fazel]{Dvijotham2013}
K.~Dvijotham, E.~Theodorou, E.~Todorov, and M.~Fazel.
\newblock Convexity of optimal linear controller design.
\newblock In \emph{IEEE Conference on Decision and Control}, pages 2477--2482,
  2013.

\bibitem[Farina and Scattolini(2012)]{Farina2012}
M.~Farina and R.~Scattolini.
\newblock Distributed predictive control: {A} non-cooperative algorithm with
  neighbor-to-neighbor communication for linear systems.
\newblock \emph{Automatica}, 48\penalty0 (6):\penalty0 1088--1096, 2012.

\bibitem[Fazelnia et~al.(2016)Fazelnia, Madani, Kalbat, and
  Lavaei]{Fazelnia2016}
G.~Fazelnia, R.~Madani, A.~Kalbat, and J.~Lavaei.
\newblock Convex relaxation for optimal distributed control problems.
\newblock \emph{IEEE Transactions on Automatic Control}, 62\penalty0
  (1):\penalty0 206--221, 2016.

\bibitem[Georghiou et~al.(2019{\natexlab{a}})Georghiou, Kuhn, and
  Wiesemann]{Georghiou2019a}
A.~Georghiou, D.~Kuhn, and W.~Wiesemann.
\newblock The decision rule approach to optimization under uncertainty:
  {M}ethodology and applications.
\newblock \emph{Computational Management Science}, 16\penalty0 (4):\penalty0
  545--576, 2019{\natexlab{a}}.

\bibitem[Georghiou et~al.(2019{\natexlab{b}})Georghiou, Tsoukalas, and
  Wiesemann]{Georghiou2019b}
A.~Georghiou, A.~Tsoukalas, and W.~Wiesemann.
\newblock Robust dual dynamic programming.
\newblock \emph{Operations Research}, 67\penalty0 (3):\penalty0 813--830,
  2019{\natexlab{b}}.

\bibitem[Giselsson and Rantzer(2013)]{Giselsson2013}
P.~Giselsson and A.~Rantzer.
\newblock On feasibility, stability and performance in distributed model
  predictive control.
\newblock \emph{IEEE Transactions on Automatic Control}, 59\penalty0
  (4):\penalty0 1031--1036, 2013.

\bibitem[Gorissen et~al.(2014)Gorissen, Blanc, den Hertog, and
  Ben-Tal]{Gorissen2014}
B.~L. Gorissen, H.~Blanc, D.~den Hertog, and A.~Ben-Tal.
\newblock Deriving robust and globalized robust solutions of uncertain linear
  programs with general convex uncertainty sets.
\newblock \emph{Operations Research}, 62\penalty0 (3):\penalty0 672--679, 2014.

\bibitem[Gorissen et~al.(2015)Gorissen, Yan{\i}ko{\u{g}}lu, and den
  Hertog]{Gorissen2015}
B.~L. Gorissen, {\.I}.~Yan{\i}ko{\u{g}}lu, and D.~den Hertog.
\newblock A practical guide to robust optimization.
\newblock \emph{Omega}, 53:\penalty0 124--137, 2015.

\bibitem[Goulart et~al.(2006)Goulart, Kerrigan, and Maciejowski]{Goulart2006}
P.~J. Goulart, E.~C. Kerrigan, and J.~M. Maciejowski.
\newblock Optimization over state feedback policies for robust control with
  constraints.
\newblock \emph{Automatica}, 42\penalty0 (4):\penalty0 523--533, 2006.

\bibitem[Hadjiyiannis et~al.(2011)Hadjiyiannis, Goulart, and
  Kuhn]{Hadjiyiannis2011}
M.~J. Hadjiyiannis, P.~J. Goulart, and D.~Kuhn.
\newblock An efficient method to estimate the suboptimality of affine
  controllers.
\newblock \emph{IEEE Transactions on Automatic Control}, 56\penalty0
  (12):\penalty0 2841--2853, 2011.

\bibitem[Ho(1972)]{Ho1972}
Y.-C. Ho.
\newblock Team decision theory and information structures in optimal control
  problems--{Part I}.
\newblock \emph{IEEE Transactions on Automatic Control}, 17\penalty0
  (1):\penalty0 15--22, 1972.

\bibitem[Jaillet et~al.(2016)Jaillet, Jena, Ng, and Sim]{Jaillet2016}
P.~Jaillet, S.~D. Jena, T.~S. Ng, and M.~Sim.
\newblock Satisficing awakens: {M}odels to mitigate uncertainty.
\newblock \emph{Available at Optimization Online}, 2016.

\bibitem[Keviczky et~al.(2006{\natexlab{a}})Keviczky, Borrelli, and
  Balas]{Keviczky2006}
T.~Keviczky, F.~Borrelli, and G.~J. Balas.
\newblock Decentralized receding horizon control for large scale dynamically
  decoupled systems.
\newblock \emph{Automatica}, 42\penalty0 (12):\penalty0 2105--2115,
  2006{\natexlab{a}}.

\bibitem[Keviczky et~al.(2006{\natexlab{b}})Keviczky, Borrelli, and
  Balas]{keviczky2006decentralized}
T.~Keviczky, F.~Borrelli, and G.~J. Balas.
\newblock Decentralized receding horizon control for large scale dynamically
  decoupled systems.
\newblock \emph{Automatica}, 42\penalty0 (12):\penalty0 2105--2115,
  2006{\natexlab{b}}.

\bibitem[Lamperski and Lessard(2015)]{Lamperski2015}
A.~Lamperski and L.~Lessard.
\newblock Optimal decentralized state-feedback control with sparsity and
  delays.
\newblock \emph{Automatica}, 58:\penalty0 143--151, 2015.

\bibitem[Langbort et~al.(2004)Langbort, Chandra, and D'Andrea]{Langbort2004}
C.~Langbort, R.~S. Chandra, and R.~D'Andrea.
\newblock Distributed control design for systems interconnected over an
  arbitrary graph.
\newblock \emph{IEEE Transactions on Automatic Control}, 49\penalty0
  (9):\penalty0 1502--1519, 2004.

\bibitem[Laustsen(2008)]{laustsen2008energy}
J.~Laustsen.
\newblock Energy efficiency requirements in building codes, energy efficiency
  policies for new buildings. iea information paper.
\newblock \emph{\url{https://www.osti.gov/etdeweb/servlets/purl/971038}}, 2008.
\newblock Accessed 2024-02-01.

\bibitem[Lavaei(2011)]{Lavaei2011}
J.~Lavaei.
\newblock Decentralized implementation of centralized controllers for
  interconnected systems.
\newblock \emph{IEEE Transactions on Automatic Control}, 57\penalty0
  (7):\penalty0 1860--1865, 2011.

\bibitem[Lee et~al.(1997)Lee, Padmanabhan, and Whang]{Lee1997}
H.~L. Lee, V.~Padmanabhan, and S.~Whang.
\newblock Information distortion in a supply chain: {T}he bullwhip effect.
\newblock \emph{Management Science}, 43\penalty0 (4):\penalty0 546--558, 1997.

\bibitem[Lin et~al.(2011)Lin, Fardad, and Jovanovic]{Lin2011}
F.~Lin, M.~Fardad, and M.~R. Jovanovic.
\newblock Augmented lagrangian approach to design of structured optimal state
  feedback gains.
\newblock \emph{IEEE Transactions on Automatic Control}, 56\penalty0
  (12):\penalty0 2923--2929, 2011.

\bibitem[Lin and Bitar(2016)]{Lin2016}
W.~Lin and E.~Bitar.
\newblock Performance bounds for robust decentralized control.
\newblock In \emph{IEEE American Control Conference}, pages 4323--4330, 2016.

\bibitem[Lofberg(2004)]{Lofberg2004}
J.~Lofberg.
\newblock {YALMIP}: {A} toolbox for modeling and optimization in {MATLAB}.
\newblock In \emph{IEEE International Conference on Robotics and Automation},
  pages 284--289, 2004.

\bibitem[Lovejoy(1998)]{Lovejoy1998}
W.~S. Lovejoy.
\newblock Integrated operations: {A} proposal for operations management
  teaching and research.
\newblock \emph{Production and Operations Management}, 7\penalty0 (2):\penalty0
  106--124, 1998.

\bibitem[Lucia et~al.(2015)Lucia, K{\"o}gel, and Findeisen]{Lucia2015}
S.~Lucia, M.~K{\"o}gel, and R.~Findeisen.
\newblock Contract-based predictive control of distributed systems with plug
  and play capabilities.
\newblock In \emph{IFAC World Congress}, volume~48, pages 205--211, 2015.

\bibitem[Magee and Boodman(1967)]{Magee1967}
J.~F. Magee and D.~M. Boodman.
\newblock \emph{Production Planning and Inventory Control}.
\newblock McGraw-Hill Education, 1967.

\bibitem[Mahajan et~al.(2012)Mahajan, Martins, Rotkowitz, and
  Y{\"u}ksel]{Mahajan2012}
A.~Mahajan, N.~C. Martins, M.~C. Rotkowitz, and S.~Y{\"u}ksel.
\newblock Information structures in optimal decentralized control.
\newblock In \emph{IEEE Conference on Decision and Control}, pages 1291--1306,
  2012.

\bibitem[Matni and Doyle(2013)]{Matni2013}
N.~Matni and J.~C. Doyle.
\newblock A dual problem in {$\mathcal{H}_2$} decentralized control subject to
  delays.
\newblock In \emph{IEEE American Control Conference}, pages 5772--5777, 2013.

\bibitem[Mayne et~al.(2000)Mayne, Rawlings, Rao, and Scokaert]{Mayne2000}
D.~Q. Mayne, J.~B. Rawlings, C.~V. Rao, and P.~O. Scokaert.
\newblock {Constrained model predictive control: Stability and optimality}.
\newblock \emph{Automatica}, 36\penalty0 (6):\penalty0 789--814, 2000.

\bibitem[Motee and Jadbabaie(2008)]{Motee2008}
N.~Motee and A.~Jadbabaie.
\newblock Optimal control of spatially distributed systems.
\newblock \emph{IEEE Transactions on Automatic Control}, 53\penalty0
  (7):\penalty0 1616--1629, 2008.

\bibitem[Nayyar et~al.(2010)Nayyar, Mahajan, and Teneketzis]{Nayyar2010}
A.~Nayyar, A.~Mahajan, and D.~Teneketzis.
\newblock Optimal control strategies in delayed sharing information structures.
\newblock \emph{IEEE Transactions on Automatic Control}, 56\penalty0
  (7):\penalty0 1606--1620, 2010.

\bibitem[Nayyar et~al.(2013)Nayyar, Mahajan, and Teneketzis]{Nayyar2013}
A.~Nayyar, A.~Mahajan, and D.~Teneketzis.
\newblock Decentralized stochastic control with partial history sharing: {A}
  common information approach.
\newblock \emph{IEEE Transactions on Automatic Control}, 58\penalty0
  (7):\penalty0 1644--1658, 2013.

\bibitem[Nohadani and Roy(2017)]{Nohadani2017}
O.~Nohadani and A.~Roy.
\newblock {Robust optimization with time-dependent uncertainty in radiation
  therapy}.
\newblock \emph{IISE Transactions on Healthcare Systems Engineering},
  7\penalty0 (2):\penalty0 81--92, 2017.

\bibitem[Nohadani and Sharma(2018)]{Nohadani2018}
O.~Nohadani and K.~Sharma.
\newblock Optimization under decision-dependent uncertainty.
\newblock \emph{SIAM Journal on Optimization}, 28\penalty0 (2):\penalty0
  1773--1795, 2018.

\bibitem[Oldewurtel et~al.(2012)Oldewurtel, Parisio, Jones, Gyalistras,
  Gwerder, Stauch, Lehmann, and Morari]{oldewurtel2012use}
F.~Oldewurtel, A.~Parisio, C.~N. Jones, D.~Gyalistras, M.~Gwerder, V.~Stauch,
  B.~Lehmann, and M.~Morari.
\newblock Use of model predictive control and weather forecasts for energy
  efficient building climate control.
\newblock \emph{Energy and buildings}, 45:\penalty0 15--27, 2012.

\bibitem[Qi et~al.(2004)Qi, Salapaka, Voulgaris, and Khammash]{Qi2004}
X.~Qi, M.~V. Salapaka, P.~G. Voulgaris, and M.~Khammash.
\newblock Structured optimal and robust control with multiple criteria: {A}
  convex solution.
\newblock \emph{IEEE Transactions on Automatic Control}, 49\penalty0
  (10):\penalty0 1623--1640, 2004.

\bibitem[Rantzer(2006{\natexlab{a}})]{Rantzer2006a}
A.~Rantzer.
\newblock Linear quadratic team theory revisited.
\newblock In \emph{IEEE American Control Conference}, pages 1637--1641,
  2006{\natexlab{a}}.

\bibitem[Rantzer(2006{\natexlab{b}})]{Rantzer2006b}
A.~Rantzer.
\newblock A separation principle for distributed control.
\newblock In \emph{IEEE Conference on Decision and Control}, pages 3609--3613,
  2006{\natexlab{b}}.

\bibitem[Richards and How(2004)]{Richards2004}
A.~Richards and J.~How.
\newblock A decentralized algorithm for robust constrained model predictive
  control.
\newblock In \emph{IEEE American Control Conference}, volume~5, pages
  4261--4266, 2004.

\bibitem[Rotkowitz and Lall(2005)]{Rotkowitz2005}
M.~Rotkowitz and S.~Lall.
\newblock A characterization of convex problems in decentralized control.
\newblock \emph{IEEE Transactions on Automatic Control}, 50\penalty0
  (12):\penalty0 1984--1996, 2005.

\bibitem[Scattolini(2009{\natexlab{a}})]{Scattolini2009}
R.~Scattolini.
\newblock Architectures for distributed and hierarchical model predictive
  control--{A} review.
\newblock \emph{Journal of Process Control}, 19\penalty0 (5):\penalty0
  723--731, 2009{\natexlab{a}}.

\bibitem[Scattolini(2009{\natexlab{b}})]{scattolini2009architectures}
R.~Scattolini.
\newblock Architectures for distributed and hierarchical model predictive
  control--a review.
\newblock \emph{Journal of Process Control}, 19\penalty0 (5):\penalty0
  723--731, 2009{\natexlab{b}}.

\bibitem[Spacey et~al.(2012)Spacey, Wiesemann, Kuhn, and Luk]{Spacey2012}
S.~A. Spacey, W.~Wiesemann, D.~Kuhn, and W.~Luk.
\newblock {Robust software partitioning with multiple instantiation}.
\newblock \emph{INFORMS Journal on Computing}, 24\penalty0 (3):\penalty0
  500--515, 2012.

\bibitem[Stewart et~al.(2010{\natexlab{a}})Stewart, Venkat, Rawlings, Wright,
  and Pannocchia]{Stewart2010}
B.~T. Stewart, A.~N. Venkat, J.~B. Rawlings, S.~J. Wright, and G.~Pannocchia.
\newblock Cooperative distributed model predictive control.
\newblock \emph{Systems \& Control Letters}, 59\penalty0 (8):\penalty0
  460--469, 2010{\natexlab{a}}.

\bibitem[Stewart et~al.(2010{\natexlab{b}})Stewart, Venkat, Rawlings, Wright,
  and Pannocchia]{stewart2010cooperative}
B.~T. Stewart, A.~N. Venkat, J.~B. Rawlings, S.~J. Wright, and G.~Pannocchia.
\newblock Cooperative distributed model predictive control.
\newblock \emph{Systems \& Control Letters}, 59\penalty0 (8):\penalty0
  460--469, 2010{\natexlab{b}}.

\bibitem[Sturzenegger et~al.(2015)Sturzenegger, Gyalistras, Morari, and
  Smith]{sturzenegger2015model}
D.~Sturzenegger, D.~Gyalistras, M.~Morari, and R.~S. Smith.
\newblock Model predictive climate control of a swiss office building:
  {I}mplementation, results, and cost--benefit analysis.
\newblock \emph{IEEE Transactions on Control Systems Technology}, 24\penalty0
  (1):\penalty0 1--12, 2015.

\bibitem[Swigart and Lall(2014)]{Swigart2014}
J.~Swigart and S.~Lall.
\newblock Optimal controller synthesis for decentralized systems over graphs
  via spectral factorization.
\newblock \emph{IEEE Transactions on Automatic Control}, 59\penalty0
  (9):\penalty0 2311--2323, 2014.

\bibitem[Trodden and Richards(2010)]{Trodden2010}
P.~Trodden and A.~Richards.
\newblock Distributed model predictive control of linear systems with
  persistent disturbances.
\newblock \emph{International Journal of Control}, 83\penalty0 (8):\penalty0
  1653--1663, 2010.

\bibitem[Trodden and Maestre(2017)]{Trodden2017}
P.~A. Trodden and J.~M. Maestre.
\newblock Distributed predictive control with minimization of mutual
  disturbances.
\newblock \emph{Automatica}, 77:\penalty0 31--43, 2017.

\bibitem[Tsay and Lovejoy(1999)]{Tsay1999}
A.~A. Tsay and W.~S. Lovejoy.
\newblock Quantity flexibility contracts and supply chain performance.
\newblock \emph{Manufacturing \& Service Operations Management}, 1\penalty0
  (2):\penalty0 89--111, 1999.

\bibitem[Tsitsiklis and Athans(1985)]{Tsitsiklis1985}
J.~Tsitsiklis and M.~Athans.
\newblock On the complexity of decentralized decision making and detection
  problems.
\newblock \emph{IEEE Transactions on Automatic Control}, 30\penalty0
  (5):\penalty0 440--446, 1985.

\bibitem[{US Energy Information Adminstration}(2017)]{California}
{US Energy Information Adminstration}.
\newblock California wholesale electricity prices are higher at the beginning
  and end of the day.
\newblock \emph{\url{https://www.eia.gov/todayinenergy/detail.php?id=32172}},
  2017.
\newblock Accessed 2022-06-01.

\bibitem[Venkat et~al.(2008)Venkat, Hiskens, Rawlings, and Wright]{Venkat2008}
A.~N. Venkat, I.~A. Hiskens, J.~B. Rawlings, and S.~J. Wright.
\newblock Distributed mpc strategies with application to power system automatic
  generation control.
\newblock \emph{IEEE Transactions on Control Systems Technology}, 16\penalty0
  (6):\penalty0 1192--1206, 2008.

\bibitem[Zecevic and Siljak(2010)]{Zecevic2010}
A.~Zecevic and D.~D. Siljak.
\newblock \emph{Control of Complex Systems: Structural Constraints and
  Uncertainty}.
\newblock Springer, 2010.

\bibitem[Zhang et~al.(2017)Zhang, Kamgarpour, Georghiou, Goulart, and
  Lygeros]{Zhang2017}
X.~Zhang, M.~Kamgarpour, A.~Georghiou, P.~Goulart, and J.~Lygeros.
\newblock Robust optimal control with adjustable uncertainty sets.
\newblock \emph{Automatica}, 75:\penalty0 249--259, 2017.

\end{thebibliography}

\newpage
\appendix

\section{Proofs}
\label{app::proof}
\setcounter{equation}{0}
\renewcommand{\theequation}{\Alph{section}.\arabic{equation}}

\begin{proof}[\large \bf Proof of Theorem \ref{thm::1}]
	
	We show that every feasible solution of Problem~\eqref{Semi-Centralizedb} is a feasible solution to Problem~\eqref{Centralizedb}. Let $ \bm \Phi_i $ for all $ i \in \mc M $ be any feasible policy in Problem~\eqref{Semi-Centralizedb}. Starting with $ \chi_{i,1} = x_{i,1}$, the state of agent $ i $ at time $ t $ are given as
	\begin{equation}\label{eq::thm1::eq1}
		\begin{array}{@{}r@{~}l}
			x_{i,t} &=\displaystyle A_{i,1}^t x_{i,1} + \sum_{\tau=1}^{t-1} \Big(A_{i,\tau+1}^t B_{i,\tau} [\chi_{j,\tau}(\bm \xi_{\oNj}^{\tau-1})]_{j\in \mc N_i} + A_{i,\tau+1}^t D_{i,\tau} \Phi_{i,\tau}(\bm \xi_{\oNi}^{\tau-1}) + A_{i,\tau+1}^t E_{i,\tau} \xi_{i,\tau} \Big) \\
			&=: \chi_{i,t}(\bm \xi_{\oNi}^{t-1})
		\end{array}
	\end{equation}
	where $ A_{i,\tau}^t = A_{i,\tau}A_{i,\tau+1}\dots A_{i,t-1} $ for $ \tau<t $ and $ A_{i,t}^t = I $. The last implication follows from the fact that $ \ol{\mc N}_i \supseteq \ol{\mc N}_j $ for all $ j \in \mc N_i $ since the network admits a partially nested structure. Given \eqref{eq::thm1::eq1}, it is easy to verify that for each agent $ i $ its dynamics and constraints in Problem~\eqref{Semi-Centralizedb} only depend on $ \bm \xi_{\oNi} $. Hence, any feasible solution to Problem~\eqref{Semi-Centralizedb} is also feasible to the following optimization problem:
	\begin{equation}\label{eq::thm1::eq2}
		\begin{array}{l}
			\text{minimize}  \;\; \displaystyle\sum\limits_{i = 1}^M \max\limits_{\bm \xi_\Mu \in \Xi} J_i(\bm x_i,\bm u_i) \\
			\!\!\!\left.\begin{array}{r@{~}l@{}}
				\text{subject to}
				&  \bm u_i =\bm \Phi_i(\bm \xi_{\oNi})  := [\Phi_{i,t}(\bm \xi_\oNi^{t-1})]_{t\in\mc T} \\
				& \bm x_{i} = f_{i}\big(\bm x_{\Ni}, \bm u_{i}, \bm \xi_{i}\big)\\
				& (\bm x_{i}, \bm u_{i}) \in \mc O_i
			\end{array}\right \rbrace \forall \bm \xi_\Mu \in \Xi_\Mu,\;
			\forall i \in \mc M
		\end{array}
	\end{equation}
	Additionally, they achieve the same objective value since they share the same objective function. This shows equivalence of Problem~\eqref{Semi-Centralizedb} and Problem \eqref{eq::thm1::eq2}. The relation between Problem~\eqref{Centralizedb} and Problem~\eqref{Semi-Centralizedb} stated in the theorem now follows immediately since the two problems share the same constraints and objective function, and the policies in Problem~\eqref{Semi-Centralizedb} are restricted compared to Problem~\eqref{Centralizedb} as they are functions of $\bm \xi_{\oNi}$ while the policies in Problem~\eqref{Centralizedb} are functions of~$\bm \xi_{\mc M}$.
\end{proof}

\begin{proof}[\large \bf Proof of Theorem \ref{thm::2}]
	
	The statement is proved by induction using similar theoretical tools to \cite[Prop. 2.1]{Hadjiyiannis2011}. The statement holds for $ t = 1 $ since the initial state, $ x_{i,1} $, is known for every $ i \in \mc M $; therefore, functions $ \psi_{i,1}(x_{i,1}, \bm \zeta_{\Ni,1}) $ and $ \Psi_{i,1}(\bm \zeta_{\Ni,1}) $ can always be constructed such
	\begin{equation}
		\psi_{i,1}(x_{i,1}, \bm \zeta_{\Ni,1}) = \Psi_{i,1}(\bm \zeta_{\Ni,1})
	\end{equation}
	
	Assume now that the statement holds for all $ 1 < \tau \leq t-1 $, i.e., there exist policies $ \psi_i(\cdot) $ and $ \Psi_i(\cdot) $ such that $ \psi_{i,\tau}(\bm x_i^{\tau}, \bm \zeta_\Ni^{\tau}) = \Psi_{i,\tau}(\bm \xi_i^{\tau-1}, \bm \zeta_\Ni^{\tau}) $. In the sequel, we show that the statement also holds for $ \tau = t $. From \eqref{eq::stateDynamics}, we have that
	\begin{equation}\label{eqqv1}
		\begin{array}{r@{}l}
			x_{i,t} &=\displaystyle A_{i,1}^t x_{i,1} + \sum_{\tau=1}^{t-1} \Big( A_{i,\tau+1}^t B_{i,\tau} \bm \zeta_{\Ni,\tau} + A_{i,\tau+1}^t D_{i,\tau} \Psi_{i,\tau}(\bm \xi_i^{\tau-1}, \bm \zeta_\Ni^\tau) +  A_{i,\tau+1}^t E_{i,\tau} \xi_{i,\tau} \Big) \\
			&=: \chi_{i,t}(\bm \xi_{i}^{t-1}, \bm \zeta_{\Ni}^{t-1}),
		\end{array}
	\end{equation}
	where $ A_{i,\tau}^t = A_{i,\tau}A_{i,\tau+1}\dots A_{i,t-1} $ for $ \tau<t $ and $ A_{i,t}^t = I $. Moreover, it holds that
	\begin{equation}\label{eqq1v1}
		\begin{array}{r@{}l}
			\xi_{i,t-1} &= E_{i,t-1}^{+}\big(x_{i,t} - A_{i,t-1} x_{i,t-1} + B_{j,t-1} \bm \zeta_{\Ni,t-1} + D_{i,t-1} \psi_{i,t-1}(\bm x_i^{t-1}, \bm \zeta_\Ni^{t-1})\big) \\
			&=: \rho_{i,t}(\bm x_{i}^{t}, \bm \zeta_\Ni^{t-1}),
		\end{array}
	\end{equation}
	where $ E^+_{i,t} := (E_{i,t}^\top E_{i,t})^{-1}E^\top_{i,t} $ is the left inverse of $E_{i,t}$ since it is full rank.
	
	The relation \eqref{eqqv1} implies that given a feasible policy $ \psi_{i,t}(\cdot) $ for Problem~\eqref{Decentralized}, we can construct a feasible policy for Problem~\eqref{Decentralizedb} as
	\begin{equation}\label{eqq2v1}
		\psi_{i,t}(\bm x_i^t, \bm \zeta_\Ni^t) = \psi_{i,t}(\chi_i^t(\bm \xi_{i}^{t-1},\bm \zeta_{\Ni}^t), \bm \zeta_\Ni^t):= \Psi_{i,t}(\bm \xi_i^{t-1}, \bm \zeta_\Ni^t).
	\end{equation}
	The claim follows from the fact that the composition of continuous differentiable functions is a continuous differentiable function. Hence, the policy $ \psi_{i,t}(\cdot) $ will also be feasible in Problem~\eqref{Decentralizedb} since the two problems have the same pointwise constraints. Additionally, they achieve the same objective value since they share the same objective function. 
	
	Similarly, the relation \eqref{eqq1v1} implies that given a feasible policy $ \Psi_{i}(\cdot) $ for Problem~\eqref{Decentralizedb}, we can construct a feasible policy for Problem~\eqref{Decentralized} as
	\begin{equation}\label{eqq3v1}
		\Psi_{i,t}(\bm \xi_i^{t-1}, \bm \zeta_\Ni^t) = \Psi_{i,t}(\rho_i^t(\bm x_{i}^{t},\bm \zeta_\Ni^{t-1}), \bm \zeta_\Ni^t) := \psi_{i,t}(\bm x_i^t, \bm \zeta_\Ni^t).
	\end{equation}
	The claim follows from the fact that the composition of continuous differentiable functions is a continuous differentiable function. Hence, the policy $ \Psi_{i,t}(\cdot) $ is also feasible in Problem~\eqref{Decentralized} since the two problems have the same pointwise constraints. Additionally, they achieve the same objective value since they share the same objective function. 
\end{proof}

\begin{proof}[\large \bf Proof of Theorem \ref{thm::3}]	
	We show that every feasible solution of Problem~\eqref{Decentralizedb} is feasible in Problem~\eqref{Semi-Centralizedb}. Let $ (\bm \Psi_i, \mc X_i) $ for all $ i \in \mc M $ be feasible in Problem~\eqref{Decentralizedb}. Since the state of agent $i$ evolve according to \eqref{eq::stateDynamics}, we can conclude that at time $t$ we have	 
	\begin{equation}
		\begin{array}{r@{}l}
			x_{i,t} &=\displaystyle A_{i,1}^t x_{i,1} + \sum_{\tau=1}^{t-1} \Big( A_{i,\tau+1}^t B_{i,\tau} \bm \zeta_{\Ni,\tau} + A_{i,\tau+1}^t D_{i,\tau} \Psi_{i,\tau}(\bm \xi_i^{\tau-1}, \bm \zeta_\Ni^\tau) + A_{i,\tau+1}^t E_{i,\tau} \xi_{i,\tau}  \Big)\\
			&=: \chi_{i,t}(\bm \xi_{i}^{t-1}, \bm \zeta_{\Ni}^{t-1}),
		\end{array}
	\end{equation}
	where where $ A_{i,\tau}^t = A_{i,\tau}A_{i,\tau+1}\dots A_{i,t-1} $ for $ \tau<t $ and $ A_{i,t}^t = I $.
	
	We note that $ \bs\chi_{i}(\bm \xi_{i},\bm \zeta_{\Ni}) = [\chi_{i,t}(\bm \xi_{i}^{t-1}, \bm \zeta_{\Ni}^{t-1})]_{t \in \mc T_+} \in \mc X_i $ for all $ \bm \xi_i \in \Xi_i $ and $ \bm \zeta_{\Ni} \in \mc X_\Ni $ due to the feasibility of Problem~\eqref{Decentralizedb}. To show that $ \bm \Psi_i $ is feasible in Problem~\eqref{Semi-Centralizedb}, we first construct the state of agent $i$ which evolves according to $\bm x_{i} = f_{i}\big(\bm  x_{\Ni}, \bm \Psi_{i}(\bs{\xi}_i, \bm \zeta_\Ni), \bm \xi_{i}\big)$. Starting with $\hat \chi_{i,1} = x_{i,1}$, we have that
	\begin{equation}\label{eq::c1}
		\begin{array}{r@{}l}
			&x_{i,t} =\displaystyle A_{i,1}^t x_{i,1} + \sum_{\tau=1}^{t-1} \Big(A_{i,\tau+1}^t B_{i,\tau} [\widehat \chi_{j,\tau}(\bm \xi_{\oNj}^{\tau-1}]_{j\in \mc N_i} + A_{i,\tau+1}^t D_{i,\tau} \Psi_{i,\tau}(\bm \xi_i^{\tau-1}, \bm \zeta_{\Ni}^{\tau}) + A_{i,\tau+1}^t E_{i,\tau} \xi_{i,\tau} \Big)\\
			&=\displaystyle A_{i,1}^t x_{i,1} + \sum_{\tau=1}^{t-1} \Big(A_{i,\tau+1}^t B_{i,\tau} [\widehat \chi_{j,\tau}(\bm \xi_{\oNj}^{\tau-1}]_{j\in \mc N_i} + A_{i,\tau+1}^t D_{i,\tau} \Psi_{i,\tau}(\bm \xi_i^{\tau-1}, [\widehat{\bm \chi}_{j}^\tau (\bm \xi_{\oNj}^{\tau-1}]_{j\in \mc N_i}) + A_{i,\tau+1}^t E_{i,\tau} \xi_{i,\tau} \Big)\\
			& = \chi_{i,t}(\bm \xi_i^{t-1},[\widehat \chi^{t-1}_{j}(\bm \xi_{\oNj}^{t-2})]_{j\in \mc N_i})\\
			& =: \widehat\chi_{i,t}(\bm \xi_{\oNi}^{t-1}).
		\end{array}
	\end{equation}
	where the implication follows because $\bs{\widehat\chi}_{i}([\bm \xi_{j}^{t-1}]_{j\in \oNi}) = [\widehat\chi_{i,t}(\bm \xi_{\oNi}^{t-1})]_{t \in \mc T_+}\in\mc X_i$ for all $ \bm \xi_\oNi \in \Xi_\oNi$ and $ i \in \mc M $.
	For each $i\in \mc M$, we consider the decision $\bs{\hat \Phi}_i(\bm \xi_\oNi)$ 
	defined through
	\begin{equation}\label{eq::c2}
		\widehat \Phi_{i,t}(\bm \xi_{\oNi}^{t-1}):= \Psi_{i,t}(\bm \xi_i^{t-1},[\widehat \chi^{t}_{j}(\bm \xi_{\oNj}^{t-1})]_{j\in \mc N_i})
	\end{equation}
	Notice that \eqref{eq::c2} defines a valid policy construction since $\bs{\widehat\chi}_{i}([\bm \xi_{j}^{t-1}]_{j\in \oNi})\in\mc X_i$ for all $ \bm \xi_\oNi \in \Xi_\oNi$. It remains to show that $\bm \Psi_{i}$ is feasible also for the constraints of Problem~\eqref{Semi-Centralizedb}. We do so using deduction, as follows:  
	\begin{equation}
		\begin{array}{rll}
			& \big(\bs\chi_{i}(\bm \xi_{i},\bm \zeta_{\Ni}), \bm \Psi_{i}(\bs{\xi}_i,\bm \zeta_{\Ni})\big) \in \mc O_i, & \forall \bm \xi_i \in {\Xi}_i, \forall \bm \zeta_\Ni \in \mc X_\Ni,\\
			\implies & \big(\bs\chi_{i}(\bm \xi_{i},[\bs{\widehat\chi}_{j}(\bm \xi_\oNj)]_{j \in \mc N_i}), \bm \Psi_{i}(\bs{\xi}_i,[\bs{\widehat\chi}_{j}(\bm \xi_\oNj)]_{j \in \mc N_i})\big)  \in \mc O_i, & \forall \bm \xi_\oNi \in {\Xi}_\oNi, \\
			\implies & \big(\bs{\widehat\chi}_{i}(\bm \xi_\oNi), \bs{\widehat\Phi}_{i}(\bm \xi_\oNi)\big)  \in \mc O_i, & \forall \bm \xi_\oNi \in {\Xi}_\oNi,
		\end{array}
	\end{equation}
	The first implication follows from \eqref{eq::c1} and the fact that $\bs{\widehat\chi}_{i}(\bm \xi_\oNi) \subseteq \mc X_i$ for all $\bs{\xi}_\oNi\in\Xi_\oNi$, while the second implication follows from \eqref{eq::c1} and \eqref{eq::c2}.
	Finally, this feasible solution attains a value for the objective function of Problem~\eqref{Decentralizedb} which is equal or larger than the value attained for the objective function of Problem~\eqref{Semi-Centralizedb}, that is
	\begin{equation*}
		\begin{array}{@{\,}r@{\,}l@{\,}}
			&\sum\limits_{i = 1}^M \max\limits_{\bm \xi_i \in \Xi_i, \bm \zeta_{\Ni} \in \mc X_{\Ni}} 
			J_i\big(\bs\chi_{i}(\bm \xi_{i},\bm \zeta_{\Ni}), \bm \Psi_{i}(\bs{\xi}_i,\bm \zeta_{\Ni})\big)\\
			\geq& \sum\limits_{i = 1}^M \max\limits_{\bm \xi_\oNi \in \Xi_\oNi}\; J_i\big(\bs\chi_{i}(\bm \xi_{i},[\bs{\widehat\chi}_{j}(\bm \xi_\oNj)]_{j \in \mc N_i}), \bm \Psi_{i}(\bs{\xi}_i,[\bs{\widehat\chi}_{j}(\bm \xi_\oNj)]_{j \in \mc N_i})\big) \\
			=& \sum\limits_{i = 1}^M \max\limits_{\bm \xi_\oNi \in \Xi_\oNi}\; J_i\big(\bs{\widehat\chi}_{i}(\bm \xi_\oNi), \bs{\widehat\Phi}_{i}(\bm \xi_\oNi)\big),
		\end{array}
	\end{equation*}
	where again the first implication follows from \eqref{eq::c1} and the fact that $\bs{\widehat\chi}_{i}(\bm \xi_\oNi) \subseteq \mc X_i$ for all $\bs{\xi}_\oNi\in\Xi_\oNi$, while the second implication follows from \eqref{eq::c1} and \eqref{eq::c2}.
\end{proof}

\begin{proof}[\large \bf Proof of Proposition~\ref{prop::1}]
	Theorem~\ref{thm::3} already established that a feasible solution of Problem~\eqref{Decentralizedb} is feasible in Problem~\eqref{Semi-Centralizedb}.  We will now show that a feasible solution in Problem~\eqref{Semi-Centralized} is feasible in  Problem~\eqref{Decentralized}. The result will then follow due to the relation between Problems~\eqref{Semi-Centralized} and \eqref{Semi-Centralizedb}, see \citep{Lin2016}, and  Problems~\eqref{Decentralized} to \eqref{Decentralizedb}, see Theorem~\ref{thm::2}.
	
	In a directed acyclic bipartite network, the agents can be split in two groups, the first layer agents where $\ol{\mc N}_i = \{i\}$ and the second layer agents where  $\ol{\mc N}_i = \{\mathcal{N}_i,i\}$. Sets $\mathcal{FL}$ and $\mathcal{SL}$ denote the first and second layer agents, respectively. Problem~\eqref{Decentralized} can then be explicitly written as  
	\begin{equation}\label{Decentralized_bipartite}
		\begin{array}{l}
			\text{~minimize~~} \displaystyle\sum\limits_{i \in \mc{FL}}  \max\limits_{\bm \xi_i \in \Xi_i} J_i(\bm x_i,\bm u_i)+\sum\limits_{i \in\mc{SL}} \max\limits_{\bm \xi_i \in \Xi_i, \bm \zeta_{\Ni} \in \mc X_{\Ni}} J_i(\bm x_i,\bm u_i) \\
			\left.\begin{array}{@{}r@{~}l@{}}
				\text{subject to }
				& \bm u_i = \bm {\psi}_{i}(\bm x_i) := [\psi_{i,t}(\bm x_i^t)]_{t \in \mc T}\\
				& \bm x_{i} = f_{i}\big(\bm u_{i}, \bm \xi_{i}\big)\\
				& (\bm x_{i}, \bm u_{i}) \in \mc O_i \\
				& \bm x_i \in \mc X_i, \; \mc X_i \in \mc{F}(\mc S_i)
			\end{array}\right \rbrace \forall \bm \xi_i \in \Xi_i,\;\forall i \in \mc{FL},\\
			\left.\begin{array}{@{}r@{~}l@{}}
				\phantom{\text{subject to }}
				& \bm u_i = \bm {\psi}_{i}(\bm x_i, \bm \zeta_{\Ni}) := [\psi_{i,t}(\bm x_i^t, \bm \zeta_\Ni^t)]_{t \in \mc T}\\
				& \bm x_{i} = f_{i}\big(\bm \zeta_{\Ni}, \bm u_{i}, \bm \xi_{i}\big)\\
				& (\bm x_{i}, \bm u_{i}) \in \mc O_i \\
			\end{array}\right \rbrace \begin{array}{@{}l}
				\forall \bm \zeta_{\Ni} \in \mc X_{\Ni}\\
				\forall \bm \xi_{\mc M} \in \Xi_{\mc M}
			\end{array}\;\forall i \in \mc{SL}, 
		\end{array}
	\end{equation}
	where the  constraints distinguish between  first and second layer agents. Notice that agents in the first layer are the ones communicating their state forecast sets $\mathcal{X}_i$ to their neighbors (constraint $\bm x_i\in\mathcal{X}_i$), while the agents on the second layer only receive these sets ($\bm x_{i} = f_{i}\big(\bm \zeta_{\Ni}, \bm u_{i}, \bm \xi_{i}\big)$, $\forall \bm \zeta_{\Ni} \in \mc X_{\Ni}$).
	
	Let $\phi_i$ for all $i\in\mathcal{M}$ be a feasible policy for Problem~\eqref{Semi-Centralized}. It is easy to see that policy is feasible in $(f_{i}\big(\phi_{i}(\bm x_i), \bm \xi_{i}\big), \bm u_{i}) \in \mc O_i\quad \forall \bm \xi_i\in\Xi_i,\,i\in\mc{FL}$ as Problems~\eqref{Semi-Centralized} and  \eqref{Decentralized} share the same constraints. We set
	\begin{equation}\label{def:information_set_proof}
		\mc X_i:=\{\bm x_i\in\mb R^{N_{x,i}}\;:\; \bm x_{i} = f_{i}\big(\phi_{i}(\bm x_i), \bm \xi_{i}\big)\;\;\forall \bm \xi_i \in \Xi_i\}\quad \forall i \in \mc{FL},
	\end{equation}
	which satisfies $\mc X_i\in\mc F(\mc S_i)$  where $\mc S_i=\mb R^{N_{x,i}}$ for all $i \in \mc{FL}$.
	By the feasibility of $\phi_i$ in Problem~\eqref{Semi-Centralized}, we have
	\begin{equation*}
		\begin{array}{lll}
			
			&\left.\begin{array}{l}
				\bm x_{j} = f_{j}\big(\phi_{j}(\bm x_j), \bm \xi_{j}\big),\;\forall \,j\in\mc{N}_i\\
				(f_i(\bm x_{\mathcal{N}_i},\phi_i(\bm x_i,\bm x_{\mathcal{N}_i}),\bm \xi_i),\phi_i(\bm x_i,\bm x_{\mathcal{N}_i}))\in\mc O_i
			\end{array}\right\}& \forall \bm \xi_\oNi \in \Xi_{\oNi},\;\forall i \in \mc{SL},\\

			\iff&\left.\begin{array}{l}
				(f_i(\bm x_{\mathcal{N}_i},\phi_i(\bm x_i,\bm x_{\mathcal{N}_i}),\bm \xi_i),\phi_i(\bm x_i,\bm x_{\mathcal{N}_i}))\in\mc O_i
			\end{array}\right\}& 
			\begin{array}{l}
				\forall  [\bm x_{j}]_{j\in\mc N_i} = [f_{j}\big(\phi_{j}(\bm x_j), \bm \xi_{j}\big)]_{j\in\mc N_i}\\\forall \bm \xi_\oNi \in \Xi_{\oNi} 
			\end{array},\,\forall i \in \mc{SL},\\
			
			\iff&\left.\begin{array}{l}
				(f_i(\bm \zeta_{\mathcal{N}_i},\phi_i(\bm x_i,\bm \zeta_{\mathcal{N}_i}),\bm \xi_i),\phi_i(\bm x_i,\bm \zeta_{\mathcal{N}_i}))\in\mc O_i
			\end{array}\right\}& 
			\begin{array}{l}
				\forall \bm \zeta_{\Ni} \in \mc X_{\Ni}\\\forall \bm \xi_{\mc M} \in \Xi_{\mc M} 
			\end{array},\,\forall i \in \mc{SL},\\
			
		\end{array}
	\end{equation*}
	where the first equivalence hold since for fixed policy $\phi_i$ the dynamics of all agents in the first layer are uniquely determined by the uncertain parameters $\bm \xi_{i}\in\Xi_i$, $i\in\mc{FL}$, while the second equivalence holds by the definition of $\mc X_i$ in \eqref{def:information_set_proof}. Thus, $\phi_i$ is feasible in  Problem~\eqref{Decentralized}. 
	
	We next show that $\phi_i$ achieves the same optimal value in both problems. 
	\begin{equation}\label{cor:obj}
		\begin{array}{ll}
			&\displaystyle\sum\limits_{i = 1}^M \max\limits_{\bm \xi_i \in \Xi_\oNi} J_i(\bm x_i,\bm \phi_{i}(\bm x_\oNi)) \\

			=&\displaystyle\sum\limits_{i\in\mc{FL}} \max\limits_{\bm \xi_i\in \Xi_i} J_i(\bm x_i,\bm \phi_{i}(\bm x_i)) 
			
			+ \sum\limits_{i\in\mc{SL}} \max\limits_{\bm \xi_\oNi \in \Xi_\oNi} J_i(\bm x_i,\bm \phi_{i}(\bm x_i,{\bm x}_{{\mc N}_i})) \\

			=&\displaystyle\sum\limits_{i\in\mc{FL}} \max\limits_{\bm \xi_i\in \Xi_i} J_i(\bm x_i,\bm \phi_{i}(\bm x_i)) 
			
			+ \sum\limits_{i\in\mc{SL}} \max\limits_{\bm \xi_\oNi \in \Xi_\oNi,\, [\bm x_{j}]_{j\in\mc N_i} = [f_{j}\big(\phi_{j}(\bm x_j), \bm \xi_{j}\big)]_{j\in\mc N_i}} J_i(\bm x_i,\bm \phi_{i}(\bm x_i,{\bm x}_{{\mc N}_i}))\\

			=&\displaystyle\sum\limits_{i\in\mc{FL}} \max\limits_{\bm \xi_i\in \Xi_i} J_i(\bm x_i,\bm \phi_{i}(\bm x_i)) 
			
			+ \sum\limits_{i\in\mc{SL}} \max\limits_{\bm \xi_i \in \Xi_i,\,\bm \zeta_{\Ni} \in \mc X_{\Ni}} J_i(\bm x_i,\bm \phi_{i}(\bm x_i,{\bm \zeta}_{{\mc N}_i}))\\
		\end{array}
	\end{equation}
	Starting from the objective of  Problem~\eqref{Semi-Centralized}, the first equivalence rewrites the objective function it in terms of the first and second layer agents. Since for fixed $\phi_i$ the dynamics of the first layer agents  are uniquely determined by the uncertain parameters $\bm \xi_{i}\in\Xi_i$, $i\in\mc{FL}$, the second equality re-expresses the dynamics of ${\bm x}_{{\mc N}_i}$ in terms of $\bm \xi_{i}\in\Xi_i$, $i\in\mc{FL}$. Finally, in the third equality we substitute the definition of $\mc X_i$ in \eqref{def:information_set_proof}. Notice that the uncertain parameters governing $\mc X_i$ are those affective first layer agents, thus making $\mc X_{\mc{FL}}$ and $\Xi_{\mc{SL}}$ rectangular. The last expression in \eqref{cor:obj} coincides with the objective of Problem~\eqref{Decentralized}.
\end{proof}

\begin{proof}[\large \bf Proof of Proposition \ref{prop::nCc}]
	The recession cone of the set $ \mc S_i $ is defined as $ \textrm{recc}(\mc S_i) = \{\bm \nu_i \in \mb R^{n_x^i}: \bm s_i + \lambda \bm \nu_i \in \mc S_i, \forall \bm s_i \in \mc S_i, \,\lambda \ge 0 \} $. The fact that $ \mc S_i $ is bounded implies that the recession cone of $ \mc S_i $ is empty, i.e., $ \textrm{recc}(\mc S_i) = \{0\} $. We now show that,
	\begin{equation*}
		\mc X_{\FS} = \Big\{(\bm x_i, y_i, z_i)\,:\, \exists \bm s_i \in\mathbb{R}^{N_{x,i}} \text{ s.t. }\bm x_{i} = \sum_{k=1}^{K_i} y_{i,k} P_{i,k} \bm s_{i} + z_{i},\; G_{i,k} P_{i,k} \bm s_{i} \preceq_{\mc K_{i,k}}  g_{i,k},\; k=1,\ldots,K_i \Big\} 
	\end{equation*}
	is equivalent to
	\begin{equation*}
		\wh{\mc X}_{\FS} =\Big\{(\bm x_i, y_i, z_i)\,:\, \exists \bm  \nu_{i,k}\in\mathbb{R}^{N_{x,i}} \text{ s.t. } \bm x_{i} = \sum_{k=1}^{K_i} P_{i,k} \bm \nu_{i,k} + z_{i},\; G_{i,k} P_{i,k} \bm \nu_{i,k}  \preceq_{\mc K_{i,k}}  y_{i,k}  g_{i,k},\;k=1,\ldots,K_i \Big\}
	\end{equation*}
	It is easy to verify that this in the case where $ y_{i,k} $ are positive scalar by using the substitution $\bs{\nu}_{i,k} = y_{i,k} \bs{s}_i$. In the case that any $ y_{i,k} = 0 $ then it remains to show that the only feasible solution is $\bs{\nu}_{i,k} = 0 $ so that the equality $\bs{\nu}_{i,k} = y_{i,k} \bs{s}_i$  holds. Assume that this is not the case, i.e., there exist $ \bs{\nu}_{i,k} \neq 0 $. Then, $ \bs{\nu}_{i,k} \in \textrm{recc}(S_i) $ which means that the $ \mc S_i $ recedes in the direction of  $ \bs{\nu}_{i,k} $. However, this is a contradicts the boundedness of $ \mc S_i $. 
	The substitution $\bs{\nu}_{i,k} = y_{i,k} \bs{s}_i$ was first proposed by George Dantzig in \citep{Dantzig2016}, and a similar proof also appear in  \citep{Gorissen2014}.
\end{proof}

\subsection*{Preliminaries for the proof of Theorem \ref{thm::5}}
The following lemma establishes constraint satisfaction between Problem~\eqref{DecentralizedXab_GeneralY} and Problem~\eqref{DecentralizedFinal_GeneralY}.
\begin{lemma}\label{lem::1}
	Given vectors $ y_i $ and $ z_i $ such that $ \wh{\mc X}_i(y_i, z_i) $, then for any two functions $ f_{i,t} $ and $ g_{i,t} $, it holds:
	\begin{equation}\label{eq::op}
		\begin{array}{r@{\,}ll}
			& f_{i,t}(\bm \zeta_\Ni^t, \bm \xi_i^t) \leq 0,& \forall \bm \zeta_\Ni \in \wh{\mc X}_\Ni(y_\Ni, z_\Ni), \,\forall \bm \xi_i \in \Xi_i, \\
			\Rightarrow & f_{i,t}\big([R_j^t(\bm s_j^t)]_{j\in \mc N_i}), \bm \xi_i^t\big) \leq 0, &\forall \bm s_\Ni \in \mc S_\Ni,\,\forall \bm \xi_i \in \Xi_i,
		\end{array}
	\end{equation}
	and
	\begin{equation}\label{eq::opInv}
		\begin{array}{r@{\,}ll}
			& g_{i,t}(\bm s_\Ni^t, \bm \xi^t)\leq 0,& \forall \bm s_\Ni \in \mc S_\Ni,\,\forall \bm \xi_i \in \Xi_i, \\
			\Rightarrow & g_{i,t}\big([L_j^t({\bm \zeta_j^t})]_{j\in \mc N_i}, \bm \xi_i^t\big) \leq 0, & \forall \bm \zeta_\Ni \in \wh{\mc X}_\Ni(y_\Ni, z_\Ni), \,\forall \bm \xi_i \in \Xi_i.
		\end{array}
	\end{equation}
\end{lemma}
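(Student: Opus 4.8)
The plan is to prove the two implications \eqref{eq::op} and \eqref{eq::opInv} separately; each is a direct change-of-variables argument that transports a point between $\mc S_{\Ni}$ and $\wh{\mc X}_\Ni(y_\Ni,z_\Ni)$ through the maps $R_j$ and $L_j$ of \eqref{app::maps}. Two elementary facts should be recorded first. (i) For fixed shaping data $(Y_j,z_j)$, the $\bm x_j$-section of $\wh{\mc X}_j(y_j,z_j)$ coincides with the image $\{Y_j\bm s_j+z_j:\bm s_j\in\mc S_j\}$ (this is the definition~\eqref{eq::SFSv1_nonconvex} together with Proposition~\ref{prop::nCc}); since $\wh{\mc X}_\Ni(y_\Ni,z_\Ni)=\bigtimes_{j\in\mc N_i}\wh{\mc X}_j(y_j,z_j)$ and $\mc S_\Ni=\bigtimes_{j\in\mc N_i}\mc S_j$, the statement carries over to the product sets coordinate by coordinate. (ii) Both $R_j$ and $L_j$ act block-diagonally in the time index (they are built from the per-stage maps $R_{j,t},L_{j,t}$), so $R_j^t(\bm s_j^t)=\bigl(R_j(\bm s_j)\bigr)^t$ and $L_j^t(\bm\zeta_j^t)=\bigl(L_j(\bm\zeta_j)\bigr)^t$; in particular the substitutions below respect the up-to-stage-$t$ dependence of $f_{i,t}$ and $g_{i,t}$, and the uncertainty $\bm\xi_i$ plays a purely passive role and is carried along unchanged.

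For \eqref{eq::op} I would fix an arbitrary $\bm s_\Ni\in\mc S_\Ni$ and $\bm\xi_i\in\Xi_i$ and set $\bm\zeta_j:=R_j(\bm s_j)=Y_j\bm s_j+z_j$ for each $j\in\mc N_i$, with $\bm\zeta_\Ni:=[\bm\zeta_j]_{j\in\mc N_i}$. By fact~(i), $\bm\zeta_\Ni\in\wh{\mc X}_\Ni(y_\Ni,z_\Ni)$. Applying the hypothesis of \eqref{eq::op} at this $\bm\zeta_\Ni$ and this $\bm\xi_i$ gives $f_{i,t}(\bm\zeta_\Ni^t,\bm\xi_i^t)\le 0$, which by fact~(ii) is precisely $f_{i,t}\bigl([R_j^t(\bm s_j^t)]_{j\in\mc N_i},\bm\xi_i^t\bigr)\le 0$. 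Since $\bm s_\Ni$ and $\bm\xi_i$ were arbitrary, this is the claimed conclusion.

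For \eqref{eq::opInv} I would run the argument in the opposite direction: fix $\bm\zeta_\Ni\in\wh{\mc X}_\Ni(y_\Ni,z_\Ni)$ and $\bm\xi_i\in\Xi_i$, and set $\bm s_j:=L_j(\bm\zeta_j)=Y_j^{+}(\bm\zeta_j-z_j)$. The crux is to verify $\bm s_j\in\mc S_j$. By fact~(i) there is $\bar{\bm s}_j\in\mc S_j$ with $\bm\zeta_j=Y_j\bar{\bm s}_j+z_j=R_j(\bar{\bm s}_j)$; since the shaping matrices $Y_j$ are of full column rank we have $Y_j^{+}Y_j=I$, so $\bm s_j=Y_j^{+}(Y_j\bar{\bm s}_j)=\bar{\bm s}_j\in\mc S_j$ (equivalently, the stated left-inverse relation $R_j(L_j(\bm\zeta_j))=\bm\zeta_j=R_j(\bar{\bm s}_j)$ together with injectivity of $R_j$ force $\bm s_j=\bar{\bm s}_j$). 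Thus $\bm s_\Ni:=[\bm s_j]_{j\in\mc N_i}\in\mc S_\Ni$; the hypothesis of \eqref{eq::opInv} gives $g_{i,t}(\bm s_\Ni^t,\bm\xi_i^t)\le 0$, and fact~(ii) rewrites this as $g_{i,t}\bigl([L_j^t(\bm\zeta_j^t)]_{j\in\mc N_i},\bm\xi_i^t\bigr)\le 0$, as required.

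I expect the main obstacle to be exactly this ``crux'' step in \eqref{eq::opInv}: making rigorous the sense in which $L_j$ inverts $R_j$ on $\wh{\mc X}_j(y_j,z_j)$, and in particular confirming that $L_j$ maps back \emph{into} $\mc S_j$ and not merely into some larger set that contains it. This is where the rank/structure assumptions on $Y_j$ enter --- they are what make $Y_j^{+}=(Y_j^\top Y_j)^{-1}Y_j^\top$ a genuine left inverse with $Y_j^{+}Y_j=I$ and the preimage unique; without such an assumption the minimum-norm preimage returned by $Y_j^{+}$ need not coincide with the element of $\mc S_j$ that generated $\bm\zeta_j$. The remaining ingredients --- the product structure of $\mc S_\Ni$ and $\wh{\mc X}_\Ni$, the time-block-diagonal form of $R_j$ and $L_j$, and the passive role of $\bm\xi_i$ --- are routine bookkeeping.
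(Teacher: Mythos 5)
Your proof is correct and follows essentially the same route as the paper's: the paper establishes \eqref{eq::op} by noting that $[R_j^t(\bm s_j^t)]_{j\in\mc N_i}\in\wh{\mc X}_\Ni(y_\Ni,z_\Ni)$ for every $\bm s_\Ni\in\mc S_\Ni$ (phrased as a one-line contradiction rather than your direct substitution) and dismisses \eqref{eq::opInv} with ``follows similar arguments.'' Your explicit verification that $L_j$ maps $\wh{\mc X}_j(y_j,z_j)$ back \emph{into} $\mc S_j$ --- which hinges on $Y_j^{+}Y_j=I$, consistent with the paper's formula $Y^+=(Y^\top Y)^{-1}Y^\top$ requiring full column rank --- is a detail the paper leaves implicit, and supplying it is the right call.
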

\begin{proof}
	We prove \eqref{eq::op} by contradiction. Assume that $ f_{i,t}(\bm \zeta_\Ni^t, \bm \xi_i^t) \leq 0, \forall \bm \zeta_\Ni \in \wh{\mc X}_\Ni(y_\Ni, z_\Ni), \,\forall \bm \xi_i \in \Xi_i, $ and there exist $ \bm s_\Ni \in \mc S_\Ni $ such that $ f_{i,t}\big([R_j^t(\bm s_j^t)]_{j\in \mc N_i}), \bm \xi_i^t\big) > 0 $. Considering that $ [R_j^t(\bm s_j^t)]_{j \in \mc N_i} \in \wh{\mc X}(y_\Ni, z_\Ni) $ for all $ \bm s_\Ni \in \mc S_\Ni $ by construction, this leads to a contradiction. 
	The proof of \eqref{eq::opInv} follows similar arguments. 	
\end{proof}

\begin{proof}[\large \bf Proof of Theorem \ref{thm::5}]
	We show that every feasible solution of Problem~\eqref{DecentralizedFinal_GeneralY} is feasible in Problem~\eqref{DecentralizedXab_GeneralY}. Let $ (\wh{\bm \Gamma}_i, \wh{\mc X}_i) $ for all $ i \in \mc M $ be a feasible solution in Problem~\eqref{DecentralizedFinal_GeneralY}. Since the state of agent $i$ evolve according to \eqref{eq::stateDynamics}, we can conclude that at time $t$ we have	 
	\begin{equation*}
		\begin{array}{r@{}l}
			
			x_{i,t} &=\displaystyle A_{i,1}^t x_{i,1} + \sum_{\tau=1}^{t-1} \Big(A_{i,\tau+1}^t B_{i,\tau} \bm \zeta_{\mc N_i,\tau} + A_{i,\tau+1}^t D_{i,\tau} \wh{\Gamma}_{i,\tau}(\bm \xi_i^{\tau-1}, \bm s_\Ni^\tau) + A_{i,\tau+1}^t E_{i,\tau} \xi_{i,\tau} \Big) \\
			
			&=\displaystyle A_{i,1}^t x_{i,1} + \sum_{\tau=1}^{t-1} \Big(A_{i,\tau+1}^t B_{i,\tau} (Y_{\Ni,\tau} \bm s_{\Ni,\tau} + z_{\Ni,\tau}) + A_{i,\tau+1}^t D_{i,\tau} \wh{\Gamma}_{i,\tau}(\bm \xi_i^{\tau-1}, \bm s_\Ni^\tau) + A_{i,\tau+1}^t E_{i,\tau} \xi_{i,\tau} \Big) \\
			&=: \wh{\chi}_{i,t}(\bm \xi_{i}^{t-1}, \bm s_{\Ni}^{t-1}),
		\end{array}
	\end{equation*}
	where $ A_{i,\tau}^t = A_{i,\tau}A_{i,\tau+1}\dots A_{i,t-1} $ for $ \tau<t $ and $ A_{i,t}^t = I $. To show that $ \wh{\bm \Gamma}_i $ is feasible in Problem~\eqref{DecentralizedXab_GeneralY}, we first construct the state of agent $i$ which evolves according to $\bm x_{i} = f_{i}\big(\bm \zeta_{\Ni}, \wh{\bm \Gamma}_{i}(\bs{\xi}_i,\bm s_\Ni), \bm \xi_{i}\big)$. Starting with $\widetilde \chi_{i,1} = x_{i,1}$, we have that
	\begin{equation}\label{eq::c1_p1}
		\begin{array}{r@{}l}
			x_{i,t} &=\displaystyle A_{i,1}^t x_{i,1} + \sum_{\tau=1}^{t-1} \Big(A_{i,\tau+1}^t B_{i,\tau} \bm \zeta_{\Ni,\tau} + A_{i,\tau+1}^t D_{i,\tau} \wh{\Gamma}_{i,\tau}(\bm \xi_i^{\tau-1}, \bm s_\Ni^\tau) + A_{i,\tau+1}^t E_{i,\tau} \xi_{i,\tau} \Big) \\
			&=\displaystyle A_{i,t-1}^t x_{i,1} + \sum_{\tau=1}^{t-1} \Big(A_{i,\tau+1}^t B_{i,\tau} \bm \zeta_{\Ni} + A_{i,\tau+1}^t D_{i,\tau} \wh{\Gamma}_{i,\tau}(\bm \xi_i^{\tau-1}, [L^{\tau}_j(\bm \zeta_j^{\tau})]_{j\in \mc N_i}) + A_{i,\tau+1}^t E_{i,\tau} \xi_{i,\tau} \Big)\\
			& = \wh{\chi}_{i,t}\left(\bm \xi_i^{t-1},[L^{t-1}_j(\bm \zeta_j^{t-1})]_{j\in \mc N_i}\right)\\
			& =: \wt{\chi}_{i,t}\left(\bm \xi_i^{t-1},\bm \zeta_\Ni^{t-1}\right),
		\end{array}
	\end{equation}
	where the implications follow due to the mapping \eqref{app::map2}.
	For each $i\in \mc M$, we consider the decision $ \wt{\bm \Psi}_{i}(\bm \xi_i, \bm \zeta_{\Ni}) $ defined through
	\begin{equation}\label{eq::c2_p1}
		\wt{\Psi}_{i,t}\left(\bm \xi_i^{t-1},\bm \zeta_\Ni^{t}\right)= \wh{\Gamma}_{i,t}\left(\bm \xi_i^{t-1},[L^{t}_j(\bm \zeta_j^{t})]_{j\in \mc N_i}\right).
	\end{equation}
	Notice that \eqref{eq::c2_p1} defines a valid policy construction due to the mapping \eqref{app::map2}. It remains to show that $\wh{\bm \Gamma}_{i}$ is feasible also for the constraints of Problem~\eqref{DecentralizedXab_GeneralY}. We do so using deduction, as follows:  
	\begin{equation}
		\begin{array}{rll}
			& \big(\wh{\bs\chi}_{i}(\bm \xi_{i},\bm s_{\Ni}), \wh{\bm \Gamma}_{i}(\bs{\xi}_i,\bm s_{\Ni})\big) \in \mc O_i, & \forall \bm \xi_i \in {\Xi}_i, \forall \bm s_\Ni \in \mc S_\Ni,\\
			\implies & \big(\wh{\bs\chi}_{i}(\bm \xi_{i},[L_j(\bm \zeta_j)]_{j \in \mc N_i}),\wh{\bm \Gamma}_{i}(\bs{\xi}_i,[L_j(\bm \zeta_j)]_{j \in \mc N_i})\big) \in \mc O_i,& \forall \bm \xi_i \in {\Xi}_i, \forall \bm \zeta_\Ni \in \wh{\mc X}_\Ni,\\
			\implies &\big(\wt{\bm \chi}_{i}(\bm \xi_i,\bm \zeta_\Ni),\wt{\bm \Psi}_{i}(\bm \xi_i, \bm \zeta_{\Ni})\big) \in \mc O_i,& \forall \bm \xi_i \in {\Xi}_i, \forall \bm \zeta_\Ni \in \wh{\mc X}_\Ni,
		\end{array}
	\end{equation}
	where the implications directly follow from \eqref{eq::c1_p1} and \eqref{eq::c2_p1}, and Lemma \ref{lem::1}. Same reasoning applies to all constraints in the problem formulation. This feasible solution attains the same value, $ \ell $, for the objective functions of Problem~\eqref{DecentralizedFinal_GeneralY} and Problem~\eqref{DecentralizedXab_GeneralY}, that is:
	\begin{equation}
		\begin{array}{rl}
			\ell & = \sum\limits_{i = 1}^M \max\limits_{\bm \xi_i \in \Xi, \bm s_\Ni \in \mc S_\Ni} J_i\big(\wh{\bs\chi}_{i}(\bm \xi_{i},\bm s_{\Ni}), \wh{\bm \Gamma}_{i}(\bs{\xi}_i,\bm s_{\Ni})\big) \\& = \left \lbrace \begin{array}{l}
				J_i\big(\wh{\bs\chi}_{i}(\bm \xi_{i},\bm s_{\Ni}), \wh{\bm \Gamma}_{i}(\bs{\xi}_i,\bm s_{\Ni})\big) \leq\ell_i,~~ \forall \bm \xi_i \in {\Xi}_i, \forall \bm s_\Ni \in \mc S_\Ni\\
				\sum_{i = 1}^{M} \ell_i = \ell,
			\end{array} \right \rbrace \\
			& = \left \lbrace \begin{array}{l}
				J_i\big(\wh{\bs\chi}_{i}(\bm \xi_{i},[L_j(\bm \zeta_j)]_{j \in \mc N_i}),\wh{\bm \Gamma}_{i}(\bs{\xi}_i,[L_j(\bm \zeta_j)]_{j \in \mc N_i})\big) \leq\ell_i,~~ \forall \bm \xi_i \in {\Xi}_i, \forall \bm \zeta_\Ni \in \wh{\mc X}_\Ni\\
				\sum_{i = 1}^{M} \ell_i = \ell,
			\end{array} \right \rbrace \\
			& = \sum\limits_{i = 1}^M \max\limits_{\bm \xi_i \in {\Xi}_i, \bm \zeta_\Ni \in \wh{\mc X}_\Ni} J_i\big(\wt{\bm \chi}_{i}(\bm \xi_i,\bm \zeta_\Ni),\wt{\bm \Psi}_{i}(\bm \xi_i, \bm \zeta_{\Ni})\big) = \ell.
		\end{array}
	\end{equation}
	The implications directly follow from \eqref{eq::c1_p1} and \eqref{eq::c2_p1}, and Lemma \ref{lem::1}.
	
	Similarly, we now show that every feasible solution of Problem~\eqref{DecentralizedXab_GeneralY} is feasible in Problem~\eqref{DecentralizedFinal_GeneralY}. Let $ (\wt{\bm \Psi}_i, \wh{\mc X}_i) $ for all $ i \in \mc M $ be feasible in Problem~\eqref{DecentralizedXab_GeneralY}. Since the state of agent $i$ evolve according to \eqref{eq::stateDynamics}, we can conclude that at time $t$ we have	 
	\begin{equation}
		\begin{array}{r@{}l}
			x_{i,t} &=\displaystyle A_{i,1}^t x_{i,1} + \sum_{\tau=1}^{t-1} \Big(A_{i,\tau+1}^t B_{i,\tau} \bm \zeta_{\Ni,\tau} + A_{i,\tau+1}^t D_{i,\tau} \wt{\Psi}_{i,\tau}(\bm \xi_i^{\tau-1}, \bm \zeta_\Ni^\tau) + A_{i,\tau+1}^t E_{i,\tau} \xi_{i,\tau} \Big)\\
			&=: \wt{\chi}_{i,t}(\bm \xi_{i}^{t-1}, \bm \zeta_{\Ni}^{t-1})
		\end{array}
	\end{equation}
	To show that $ \wt{\bm \Psi}_i $ is feasible in Problem~\eqref{DecentralizedFinal_GeneralY}, we first construct the state of agent $i$ which evolves according to $\bm x_{i} = f_{i}\big(Y_{\Ni} \bm s_{\Ni} + z_{\Ni},\wt{\bm \Psi}_{i}(\bs{\xi}_i, \bm \zeta_{\Ni}), \bm \xi_{i}\big)$. Starting with $\widehat \chi_{i,1} = x_{i,1}$, we have that
	\begin{equation}\label{eq::c1_p2}
		\begin{array}{r@{}l}
			x_{i,t}&=\displaystyle A_{i,1}^t x_{i,1} + \sum_{\tau=1}^{t-1} \Big(A_{i,\tau+1}^t B_{i,\tau} (Y_{\Ni,\tau} \bm s_{\Ni,\tau} + z_{\Ni,\tau}) + A_{i,\tau+1}^t D_{i,\tau} \wt{\Psi}_{i,t}(\bm \xi_i^{\tau-1},\bm \zeta_{\Ni}^{\tau}) + A_{i,\tau+1}^t E_{i,\tau} \xi_{i,\tau} \Big) \\ 
			&=\displaystyle A_{i,1}^t x_{i,1} + \sum_{\tau=1}^{t-1} \Big(A_{i,\tau+1}^t B_{i,\tau} [R_{j,\tau}(\bm s_{j,\tau})]_{j\in \mc N_i} + A_{i,\tau+1}^t D_{i,\tau} \wt{\Psi}_{i,t}(\bm \xi_i^{\tau-1},[R^{\tau}_j(\bm s_j^{\tau})]_{j\in \mc N_i}) + A_{i,\tau+1}^t E_{i,\tau} \xi_{i,\tau} \Big) \\
			& = \wt{\chi}_{i,t}\left(\bm \xi_i^{t-1},[R^{t-1}_j(\bm s_j^{t-1})]_{j\in \mc N_i}\right)\\
			& =: \wh{\chi}_{i,t}\left(\bm \xi_i^{t-1},\bm s_\Ni^{t-1}\right).
		\end{array}
	\end{equation}
	where the implications follow due to the mapping \eqref{app::map1}.
	For each $i\in \mc M$, we consider the decision $ \wh{\bm \Gamma}_{i}(\bm \xi_i, \bm s_{\Ni}) $ defined through
	\begin{equation}\label{eq::c2_p2}
		\wh{\Gamma}_{i,t}\left(\bm \xi_i^{t-1},\bm s_\Ni^{t}\right)= \wt{\Psi}_{i,t}\left(\bm \xi_i^{t-1},[R^{t}_j(\bm s_j^{t})]_{j\in \mc N_i}\right).
	\end{equation}
	Notice that \eqref{eq::c2_p2} defines a valid policy construction due to the mapping \eqref{app::map1}. It remains to show that $\wh{\bm \Gamma}_{i}$ is feasible also for the constraints of Problem~\eqref{DecentralizedFinal_GeneralY}. We do so using deduction, as follows:  
	\begin{equation}
		\begin{array}{rll}
			& \big(\wt{\bs\chi}_{i}(\bm \xi_{i},\bm \zeta_{\Ni}) ,\wt{\bm \Psi}_{i}(\bs{\xi}_i,\bm \zeta_{\Ni})\big) \in \mc O_i,& \forall \bm \xi_i \in {\Xi}_i, \forall \bm \zeta_\Ni \in \wh{\mc X}_\Ni,\\
			\implies & \big(\wt{\bs\chi}_{i}(\bm \xi_{i},[R_j(\bm s_j)]_{j \in \mc N_i}), \wt{\bm \Psi}_{i}(\bs{\xi}_i,[R_j(\bm s_j)]_{j \in \mc N_i})\big) \in \mc O_i,& \forall \bm \xi_i \in {\Xi}_i, \forall \bm s_\Ni \in {\mc S}_\Ni,\\
			\implies &\big(\wh{\bm \chi}_{i}(\bm \xi_i,\bm s_\Ni), \wh{\bm \Gamma}_{i}(\bm \xi_i, \bm s_{\Ni})\big) \in \mc O_i, & \forall \bm \xi_i \in {\Xi}_i, \forall \bm s_\Ni \in {\mc S}_\Ni,
		\end{array}
	\end{equation}
	where the implications directly follow from \eqref{eq::c1_p2} and \eqref{eq::c2_p2}, and Lemma \ref{lem::1}. Same reasoning applies to all constraints in the problem formulation. This feasible solution attains the same value, $ \ell $, for the objective functions of Problem~\eqref{DecentralizedXab_GeneralY} and Problem~\eqref{DecentralizedFinal_GeneralY}, that is:
	\begin{equation}
		\begin{array}{rl}
			\ell & = \sum\limits_{i = 1}^M \max\limits_{\bm \xi_i \in {\Xi}_i, \bm \zeta_\Ni \in \wh{\mc X}_\Ni} J_i\big(\wt{\bs\chi}_{i}(\bm \xi_{i},\bm \zeta_{\Ni}) ,\wt{\bm \Psi}_{i}(\bs{\xi}_i,\bm \zeta_{\Ni})\big) \\& = \left \lbrace \begin{array}{l}
				J_i\big(\wt{\bs\chi}_{i}(\bm \xi_{i},\bm \zeta_{\Ni}) ,\wt{\bm \Psi}_{i}(\bs{\xi}_i,\bm \zeta_{\Ni})\big) \leq\ell_i,~~ \forall \bm \xi_i \in {\Xi}_i, \forall \bm \zeta_\Ni \in \wh{\mc X}_\Ni,\\
				\sum_{i = 1}^{M} \ell_i = \ell,
			\end{array} \right \rbrace \\
			& = \left \lbrace \begin{array}{l}
				J_i\big(\wt{\bs\chi}_{i}(\bm \xi_{i},[R_j(\bm s_j)]_{j \in \mc N_i}), \wt{\bm \Psi}_{i}(\bs{\xi}_i,[R_j(\bm s_j)]_{j \in \mc N_i})\big) \leq\ell_i,~~ \forall \bm \xi_i \in {\Xi}_i, \forall \bm s_\Ni \in {\mc S}_\Ni,\\
				\sum_{i = 1}^{M} \ell_i = \ell,
			\end{array} \right \rbrace \\
			& = \sum\limits_{i = 1}^M \max\limits_{\bm \xi_i \in {\Xi}_i, \bm s_\Ni \in {\mc S}_\Ni} J_i\big(\wh{\bm \chi}_{i}(\bm \xi_i,\bm s_\Ni), \wh{\bm \Gamma}_{i}(\bm \xi_i, \bm s_{\Ni})\big) = \ell.
		\end{array}
	\end{equation}
	The implications directly follow from \eqref{eq::c1_p2} and \eqref{eq::c2_p2}, and Lemma \ref{lem::1}.
\end{proof}

\begin{proof}[\large \bf Proof of Corollary~\ref{cor::2}]
	To demonstrate the result, it is sufficient to show that approximation \eqref{SetApproximation}   has sufficient degrees of freedom to represent the optimal state forecast set $\mc X_i$. 
	
	We first need to determine the complexity of $\mc X_i$ in Problem~\eqref{Decentralizedb}. For fixed $\mc X_i$ and an appropriate linearization of the piecewise objective function $J_i(\bm x_i,\bm u_i)$ using epigraph variables, Problem~\eqref{Decentralizedb} falls into the class of linear multistage robust optimization problem with right-hand-side uncertainty. This implies that we can replace the for all $\bm\xi_i\in\Xi_i$ and $\bm \zeta_{\Ni} \in \mc X_{\Ni}$, with for all \emph{extreme points} $\bm\xi_i\in\text{ext}(\Xi_i)$ and $\bm \zeta_{\Ni} \in\text{ext}(\mc X_{\mc N_i})$, without affecting the optimal value of the problem, see \cite{Georghiou2019b}. Since the choice of $\mc X_i$ at the beginning of the argument was arbitrary, we can conclude that we can always make this replacement  without loss of generality. 
	Moreover, due to the arborescence network structure and the linearity of the dynamics, the state $\bm x_{\text{root}}$ of the root node can take at most $|\text{ext}(\Xi_{\text{root}})|$ unique values. Hence, due to the convexity of the objective function, the smallest state forecast set $\mc X_\text{root}$  can be described with a convex combination of at most $|\text{ext}(\Xi_{\text{root}})|$ points. Using a recursive construction, the states $\bm x_i$ all agents in the tree can take at most   $\prod_{j\in \oNi} |\text{ext}(\Xi_j)|$  unique values, hence  the state forecast set $\mc X_i$  can be described with a convex combination of at most $\prod_{j\in \oNi} |\text{ext}(\Xi_j)|$ points.

	
	The above arguments shows that the maximum degrees of freedom needed to describe $\mc X_i$ is $\prod_{j\in \oNi} |\text{ext}(\Xi_j)|$. Hence, by setting $\mathcal{S}_i$ to be a simplex of dimension $\prod_{j\in \oNi} |\text{ext}(\Xi_i)|$, then for each $i\in\mc M$ the affine mapping in \eqref{SetApproximation} can project $\mathcal{S}_i$ to a set of at most $\prod_{j\in \oNi} |\text{ext}(\Xi_i)|$. Hence the result follows.
\end{proof}

\section{Supply chain with quantity flexibility contracts}\label{supplychain}

In this section, we evaluate the performance of the proposed method in a contract design mechanism for supply chains with decentralized operations. The proposed contract design is based on the structure of quantity flexibility (QF) contracts described in \citep{Tsay1999}. Decentralized supply chains are the norm in modern businesses since agents around the world cooperate to deliver multiple products. Due to this decentralized structure, each manufacturer (supplier) knows only what its immediate retailer (manufacturer) has requested, and is only concerned with its own performance cost. This, however, leads to ``mutual deception'' situations in which, for instance, some buyers inflate demand only to later disavow any undesired product \citep{Lee1997}, which increases uncertainty and operational costs in decentralized supply change networks \citep{Magee1967,Lovejoy1998}. 


To address this problem, QF contracts are used in the industry to coordinate the flow of materials and information in distributed supply chains over a fixed period of time.
In this setting, for a given product $p \in \mathcal P$ the QF contract between the pair manufacturer-retailer is parametrized by lower and upper bounds $\bm{\underline b}^p = [\underline b^p_1,\ldots,\underline b^p_T]$ and $\bm{\overline b}^p = [\overline b^p_1,\ldots,\overline b^p_T]$, respectively. Every period $t \in \mathcal T$, the retailer has the right to request delivery of any quantity of product $p \in \mc P$ within the agreed bounds $[\underline b_t^p,\overline b_t^p]$, and the manufacturer has the obligation to deliver it. 
QF contracts exist between suppliers and manufacturers as well.
In this way, the contract provides some flexibility for the retailer, helping to mitigate the uncertainties of future demand, as well as provide strong indications to the manufacturer about how to schedule the production line.
In practice, as time passes and the actual demand faced by the retailer is revealed, the two parties are allowed to revise their contracts within pre-agreed percentage changes of the lower and upper bounds. Figure~\ref{fig::QFC_v2} shows graphically how a serial supply chain with a supplier, a manufacturer and a retailer operates using QF contracts to move a single product.

\begin{figure*}[!t]
	\centering	
	\includegraphics[width = 0.8\textwidth]{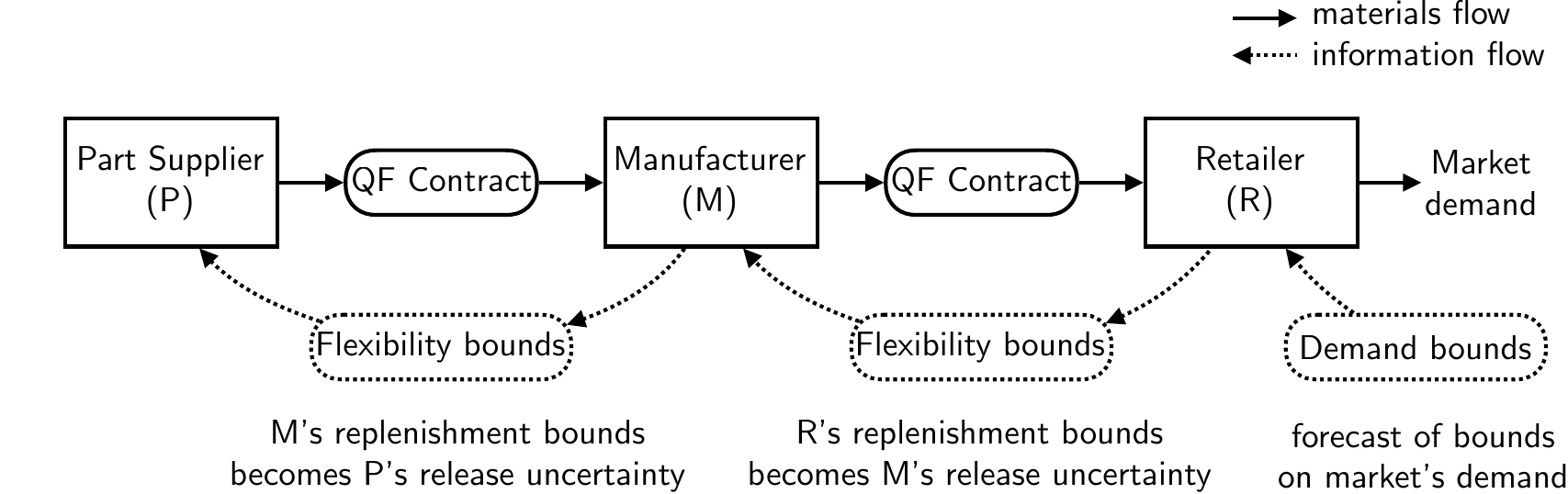}
	\caption{Supply chain design with quantitative flexibility contracts \citep{Tsay1999}}
	\label{fig::QFC_v2}
\end{figure*}

The design of QF contracts fits perfectly within the proposed framework where the forecast sets are in fact the upper and lower bounds that define the QF contracts. 
The seminal work of \citep{Lee1997} first proposes the use of optimization techniques for designing QF contracts in a so-called open-loop feedback control system, in which uncertain exogenous values are assumed to be known. Our proposed framework, however, truthfully models the uncertainty and incorporates it within the optimization framework. In the following, we discuss the design of such QF contracts. 

\subsection{Problem Formulation}
First consider a  supply chain with $M = 3$ agents as depicted in Figure~\ref{fig::QFC_v2}. In this simple example, the supplier is $i = 1$, the manufacturer is $i = 2$ and the retailer is $i = 3$, with $\mc N_1 = \{ 2 \}$,  $\mc N_2 = \{ 3 \}$ and $\mc N_3 = \emptyset$. We next partition the index set $M$ into three disjoint sets $\mc M_s = \{ 1 \}$, $\mc M_m = \{ 2 \}$, and $M_r = \{ 3 \}$ to represent the indices associated with the supplier, the manufacturer, and the supplier, respectively. 
The inventory dynamics for product $p\in\mathcal{P}=\{1,\ldots,P\}$ and for every $t \in \mc T$ and $i \in \mc M$ are expressed through
\begin{equation}
	\label{eq:dynamic}
	\begin{cases}
		I^p_{1,t+1} = I^p_{1,t} + R^p_{1,t} - U_{2,t}^p & \qquad \text{(supplier)}, \\
		I^p_{2,t+1} = I^p_{2,t} + R^p_{2,t} - U_{3,t}^p & \qquad \text{(manufacturer)}, \\
		I^p_{3,t+1} = I^p_{3,t} + R^p_{3,t} - D_t^p & \qquad \text{(retailer)}, \\
	\end{cases}
\end{equation}
where $ I^p_{i,t}$ denotes the inventory stock of product $p \in \mc P$ held by agent $i \in \mc M$ at time $t$. Furthermore, $R^p_{i,t}$ is the replenishment decision defined as
\begin{equation*}
	R^p_{i,t} =  \sum \limits_{p=1}^{P} B^p_i U^p_{i,t} + \xi^p_{i,\text{production},t},
\end{equation*}
where $ B_i^p $ is the blending coefficients, $U^p_{i,t} \in [\underline b^p_{i,t}, \overline b^p_{i,t}]$ denotes the quantity of product $p$ that agent $i$ will request from its neighbor at time~$t$ with $\underline b^p_{i,t}$ and $\overline b^p_{i,t}$ being the lower bound and upper bound of QF contracts, and
$\xi_{i,\text{production},t}^p$ is an uncertain vector capturing materials loss.
Finally, $D_t^p$ denotes the product demand for item $p$ at time $t$ originating from the market, and it is assumed to be periodic and governed by a factor model with $K$ factors of the form
\begin{equation} \label{eq:demand}
	D^p_t = \left \lbrace\begin{array}{ll}
		2 + \sin\left(2\pi \dfrac{t}{T-1}\right) + \dfrac{1}{K} \sum \limits_{k=1}^{K} F^p_k \xi_{k,t,\text{demand}} & \text{for } p \text{ even}\\[2ex]
		2 + \cos\left(2\pi \dfrac{t}{T-1}\right) + \dfrac{1}{K} \sum \limits_{k=1}^{K} F^p_k \xi_{k,t,\text{demand}} & \text{for } p \text{ odd},
	\end{array}\right.
\end{equation}
where $ F_{p}^k $ captures correlations amongst the products and
$ \xi_{k,\text{demand},t}$ captures uncertainty in the factor $k$, similar to \citep{Georghiou2019b}. 

Since the market demand and material loss are uncertain, we seek to design QF contracts that minimize the worst-case sum of backlog and inventory holding costs across all agents. Moreover, we assume that agents do not wish to disclose their actual demands from their neighbors due to privacy concerns. However, they are willing to share lower and upper limits of what they need from each other. In this case, the inventory dynamic~\eqref{eq:dynamic} translates to the following compact representation
\begin{align*}
	I^p_{i,t+1} = I^p_{i,t} + R^p_{i,t} - D_{i,t}^p,
\end{align*}
where $D_{i,t}^p = \zeta_{\mc N_i, t}^p$ for every $i \in \mc M_s \bigcup \mc M_m$ with  $\zeta_{i,t}^p \in \mathcal{U}_{i,t}^p = [\underline b^p_{i,t}, \overline b^p_{i,t}]$ being the introduced auxiliary uncertainty to represent the unknown demand from the neighbor and
$D_{i,t}^p = D_t^p$ for $i \in M_r$.
The objective of agent $i \in \mc M$ is to determine an ordering policy $\bm U_{i,t}$ for every $t \in \mc T$ based on his inventory level up to time $t$, namely $[I_{i,1}, \dots, I_{i,t}]$, and the uncertain demand from his neighbor up to time $t$, namely $[\zeta_{i+1,1}, \dots, \zeta_{i+1,t}]$, if additionally $i \in \mc M_s \bigcup \mc M_m$.
The overall optimization problem for designing the QF contracts is  an instance of Problem~\eqref{Decentralized} and is formulated as

\begin{equation}\label{QFSupplyChain}
	\begin{array}{cll}
		\text{minimize} & \displaystyle \sum_{i = 1}^M \max\limits_{\bm \xi_i \in \Xi_i, \bm \zeta_{\Ni} \in \mc U_{\Ni}} \sum_{t =1}^{T+1} \sum_{p=1}^P c_H \big[ I^p_{i,t}\big]_+ + c_B \big[-I^p_{i,t}\big]_+ \\
		\text{subject to} & 
		\bm U_i = \bm {\psi}_{i}(\bm I_i, \bm \zeta_{\Ni}) & \hspace{-5em} \forall \bm \xi_i \in \Xi_i, \forall \bm \zeta_{\mc N_i} \in \mc U_{\mc N_i}, \forall i \in \mc M \\
		& \underline{\bm{b}}_i = [\underline{b}_i^1, \dots, \underline{b}_i^P] \in \mathbb{R}^{T \times P} & \hspace{-5em} \forall i \in \mc M_m \bigcup \mc M_r \\
		& \overline{\bm{b}}_i = [\overline{b}_i^1, \dots, \overline{b}_i^P] \in \mathbb{R}^{T \times P} & \hspace{-5em} \forall i \in \mc M_m \bigcup \mc M_r \\
		& \bm U_{i} \in \mc U_{i} = [\underline{b}_i^1,\,\overline{b}_i^1] \times \cdots \times [\underline{b}_i^P,\,\overline{b}_i^P] & \hspace{-5em} \forall \bm \xi_i \in \Xi_i, \forall \bm \zeta_{\mc N_i} \in \mc U_{\mc N_i}, \forall i \in \mc M_m \bigcup \mc M_r \\
		& \displaystyle R^p_{i,t} = \sum \limits_{p=1}^{P} B^p_i U^p_{i,t} + \xi^p_{i,\text{production},t} & \hspace{-5em} \forall t \in \mc T, \forall p \in \mc P, \forall \bm \xi_i \in \Xi_i, \forall \bm \zeta_{\mc N_i} \in \mc U_{\mc N_i}, \forall i \in \mc M \\
		& \displaystyle I^p_{i,t+1} = I^p_{i,t} + R^p_{i,t} - D^p_{i,t} & \hspace{-5em} \forall t \in \mc T, \forall p \in \mc P, \forall \bm \xi_i \in \Xi_i, \forall \bm \zeta_{\mc N_i} \in \mc U_{\mc N_i}, \forall i \in \mc M \\
	\end{array}
\end{equation}
where $[\cdot]_+ = \max \{0, \cdot \}$ and the coefficients $ c_B, c_H $ denote the backlogging and inventory holding costs, respectively.
By construction of Problem~\eqref{QFSupplyChain}, for each agent $i$ and product $p$ we have  $\mathcal{U}_{i}^p = \mathcal{U}_{i,1}^p\times \cdots \times \mathcal{U}_{i,T}^p$ and  $\mathcal{U}_{i} = \mathcal{U}_{i}^1 \times \cdots \times \mathcal{U}_{i}^P$. Thus  $\mathcal{U}_{i}$  is a  hyper-rectangles which is controlled coordinate wise. Hence, it can be exactly represented by the primitive sets $\mathcal S_{i,t}^p = [-1,\,1] $ and 
\begin{equation*}
	\mc U_{i,t}^p(y_{i,t}^p,z_{i,t}^p) = \displaystyle\left\{U_{i,t}^p \in\mathbb{R} \,:\, \exists s_{i,t}^p  \in \mc S_{i,t}^p \text{ s.t. } U_{i,t}^p =    y_{i,t}^p s_{i,t}^p + z_{i,t}^p \right\},
\end{equation*}	
as discussed in Remark~\ref{remark1}. Applying Theorem~\ref{thm::5}, Problem~\eqref{QFSupplyChain} can thus be reformulated as an instance of Problem~\eqref{DecentralizedFinal_GeneralY} where sets $\mc S_i = \prod_{p=1}^P [-1, \, 1]^T$.
\begin{align} \label{eq:QF:final}
	\begin{array}{cll}
		\text{minimize} & \displaystyle \sum_{i = 1}^M \max\limits_{\bm \xi_i \in \Xi_i, \bm s_{\Ni} \in \mc S_{\Ni}} \sum_{t =1}^{T+1} \sum_{p=1}^P c_H \big[ I^p_{i,t} \big]_+ + c_B \big[ -I^p_{i,t} \big]_+ \\
		\text{subject to} &
		\bm U_i = \bm {\Gamma}_{i}(\bm \xi_i, \bm s_{\Ni}) & \hspace{-5em} \forall \bm \xi_i \in \Xi_i, \forall \bm s_{\mc N_i} \in \mc S_{\mc N_i}, \forall i \in \mc M \\
		& \underline{\bm{b}}_i,\,\overline{\bm{b}}_i,\, \bm z_i \in \mathbb{R}^{T \times P},\,\bm y_i \in \mathbb{R}_+^{T \times P} & \hspace{-5em} \forall i \in \mc M_m \bigcup \mc M_r \\
		& \underline{\bm{b}}_i = \bm z_i - \bm y_i, \, \overline{\bm{b}}_i = \bm z_i + \bm y_i & \hspace{-5em} \forall i \in \mc M_m \bigcup \mc M_r \\
		& \bm U_{i} \in \mc U_{i} = [\underline b_i^1,\,\overline b_i^1] \times \cdots \times [\underline b_i^P,\,\overline b_i^P] & \hspace{-5em} \forall \bm \xi_i \in \Xi_i, \forall \bm s_{\mc N_i} \in \mc S_{\mc N_i}, \forall i \in \mc M_m \bigcup \mc M_r \\
		& \zeta_{i,t}^p = y_{i,t}^p s_{i,t}^p + z_{i,t}^p & \hspace{-5em} \forall t \in \mc T, \forall p \in \mc P, \forall \bm s_{i} \in \mc S_{i}, \forall i \in \mc M_m \bigcup \mc M_r \hspace{-8em} \\
		& \displaystyle R^p_{i,t} = \sum \limits_{p=1}^{P} B^p_i U^p_{i,t} + \xi^p_{i,\text{production},t} & \hspace{-5em} \forall t \in \mc T, \forall p \in \mc P, \forall \bm \xi_i \in \Xi_i, \forall \bm s_{\mc N_i} \in \mc S_{\mc N_i}, \forall i \in \mc M \hspace{-8em} \\
		& \displaystyle I^p_{i,t+1} = I^p_{i,t} + R^p_{i,t} - D^p_{i,t} & \hspace{-5em} \forall t \in \mc T, \forall p \in \mc P, \forall \bm \xi_i \in \Xi_i, \forall \bm s_{\mc N_i} \in \mc S_{\mc N_i}, \forall i \in \mc M \hspace{-8em}
	\end{array}
\end{align}

The corresponding centralized supply chain problem that \emph{does not involve the quantity flexibility contracts} can be written as an instance of Problem~\eqref{Centralized} in which all agents have access to the inventory levels of all other agents. This results in the following optimization problem
\begin{align} \label{eq:centralized:supply:chain}
	\begin{array}{cll}
		\text{minimize} & \displaystyle \sum_{i = 1}^M \max\limits_{\bm \xi_{\mc M} \in \Xi_{\mc M}} \sum_{t =1}^{T+1} \sum_{p=1}^P c_H \big[ I^p_{i,t} \big]_+ + c_B \big[ -I^p_{i,t} \big]_+ \\
		\text{subject to} & 
		\bm U_i = \bm {\pi}_{i}(\bm I_{\mc M}) & \hspace{-5em} \forall \bm \xi_{\mc M} \in \Xi_{\mc M}, \forall i \in \mc M \\
		& \displaystyle R^p_{i,t} = \sum \limits_{p=1}^{P} B^p_i U^p_{i,t} + \xi^p_{i,\text{production},t} & \hspace{-5em} \forall t \in \mc T, \forall p \in \mc P, \forall \bm \xi_{\mc M} \in \Xi_{\mc M}, \forall i \in \mc M \\
		& \displaystyle I^p_{i,t+1} = I^p_{i,t} + R^p_{i,t} - D^p_{i,t} & \hspace{-5em} \forall t \in \mc T, \forall p \in \mc P, \forall \bm \xi_{\mc M} \in \Xi_{\mc M}, \forall i \in \mc M, \hspace{-5em}
	\end{array}
\end{align}
where, with slight abuse of nation, $D^p_{i,t} = U^p_{\Ni, t}$ for every $i \in \mc M_s \bigcup \mc M_m$ and $D^p_{i,t} = D^p_{t}$ for $i \in \mc M_r$. Problem~\eqref{eq:centralized:supply:chain} is in turn can be reformulated as an instance of Problem~\eqref{Centralizedb}. 

In the following numerical experiments, we assume that the initial inventory levels are zero, the uncertain demand~\eqref{eq:demand} are produced by $K = 4$ factors. We generate random instance of Problem~\eqref{eq:QF:final} by uniformly generating the coefficients $ F_{p}^k$ from the interval $[-1, 1]$, the backlogging and holding coefficients, $c_B$ and $c_H$ respectively, are randomly generated from $[0, 1]$, and the blending coefficients $B_i^p$ are uniformly generated from $[0.5, 1]^P$ for all $i \in \mc M$ and $p \in \mc P$. We also assume that the production and demand uncertainties 
and the optimization belong to $\xi_{i,\text{production},t}^p \in [-0.1,0]$ and $ \xi_{k,\text{demand},t} \in [-\theta,\,\theta]$ with $\theta$ denoting the level of uncertainty, respectively. In this way, the uncertainty is characterized by the set $\Xi_i = \prod_{t=1}^T  [-0.1,0]^P $ for agents $i \in \mc M_s \bigcup \mc M_m$ and by the set $\Xi_i = \prod_{t=1}^T  [-0.1,\, 0]^P \times [-\theta, \,\theta]^K$ for agents $i \in \mc M_r$.
Finally, we approximate Problems~\eqref{eq:QF:final} and~\eqref{eq:centralized:supply:chain} using affine policies, while the maximum operator $ [\cdot]_+ $ is linearized using epigraphical variables similar to \cite[Section 5.1]{Bertsimas2015}.

\subsection{Decentralized Optimization via ADMM}
We demonstrate how an ADMM algorithm can be applied to solve the  Problem~\eqref{eq:QF:final}, which promotes decentralized computation and provides significant privacy to all agents. For illustration, we consider the system depicted in Figure~\ref{fig::QFC_v2} with $M=3$ agents, $T=20$ horizon length, $P=1$ product, and the market demand parameter $\theta = 1$. Notice that the decision variables $\underline{\bm b}_i, \overline{\bm b}_i$ in the optimization problem \eqref{eq:QF:final} are auxiliary. Removing these auxiliary variables yields the following reformulation
\begin{align*}
	\min_{\bm y_2, \bm y_3\in \mathbb R_+^T\,, \bm z_2, \bm z_3 \in \mathbb R^T} ~ J_1(\bm y_2, \bm z_2) + J_2(\bm y_2, \bm z_2, \bm y_3, \bm z_3) + J_3(\bm y_3, \bm z_3),
\end{align*}
where the implicit functions $J_i$ are defined as 
\begin{align*}
	J_1(\bm y_2, \bm z_2) &= 
	\left\{
	\begin{array}{cll}
		\text{minimize} & \displaystyle \max\limits_{\bm \xi_1 \in \Xi_1, \bm s_{2} \in \mc S_{2}} \sum_{t =1}^{T+1} \sum_{p=1}^P c_H \big[ I^p_{1,t} \big]_+ + c_B \big[ -I^p_{1,t} \big]_+ \\
		\text{subject to} & 
		\bm U_1 = \bm {\Gamma}_{1}(\bm \xi_1, \bm s_{2})
		& \hspace{-3.5em} \forall \bm \xi_1 \in \Xi_1, \forall \bm s_{2} \in \mc S_{2} \\
		& \zeta_{2,t}^p = y_{2,t}^p\, s_{2,t}^p + z_{2,t}^p & \hspace{-3.5em} \forall t \in \mc T, \forall p \in \mc P, \forall \bm s_{2} \in \mc S_{2} \\
		& \displaystyle R^p_{1,t} = \sum \limits_{p=1}^{P} B^p_1 U^p_{1,t} + \xi^p_{1,\text{production},t} & \hspace{-3.5em} \forall t \in \mc T, \forall p \in \mc P, \forall \bm \xi_1 \in \Xi_1, \forall \bm s_{2} \in \mc S_{2} \\
		& \displaystyle I^p_{1,t+1} = I^p_{1,t} + R^p_{1,t} - \zeta_{2,t}^p & \hspace{-3.5em} \forall t \in \mc T, \forall p \in \mc P, \forall \bm \xi_1 \in \Xi_1, \forall \bm s_{2} \in \mc S_{2},
	\end{array}
	\right. \\
	J_2(\bm y_2, \bm z_2, \bm y_3, \bm z_3) &= 
	\left\{
	\begin{array}{cll}
		\text{minimize} & \displaystyle \max\limits_{\bm \xi_2 \in \Xi_2, \bm s_{3} \in \mc S_{3}} \sum_{t =1}^{T+1} \sum_{p=1}^P c_H \big[ I^p_{2,t} \big]_+ + c_B \big[ -I^p_{2,t} \big]_+ \\
		\text{subject to} & 
		\bm U_2 = \bm {\Gamma}_{2}(\bm \xi_2, \bm s_{3})
		& \hspace{-3.5em} \forall \bm \xi_2 \in \Xi_2, \forall \bm s_{3} \in \mc S_{3} \\
		& \zeta_{3,t}^p = y_{3,t}^p \, s_{3,t}^p + z_{3,t}^p & \hspace{-3.5em} \forall t \in \mc T, \forall p \in \mc P, \forall \bm s_{3} \in \mc S_{3} \\
		& U_{2,t}^p \in \left[ z_{2,t}^p - y_{2,t}^p,\, z_{2,t}^p + y_{2,t}^p \right] & \hspace{-3.5em} \forall t \in \mc T, \forall p \in \mc P, \forall \bm \xi_2 \in \Xi_2, \forall \bm s_{3} \in \mc S_{3} \\
		& \displaystyle R^p_{2,t} = \sum \limits_{p=1}^{P} B^p_2 U^p_{2,t} + \xi^p_{2,\text{production},t} & \hspace{-3.5em} \forall t \in \mc T, \forall p \in \mc P, \forall \bm \xi_2 \in \Xi_2, \forall \bm s_{3} \in \mc S_{3} \\
		& \displaystyle I^p_{2,t+1} = I^p_{2,t} + R^p_{2,t} - \zeta^p_{3,t} & \hspace{-3.5em} \forall t \in \mc T, \forall p \in \mc P, \forall \bm \xi_2 \in \Xi_2, \forall \bm s_{3} \in \mc S_{3},
	\end{array}
	\right. \\
	J_3(\bm y_2, \bm z_2) &= 
	\left\{
	\begin{array}{cll}
		\text{minimize} & \displaystyle \max\limits_{\bm \xi_3 \in \Xi_i} \sum_{t =1}^{T+1} \sum_{p=1}^P c_H \big[ I^p_{3,t} \big]_+ + c_B \big[ -I^p_{3,t} \big]_+ \\
		\text{subject to} &
		\bm U_3 = \bm {\Gamma}_{3}(\bm \xi_3)
		& \hspace{-1em} \forall \bm \xi_3 \in \Xi_3 \\
		& U_{3,t}^p \in \left[ z_{3,t}^p - y_{3,t}^p,\, z_{3,t}^p + y_{3,t}^p \right] & \hspace{-1em} \forall t \in \mc T, \forall p \in \mc P, \forall \bm \xi_3 \in \Xi_3 \\
		& \displaystyle R^p_{3,t} = \sum \limits_{p=1}^{P} B^p_3 U^p_{3,t} + \xi^p_{3,\text{production},t} & \hspace{-1em} \forall t \in \mc T, \forall p \in \mc P, \forall \bm \xi_3 \in \Xi_3 \\
		& \displaystyle I^p_{3,t+1} = I^p_{3,t} + R^p_{3,t} - D^p_{3,t} & \hspace{-1em} \forall t \in \mc T, \forall p \in \mc P, \forall \bm \xi_3 \in \Xi_3.
	\end{array}
	\right. 
\end{align*}
Define next the global decision variable $\bm \alpha = [\bm y_2^\top, \bm z_2^\top, \bm y_3^\top, \bm y_4^\top]^\top$. With slight abuse of notation, we can now reformulate the above optimization problem as the following optimization problem
\begin{align}
	\label{eq:ADMM:problem}
	\min\limits_{\bm \beta_1, \bm \beta_2, \bm \beta_3, \bm \alpha} \left\{ \sum_{i \in \mc M} J_i(\bm \beta_i): \bm \beta_i = \widetilde{\bm \alpha}_i ~ \forall i \in \mc M \right\},
\end{align}
where given a global decision variable $\bm \alpha = [\bm y_2^\top, \bm z_2^\top, \bm y_3^\top, \bm y_4^\top]^\top$, we split $\bm \alpha$ to the (overlapping) chunks $\widetilde{\bm \alpha}_1 = [\bm y_2^\top, \bm z_2^\top]^\top$, $\widetilde{\bm \alpha}_2 = [\bm y_2^\top, \bm z_2^\top, \bm y_3^\top, \bm y_4^\top]^\top$, and $\widetilde{\bm \alpha}_3 = [\bm y_3^\top, \bm z_3^\top]^\top$.
Figure~\ref{fig::ADMM}~(a) depicts the underlying graph structure of Problem~\eqref{eq:ADMM:problem}, whose augmented Lagrangian is of the form
\begin{align*}
	\mc L(\bm \beta_1, \bm \beta_2, \bm \beta_3, \bm \alpha, \bm \gamma_1, \bm \gamma_2, \bm \gamma_3) = \sum_{i \in \mc M} J_i(\bm \beta_i) + \bm \gamma_i^\top (\bm \beta_i - \widetilde{\bm \alpha}_i) + \frac{\rho}{2} \| \bm \beta_i - \widetilde{\bm \alpha}_i \|^2,
\end{align*}
with $\bm \gamma_i$ being the dual variable associated with the constraint $\bm \beta_i = \bm {\widetilde \alpha}_i$, and $\rho$ being a positive constant. Then, the ADMM update at iteration $k$ follows the form
\begin{align*}
	\bm \beta_i^{(k+1)} &\gets \argmin_{\bm \beta_i} \left\{ J_i(\bm \beta_i) + \bm \left( \gamma_i^{(k)} \right)^\top \bm \beta_i + \frac{\rho}{2} \left\| \bm \beta_i - \widetilde{\bm \alpha}_i^{(k)} \right\|^2 \right\} \\
	\bm \alpha^{(k+1)} &\gets \argmin_{\bm \alpha} \left\{ \sum_{i \in \mc M} - \left(\bm \gamma_i^{(k)} \right)^\top \widetilde{\bm \alpha}_i + \frac{\rho}{2} \left\| \bm \beta_i^{(k+1)} - \widetilde{\bm \alpha}_i \right\|^2 \right\} \\
	\bm \gamma_i^{(k+1)} &\gets \bm \gamma_i^{(k)} + \rho \left( \bm \beta_i^{(k+1)} - \widetilde{\bm \alpha}_i^{(k+1)} \right).
\end{align*}
Notice that the updates of decision variables $\bm \beta_i$ and $\bm \gamma_i$ can be carried out locally for every agent $i \in \mc M$, which implies that the structure of $J_i$ is only known to agent $i$. This salient feature promotes privacy amongst the agents. In addition, the update of the decision variable $\bm \alpha$ involves solving an unconstrained quadratic minimization problem that can be solved analytically; see \citep[\S~7.2]{Boyd2011}.
The analytic expression constitutes local averaging rather than global averaging, and therefore, it can be accomplished by local information exchange. This indicates that all ADMM updates can be performed locally up to the information exchange $\widetilde{\bm \alpha}_i$ between neighbors. Figure~\ref{fig::ADMM}~(b) reports the convergence behavior of the average performance of the ADMM algorithm for solving $10$ random instances of the Problem~\eqref{eq:QF:final} where the functions $J_i$ are approximated via affine decision rules. In the experiments we set set $\rho = 0.1$ and  decision variables $\widetilde{\bm \alpha}^{(0)}_i,\, \bm \gamma_i^{(0)},\bm \beta_i^{(0)}$ are initialized at zero for all $i\in\mathcal{M}$. We observe that the algorithm converges to an optimal solution and achieves the machine precision in $10$ iterations. 

\begin{figure*}
	\center
	\begin{minipage}{0.37\textwidth}
		\subfigure[]{\includegraphics[width = \textwidth]{ADMM.pdf}}
	\end{minipage}\hfil
	\begin{minipage}{0.4\textwidth}
		\subfigure[]{\includegraphics[width = \textwidth]{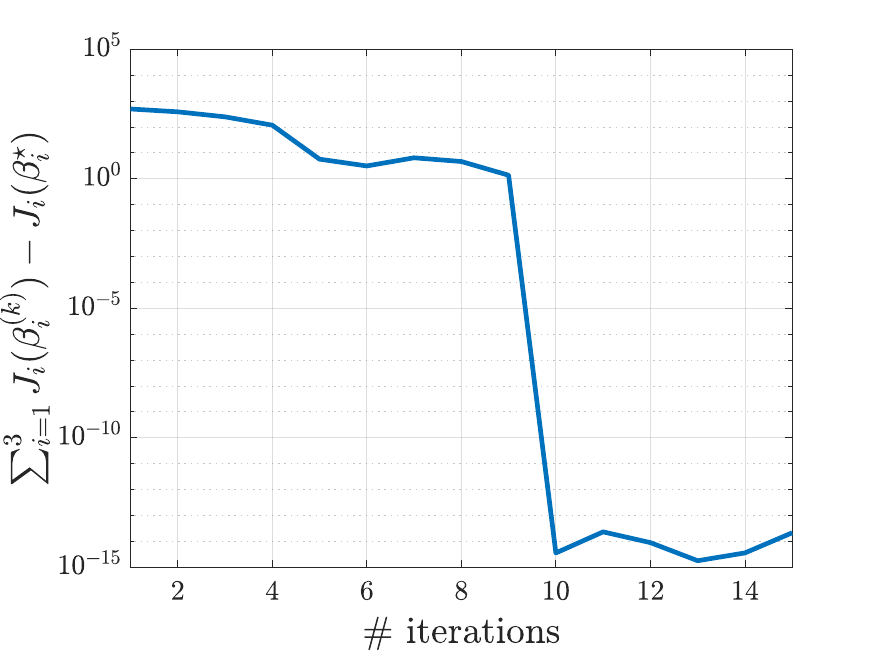}}
	\end{minipage}
	\caption{(a) Illustration of the graph structure with $3$ agents. Local objective functions are on the left; global variable components are on the right. The bipartite graph can be viewed as a consistency constraint that links local variables and global variables. (b)  Convergence behavior of the ADMM algorithm.}
	\label{fig::ADMM}
\end{figure*}

\begin{figure*}[t]
	\centering
	\subfigure[]{\includegraphics[width=0.425\textwidth]{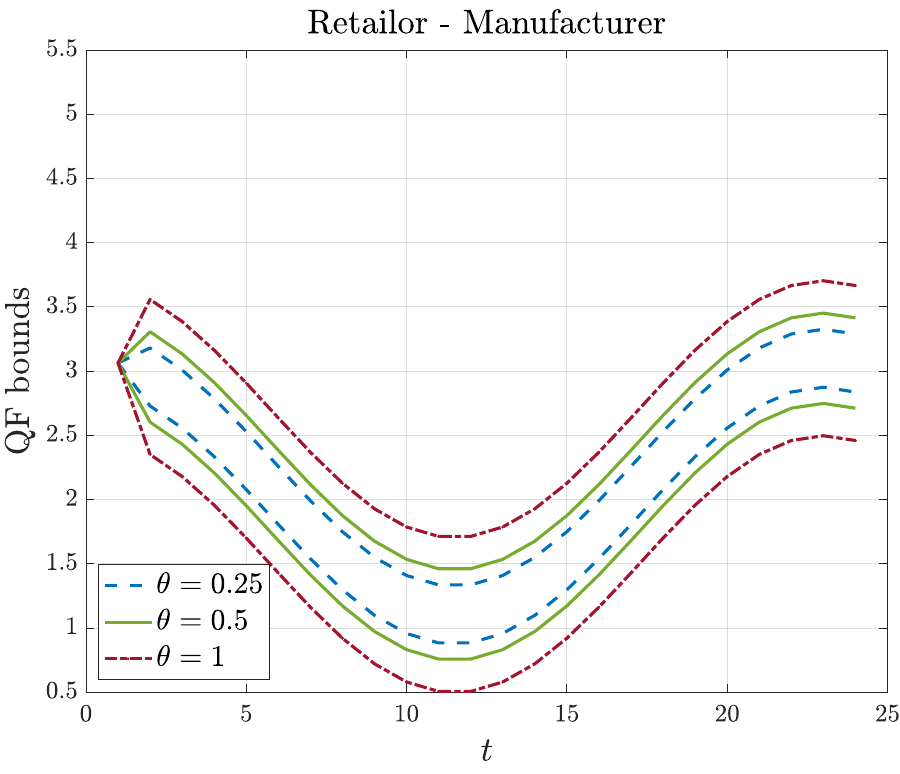}}\hfil
	\subfigure[]{\includegraphics[width=0.425\textwidth]{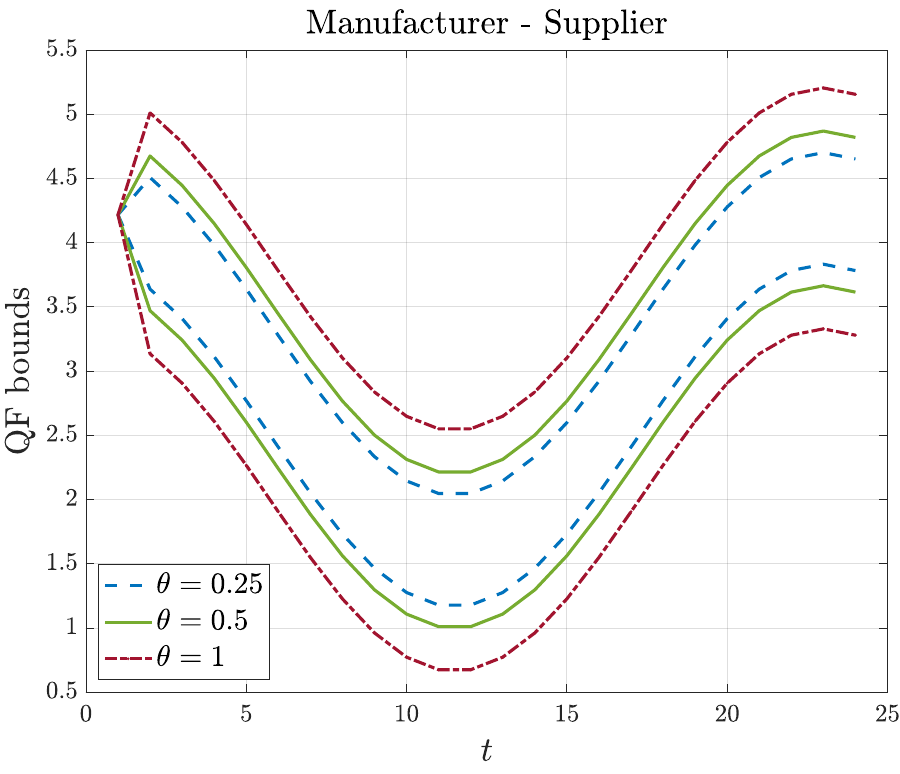}}
	\caption{Effect of uncertainty on (a) retailer-manufacturer and (b) manufacturer-supplier QF contracts over a $T = 24$ horizon length.}
	\label{fig::SC_UndEff}
\end{figure*}

\subsection{Numerical Results}
\label{sec::numerics:supply}
In the first experiment, we investigate how the degree of uncertainty in the market demand affects the QF bounds. We consider the system depicted in Figure~\ref{fig::QFC_v2} with $M=3$ agents and $P=1$ product. We solve the optimization problem~\eqref{eq:QF:final} with $\theta = \{ 0.25,0.5,1 \}$ for a horizon length of $T=24$. The QF contracts between retailer/manufacturer and manufacturer/supplier pairs are depicted in Figure~\ref{fig::SC_UndEff} when we set $F_p^k = (-1)^k / 2, c_B=c_H = 1$ and $B_1^p = B_2^p = B_3^p = 1$. We observe that the size of the QF contrasts increases as the uncertainty in the market demand increases. However, due to the adaptive nature of the recourse decisions, the size of the QF bounds does not substantially increase over time. 
Moreover, we observe that the QF bounds between manufacturers and suppliers are wider than those between manufacturers and retailers. This observation is consistent with the bullwhip effect in \citep{Lee1997}, a theory that describes how small fluctuations in demand at the retail level can cause progressively larger fluctuations at the manufacturer and supplier levels.

In the second experiment, we investigate the effects of horizon length and number of agents in the network with a single supplier, $N$ intermediate manufacturers, and a single retailer.
We compare our proposed local information exchange policy design to the centralized one over $10$ randomly generated instances. Throughout the experiment, we fix the number of products at $P=2$ and the market demand parameter constant at $\theta = 1$.
First, we compare the optimal values of the local problem~\eqref{eq:QF:final} and the centralized problem~\eqref{eq:centralized:supply:chain}. Denoting by $\text{obj}_L$ the objective value of~\eqref{eq:QF:final} and by $\text{obj}_C$ the objective of the centralized~\eqref{eq:centralized:supply:chain}, we define the (percentage) suboptimality as $100 \times ({\text{obj}_L - \text{obj}_C}) / {\text{obj}_C}$.
We evaluate the effect of the horizon length and the number of manufacturers on the quality of the solution in terms of the suboptimality metric. In addition, we examine the impact of demand-side delays on the solution of the local and centralized policy designs, which occur when agents report their demands to their preceding agents at the end of the time interval rather than submitting their requests at the beginning.
Specifically, we assume that the inventory dynamic is of the form
\begin{align*}
	I_{i,t+1}^p = I_{i,t+1}^p + R_{i,t}^p - D_{i,t-1}^p
\end{align*}
for every $i \in \mc M$, where the demand term $D_{i,t-1}^p$ is modified to incorporate the demand-side delay. The introduction of the delay yields policies of the form $U_{i,t} = \bm \Psi(\bm \xi_i^{t-1}, \bm s_{\mc N_i}^{t-1})$ and $U_{i,t} = \bm \Pi(\bm \xi_i^{t-1}, \bm \xi_{\mc M \backslash \{i\}}^{t-1})$ for every $t \in \mc T$ and $i \in \mc M$ in the decentralized and centralized settings, respectively, and in turn approximated by  affine decision rules.
We consider two different cases. In the first case, we consider $N=1$ intermediate manufacturer and increase the time horizon $T$ up to the horizon length $10$. In the second case, we fix the time horizon to $T=5$, while changing the number of manufacturers $N$ from 1 to 10. 
Figure~\ref{fig::SC_Comparison} summarizes the results. In the absence of a time delay in the system, we observe that the suboptimality is zero. Interestingly, the introduction of QF contracts between agents not only preserves their privacy but also does not impact  performance. 
In contrast, time delay has a significant impact on the quality of the solutions.
Specifically, we observe that an increase in horizon length can have an adverse effect on the quality of the local information problem. This is to be expected, as the uncertainty faced by each agent increases with increasing the horizon length in the local information problem. However, as the horizon length increases, the suboptimality becomes saturated, and the decentralized policy absorbs the effect of delay, as agents begin mitigating uncertainty through the use of local information exchanges. 
In addition, as the number of agents increases beyond $4$, there is essentially no difference in terms of suboptimality as the function of the number of manufacturers.

\begin{figure*}[t]
	\center
	\includegraphics[width = 0.45\textwidth]{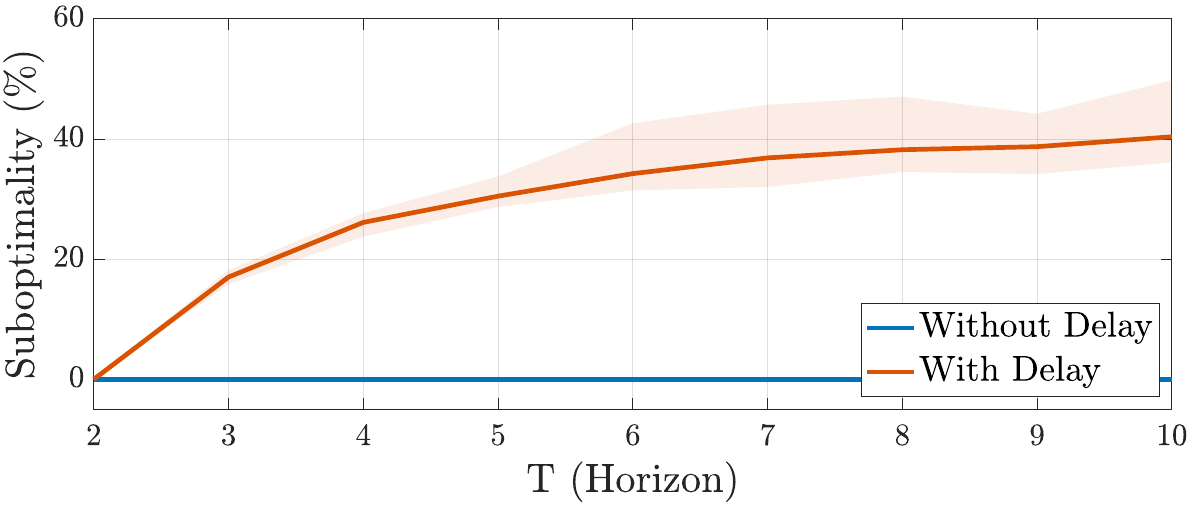}
	\includegraphics[width = 0.45\textwidth]{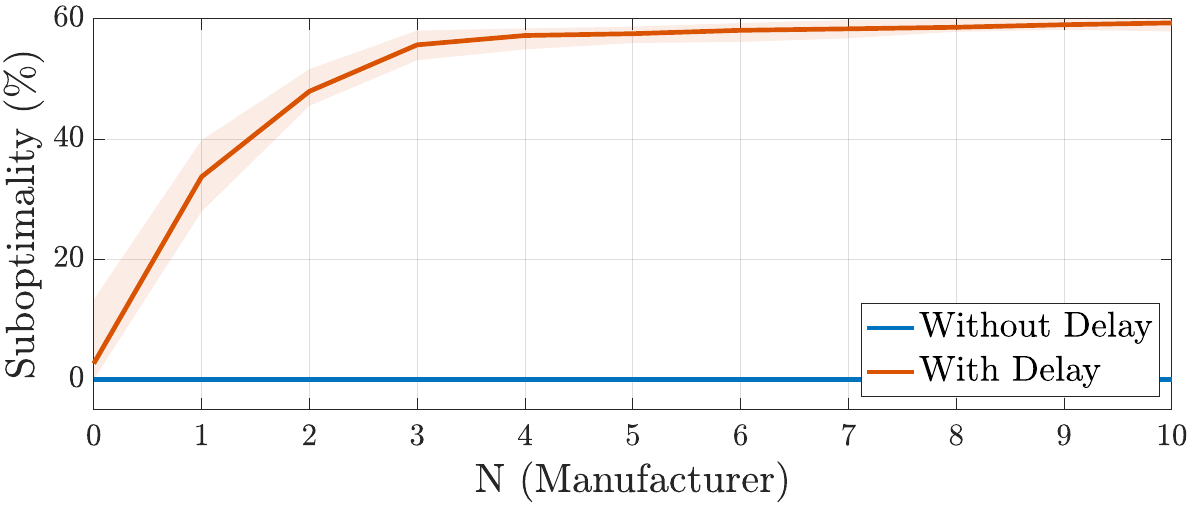} \\
	\caption{Suboptimality of local vs centralized problem as a function of the horizon length (left) and number of agents (right). Solid lines (shaded regions) represent averages (ranges) across $10$ independent simulations.}
	\label{fig::SC_Comparison}
\end{figure*}

Next, we use a rolling horizon scheme to compare the average performance of the centralized and local information problems. In this experiment, we fix all parameters as in the first experiment. We then generate a random realization of the uncertainty $\bm \xi\in\Xi$. Next, we solve the centralized and local information for the horizon length $T=10$ and update the inventory levels  according to the realization $\bm \xi_1$ and the first stage decisions $U_{i,1}^p$. We repeat the process until we reach the end of the horizon. Specifically, at any time $t =2,\ldots,T$, we resolve each  problem for the shorter horizon length $T-t$ and the initial inventory stocks $I^p_{i,t-1}$ for every $i \in \mathcal M$. We then update the inventory levels according to realization $\bm \xi_t$ and corresponding first stage decisions $U_{i,t}^p$. Figure~\ref{fig::SC_Comparison::2} summarizes our results for $10$ randomly generated realization of uncertainty. We observe that the average performances of the local and centralized information problems are significantly improved compared to their worst-case performance obtained by solving \eqref{eq:QF:final} and \eqref{eq:centralized:supply:chain}, respectively. The improvement is more significant when there is a delay in the system.

\begin{figure*}[t]
	\center
	\includegraphics[width = 0.45\textwidth]{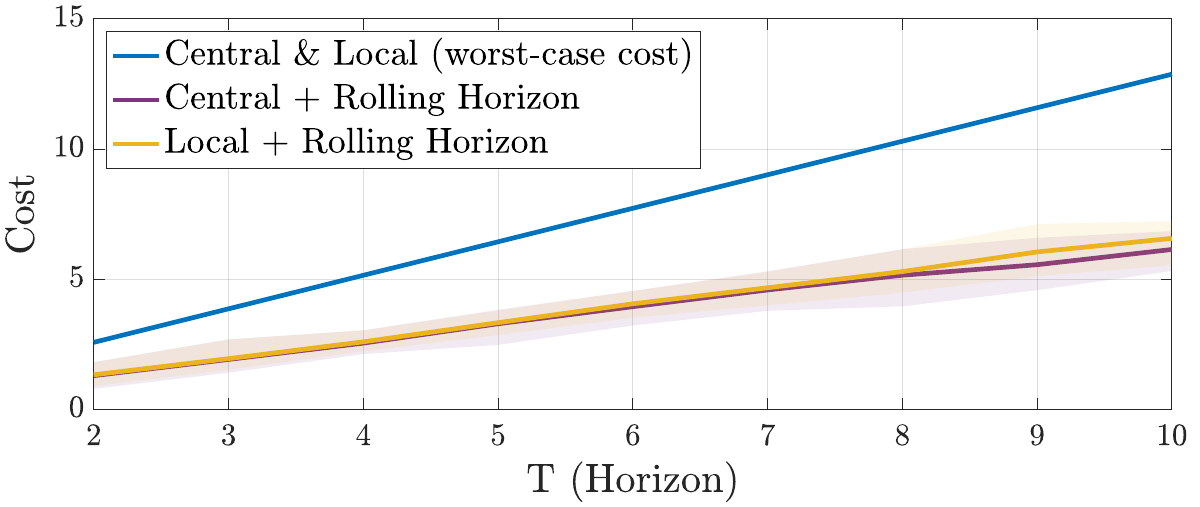}
	\includegraphics[width = 0.45\textwidth]{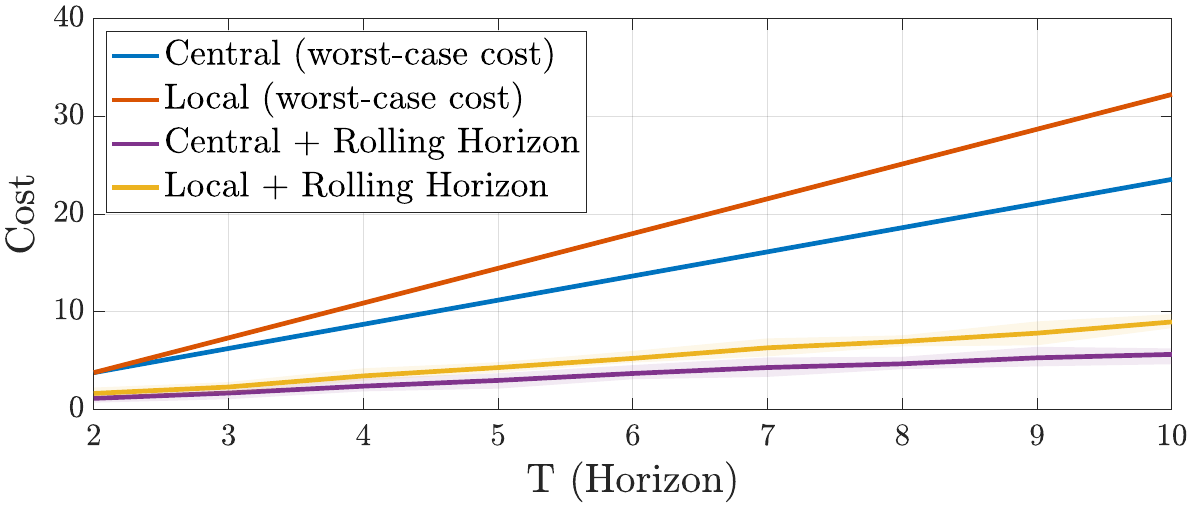} 
	\caption{Effect of the rolling horizon in a system with no delay (left) and with delay (right). The graphs report the worst-case cost computed by solving Problems \eqref{eq:QF:final} and \eqref{eq:centralized:supply:chain}, and the cost from the rolling horizon scheme. Solid lines (shaded regions) represent averages (ranges) across $10$ independent simulations.}
	\label{fig::SC_Comparison::2}
\end{figure*}

Finally, we compare the optimization time required to solve the local and centralized problems.
Figure~\ref{fig::runtime} reports the execution times required by Gurobi to solve 10 randomly generated instances of the centralized and local information exchange designs. 
Figure~\ref{fig::runtime} (left) the number of manufacturers is fixed to $N=5$, whereas in Figure~\ref{fig::runtime} (right) the horizon length is fixed to $T=5$.
In both cases, the time required to solve the local information problem~\eqref{eq:QF:final} is nearly half the time required to solve the centralized optimization problem~\eqref{eq:centralized:supply:chain}. 
This can be attributed to the nearly decoupled structure of the problem, in which only the QF bounds link dynamics and constraints. 

\begin{figure*}[t]
	\centering
	\includegraphics[width=0.45\textwidth]{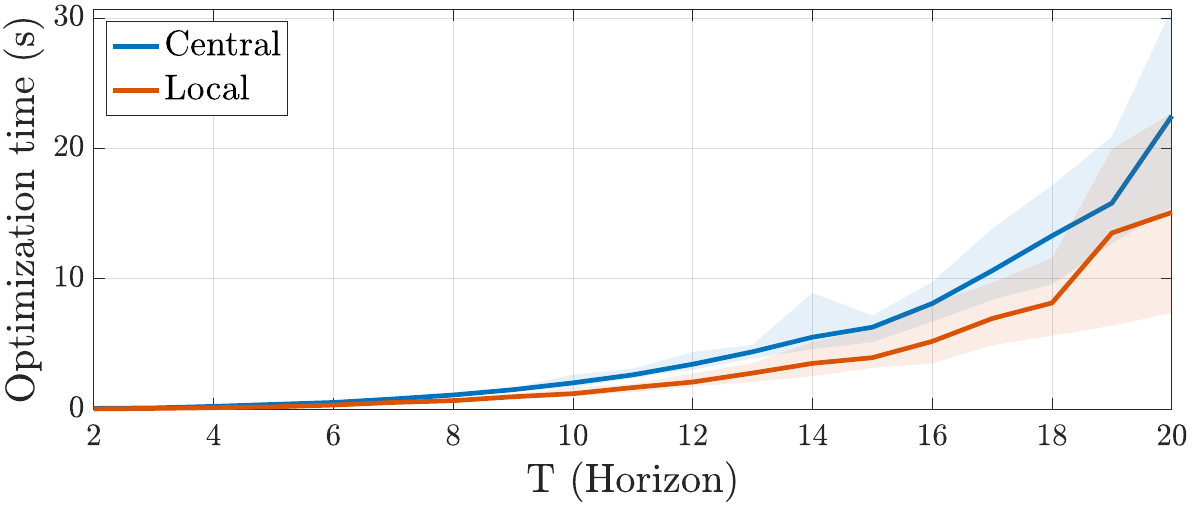}
	\includegraphics[width=0.45\textwidth]{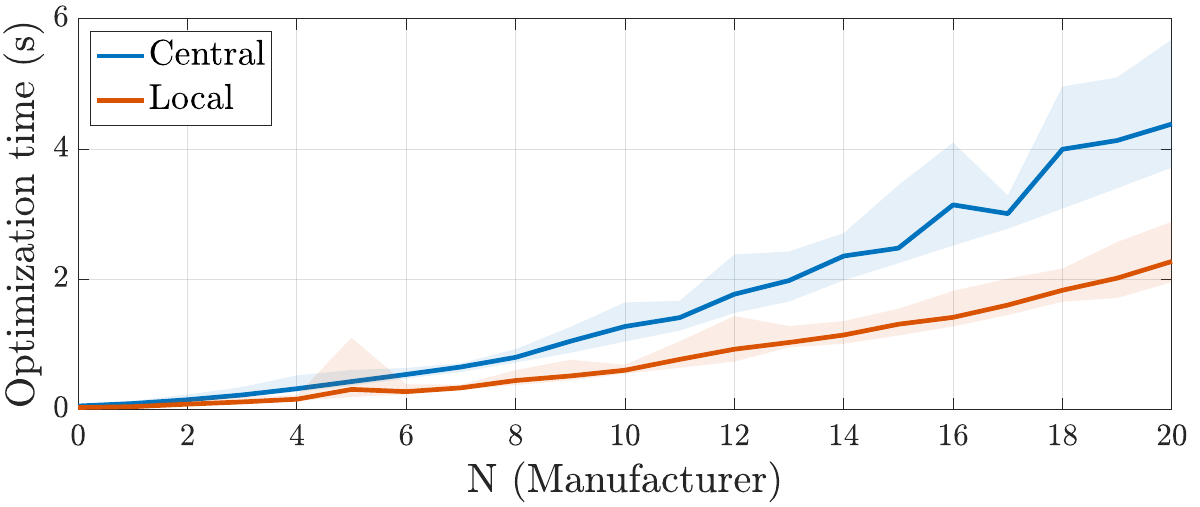}
	\caption{Comparison of the runtime of the centralized and local information problems. Solid lines (shaded regions) represent averages (ranges) across $10$ randomly generate instances.}	
	\label{fig::runtime}
\end{figure*}

\newpage
\section{Summary of major notation}\label{appedix::notation}

\noindent\textbf{Index sets:} We use $\mc M$ to denote the set of all agents (Section~\ref{sec::ProbForm}), $\mc N_i$ to denote the set of neighbors of agent $i$ (Section~\ref{sec::ProbForm}), and  $ \ol{\mc N}_i$ the set that includes agent $ i $ and all its precedent agents (Section~\ref{partiallynested}).

\noindent \textbf{Vectors concatenation:} For given vectors $ v_{i} \in \mb R^{k_i} $ with $ k_i \in \mb N $, $ i \in \mc M$, we define $ \bm v_\Mu = [v_{i}]_{i\in \mc M} = [v_{1}^\top \ldots v_{M}^\top]^\top \in \mb R^{k} $ with $ k = \sum_{i=1}^{M}k_i $ as their vector concatenation. Given time dependent vectors $ \nu_{i,t} \in \mb R^{\ell_i} $ with $ i \in \mc M $, $ t \in \mc T $ and $ \ell_i \in \mb N $, we define $ \bm \nu_{\Mu,t} = [\nu_{i,t}]_{i \in \mc M} $ as the concatenated vector at time $t$, $ \bm \nu_i^t = [\nu_{i,1}^\top \ldots \nu_{i,t}^\top]^\top $ as the history of the $ i $-th vector up to time $ t $, and $ \bm \nu_\Mu^t = [\bm \nu_i^t]_{i \in \mc M} $ as the history of the concatenated vector up to time $ t $.

\noindent\textbf{Concatenated Vectors:} The linear dynamics of the agent $i$ is written in the compact form $\bm x_i = f_i(\bm x_{\Ni}, \bm u_i, \bm \xi_i)$, where $ \bm x_i := [x_{i,t}]_{t\in \{0\} \cup \mc T} $, $ \bm u_i := [u_{i,t}]_{t \in \mc T} $, $ \bm \xi_i := [\xi_{i,t}]_{t\in \mc T} $ and $ \bm x_{\Ni} := [\bm x_{\Ni,t}]_{t \in \mc T} $.
Here, $\bm x_i$, $\bm u_i$, $\bm \xi_i$ and $\bm x_{\mathcal N_i} $ denote the state, input, exogenous uncertainty, and the state of neighbors affecting agents $i$, respectively (Section~\ref{Section::BasicInfo}). Vector $\bm \zeta_j$ denote belief states of the neighbor agent $j \in \mathcal N_i$, i.e., the dynamic of agent $i$ are affected by its belief $\bm \zeta_j\in \mc X_j$ of what values the states of agent $j$ will take  (Section~\ref{sec::DecCont}). Vector $\bm s_j\in\mc S_j$ is used in the construction of the approximation \eqref{SetApproximation} (Section~\ref{sec::SolMethod}). \\
\noindent\textbf{Optimization variables:} 
The paper analyzes four pairs of problems. A major distinction between problems is the information available to the policies and the present of sets as decision variables. Below we summarize this information. 
\begin{itemize}
	\item \textbf{Centralized information exchange (Section~\ref{centralized}):} Problem~\eqref{Centralized} has optimization variables the state feedback policies  denoted with lower case $\bm \pi_{i}(\bm x_\Mu) :=  [\pi_{i,t}(\bm x_\Mu^t)]_{t\in\mc T}$, and   Problem~\eqref{Centralizedb} has the uncertainty feedback policies   denoted by upper case $\bm \Pi_{i}(\bm \xi_\Mu) := [\Pi_{i,t}(\bm \xi_\Mu^{t-1})]_{t\in\mc T}$.
	
	\item \textbf{Partially nested information exchange (Section~\ref{partiallynested}):} Problem~\eqref{Semi-Centralized} has optimization variables the state feedback policies  denoted with lower case $\bm \phi_{i}(\bm x_\oNi) := [\phi_{i,t}(\bm x_\oNi^t)]_{t\in\mc T}$, and  Problem~\eqref{Semi-Centralizedb} has the uncertainty feedback policies denoted with upper case $\bm \Phi_{i}(\bm \xi_\oNi) := [\Phi_{i,t}(\bm \xi_\oNi^{t-1})]_{t\in\mc T}$.
	
	\item \textbf{Local information exchange (Section~\ref{sec::DecCont}):} Problem~\eqref{Decentralized} has optimization variables the state feedback policies denoted with lower case $\bm \psi_{i}(\bm x_i, \bm \zeta_\Ni) := [\psi_{i,t}(\bm x_i^t, \bm \zeta_\Ni^t)]_{t \in \mc T}$ and the state forecast sets $\mc X_i$. Besides, Problem~\eqref{Decentralizedb} has optimization variables the uncertainty feedback policies denoted with upper case $\bm \Psi_{i}(\bm \xi_i,\bm \zeta_\Ni) := [\Psi_{i,t}(\bm \xi_i^{t-1},\bm \zeta_\Ni^{t})]_{t\in\mc T}$ and the state forecast sets~$\mc X_i$.

	\item \textbf{Approximation of Problem~\eqref{Decentralizedb} (Section~\ref{sec::SolMethod}):} Problem~\eqref{DecentralizedXab_GeneralY} has optimization variables  the uncertainty feedback policies $\bm \Psi_{i}(\bm \xi_i,\bm \zeta_\Ni)$ and the state forecast sets $\mc X_i$ which are parameterized by approximation \eqref{SetApproximation} through matrices $Y_i$ and vector $z_i$ and a given set $\mc S_i$. Problem~\eqref{DecentralizedFinal_GeneralY} has optimization variables  the policy $\bm \Gamma_i(\bm \xi_i, \bm s_{\Ni}) := [\Gamma_{i,t}(\bm \xi_i^{t-1}, \bm s^t_\Ni)]_{t\in\mc T}$ and the state forecast sets $\mc X_i$ which are also  parameterized by approximation~\eqref{SetApproximation} through matrices $Y_i$ and vector $z_i$ and a given set $\mc S_i$.   
\end{itemize}

\end{document}